\newtheoremstyle{mystyle}
  {}
  {}
  {}
  {}
  {\bfseries}
  {}
  { }
  {\thmname{#1}\thmnumber{ #2}\thmnote{ (#3)}}
\numberwithin{equation}{section} 
\theoremstyle{mystyle}
\newtheorem{theorem}[equation]{Theorem}
\newtheorem{convention}[equation]{Convention}
\newtheorem{definition}[equation]{Definition}
\newtheorem{lemma}[equation]{Lemma}
\newtheorem{example}[equation]{Example}
\newtheorem{remark}[equation]{Remark}
\newtheorem{notation}[equation]{Notation}
\let\oldproof\proof
\definecolor{my-dark-gray}{gray}{0.13}
\newcommand{\m}{\mbox{-}} 
\renewcommand{\proof}{\color{my-dark-gray}\oldproof}
\newenvironment{Proof}[1][Proof]
  {\proof[#1]\leftskip=0.5cm\rightskip=0cm}
  {\endproof}
\newcommand{\Rep}{\operatorname{Rep}}
\newcommand{\Obj}{\operatorname{Obj}}
\newcommand{\Kar}{\operatorname{Kar}}
\newcommand{\Adm}{\operatorname{Adm}}
\newcommand{\Aut}{\operatorname{Aut}}
\newcommand{\End}{\operatorname{End}}
\newcommand{\id}{\operatorname{id}}
\newcommand{\cok}{\operatorname{cok}}
\newcommand{\im}{\operatorname{im}}
\title{Categorical Center of Higher Genera and 4D Factorization Homology}
\author{Jin-Cheng Guu}
\date{}
\begin{document}

\maketitle
\begin{flushright}
  Compiled Time: [\today\,\DTMcurrenttime] \qquad.
\end{flushright}

\abstract{Many quantum invariants of knots and $3$-manifolds
  (e.g. Jones polynomials) are special cases of the
  Witten-Reshetikhin-Turaev $3$D TQFT. The latter is in turn a
  part of a larger theory - the Crane-Yetter $4$D TQFT. In this
  work, we compute the Crane-Yetter theory for all (smooth and
  oriented) surfaces with at least one puncture. The results in
  general are constructed and called the categorical center of
  higher genera.}

\tableofcontents

\section*{Acknowledgement}

It is a great honor of the author to contribute to such a
beautiful theory. The author would like to express deep gratitude
to the author's advisor, Alexander Kirillov, for his guidance and
tremendous patience. Without him, the current work would not have
been possible. The author would also like to thank Ying Hong Tham
for several fruitful discussions.

\pagebreak

\section{Introduction}

Many quantum invariants of knots and $3$-manifolds (e.g. Jones
polynomials) are special cases of the Witten-Reshetikhin-Turaev
$3$D TQFT, which is in turn a part of a larger theory - the
Crane-Yetter $4$D TQFT (also called the CY model). The CY model,
first given as a state sum
\cite{crane-yetter-categorical-construction}, is believed to be a
fully-extended TQFT. In particular, its definition in
(co)dimension $2$ has been given in
\cite{fac-homo--kirillov-tham}.

In this work, we compute the Crane-Yetter theory for all (smooth
and oriented) surfaces with at least one puncture. To do that,
we first present a surface $\Sigma$ by a
combinatorial datum $\sigma$ (admissible gluing \ref{def/adm-gluing})
via a combinatorial construction ($\sigma$-construction
\ref{def/sigma-construction}) so that $\Sigma = \Sigma_{\sigma}$. We then
construct a category $Z_{\sigma}$ called the categorical center of
higher genera and show some of its basic properties
(section \label{section/algebraic-theory}). The main result
(\ref{main-statement}) proves an equivalence of finite semisimple
abelian categories $CY_{C}(\Sigma) \simeq Z_{\sigma}(C)$ for any premodular
category $C$ and any algebraically closed field $\mathbb{k}$ of
characteristic $0$.

This result generalizes almost all known results in (co)dimension
$2$ (section
\ref{section/previous-work-of-cy-in-codimension-two}). In
particular, it generalizes the Drinfeld categorical center to
higher genera.

\pagebreak

\section{Overview}

\subsection{Invariants as data representations}

On investigating a mathematical object, one starts with a
presentation, which is a description of the object from the
simpler ones. Though a presentation in principle fully describes
the object, it is yet an image distant to the essence of the
object. For example, a finite group presented by a finite set of
generators and relations is fully described but by no means well
understood. Similarly, though a space presented by a finite set
of (higher dimensional) triangles and gluing data is in principle
fully described, its topological structure can be obscured by the
complexity of the data. It is therefore natural to seek for
deeper understandings. One way often used is to represent the
presented data in different forms.

On representing data, two issues arise. First, how accurate is
the representation? Does it lose any information? And how much is
lost if it does? Second, the representation is again a
mathematical object, so it is also natural to ask how much we
understand the represented object. Is it easy to compute or
characterize? In many cases, one should balance between the two.
Indeed, a more accurate representation often teaches us less, an
extreme case being the presentation itself; a less accurate
representation to a larger degree simplifies the object and would
hopefully be more enlightening to the mortals, a simple example
being the size of a finite set: while it does not faithfully
represent the set as a mathematical object, it is useful to some
extent.

Such representations are called invariants. In the context of
topology, they are called topological invariants. The simplest
example is the notion of dimension, which assigns a given space
to a positive integer. The genus of a surface is another
classical instance among the integer-valued invariants. However,
an invariant needs not take numbers as its values. Numbers may be
easier to understand, but they forget too much information. Being
an topological invariant, the homology $H(-;\mathbb{Q})$ takes
vector spaces as values. Certainly, they do tell us more than the
dimension does. More sophisticated examples can take even more
complicated algebraic objects (groups, algebras, Hopf algebras..
etc) as values. For instance, a milestone in algebraic topology
is the celebrated theorem of Mandell.

\begin{theorem} \cite{mandell/cochains-and-homotopy-type}
  Finite type nilpotent spaces $X$ and $Y$ are weakly equivalent
  if and only if the $E^{\infty}$-algebras $C^{\star}(X)$ and $C^{\star}(Y)$
  are quasi-isomorphic.
\end{theorem}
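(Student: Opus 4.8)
The plan is to prove the nontrivial (``if'') direction by \emph{reconstructing} the weak homotopy type of a nilpotent finite-type space from the quasi-isomorphism type of its $E^\infty$ cochain algebra, one arithmetic piece at a time, and then reassembling via the Hasse fracture square. The (``only if'') direction requires no argument: a weak equivalence $f\colon X\to Y$ induces a quasi-isomorphism $f^{\star}\colon C^{\star}(Y)\to C^{\star}(X)$, and $f^{\star}$ is a morphism of $E^\infty$-algebras for the natural (e.g.\ Eilenberg--Zilber) $E^\infty$ structure, so $C^{\star}(X)$ and $C^{\star}(Y)$ are connected by an $E^\infty$ quasi-isomorphism.

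\textbf{Step 1 (the $p$-adic part).} Fix a prime $p$ and work over $\overline{\mathbb{F}}_p$. Put a model structure on $E^\infty$-$\overline{\mathbb{F}}_p$-algebras and consider the contravariant adjunction between spaces and $E^\infty$-$\overline{\mathbb{F}}_p$-algebras whose right adjoint is $X\mapsto C^{\star}(X;\overline{\mathbb{F}}_p)$ and whose left adjoint sends an algebra $A$ to its \emph{space of points} $\mathrm{Map}_{E^\infty}(A,\overline{\mathbb{F}}_p)$. The goal is to prove that for $X$ nilpotent of finite type the derived unit
\[
  X^{\wedge}_{p}\;\longrightarrow\;\mathrm{Map}_{E^\infty}\!\big(C^{\star}(X;\overline{\mathbb{F}}_p),\,\overline{\mathbb{F}}_p\big)
\]
is a weak equivalence; this already shows $C^{\star}(-;\overline{\mathbb{F}}_p)$ is fully faithful on $p$-complete nilpotent finite-type spaces. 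I would establish it by: (a) computing $C^{\star}(K(\mathbb{Z}/p^{k},n);\overline{\mathbb{F}}_p)$ explicitly as a (co)free $E^\infty$-$\overline{\mathbb{F}}_p$-algebra on a single cell and checking directly that its space of points is $K(\mathbb{Z}/p^{k},n)^{\wedge}_{p}$ --- this is the hard input, essentially a repackaging of the Sullivan fixed-point / Miller-type vanishing phenomenon, requiring tight control of the interplay between the Steenrod action and the multiplicative (Dyer--Lashof-type) power operations carried by the $E^\infty$ structure; (b) showing $C^{\star}(-;\overline{\mathbb{F}}_p)$ sends a principal fibration with fiber $K(\pi,n)$, presented as the homotopy pullback of $B\to K(\pi,n+1)\leftarrow \ast$, to the homotopy pushout $C^{\star}(B;\overline{\mathbb{F}}_p)\otimes^{\mathbb{L}}_{C^{\star}(K(\pi,n+1);\overline{\mathbb{F}}_p)}\overline{\mathbb{F}}_p$ of $E^\infty$-algebras --- an Eilenberg--Moore argument whose convergence over $\overline{\mathbb{F}}_p$ uses finite type and nilpotence; and (c) inducting up a principal Postnikov tower, using that the space-of-points functor carries those pushouts back to the corresponding homotopy pullbacks of spaces, together with a $\varprojlim^{1}$/convergence estimate to pass to the inverse limit.

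\textbf{The rational part and the assembly.} Over a field of characteristic $0$ the $E^\infty$ operad is formal, so the $E^\infty$-$\mathbb{Q}$-algebra $C^{\star}(X;\mathbb{Q})$ carries, up to quasi-isomorphism, the same information as a commutative DG $\mathbb{Q}$-algebra, and Sullivan's theory reconstructs $X_{\mathbb{Q}}$ from it for $X$ nilpotent of finite type. Now the arithmetic fracture square exhibits such an $X$ as the homotopy pullback of $\prod_{p}X^{\wedge}_{p}\to\big(\prod_{p}X^{\wedge}_{p}\big)_{\mathbb{Q}}\leftarrow X_{\mathbb{Q}}$. A quasi-isomorphism $C^{\star}(X)\simeq C^{\star}(Y)$ of integral $E^\infty$-algebras base-changes along $-\otimes^{\mathbb{L}}_{\mathbb{Z}}\overline{\mathbb{F}}_p$ (using universal coefficients) to an $E^\infty$-$\overline{\mathbb{F}}_p$-quasi-isomorphism, hence $X^{\wedge}_{p}\simeq Y^{\wedge}_{p}$ by Step 1, and along $-\otimes_{\mathbb{Z}}\mathbb{Q}$ (using flatness and finite type) to a $\mathbb{Q}$-quasi-isomorphism, hence $X_{\mathbb{Q}}\simeq Y_{\mathbb{Q}}$. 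It remains to choose these equivalences \emph{compatibly} over $\big(\prod_{p}X^{\wedge}_{p}\big)_{\mathbb{Q}}$; this compatibility is not an extra choice but is forced by the natural $E^\infty$ maps $C^{\star}(X;\mathbb{Z})\to C^{\star}(X;\mathbb{Q})$ and $C^{\star}(X;\mathbb{Z})\to\prod_{p}C^{\star}(X;\overline{\mathbb{F}}_p)$ and the homotopy coherence of the cochain functor, which pin down the comparison maps of the fracture square from the integral algebra alone. Passing to homotopy pullbacks yields $X\simeq Y$.

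\textbf{Main obstacle.} The crux is Step 1(a)--(b): identifying the $E^\infty$ mod-$p$ cochains of Eilenberg--MacLane spaces as (co)free $E^\infty$-algebras and computing their derived spaces of points. This is genuinely deep --- it lives at the level of the Sullivan conjecture --- and demands careful control of the Steenrod operations, the $E^\infty$ power operations, and the $p$-adic topology on the relevant mapping spaces. Everything past that point (the Postnikov induction, the rational side, the fracture-square assembly) is comparatively formal homological algebra and spectral-sequence bookkeeping.
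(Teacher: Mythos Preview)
The paper does not prove this statement at all: it is quoted in the introduction purely as a motivating example of an invariant taking values in higher algebraic objects, with a bare citation to Mandell's paper \cite{mandell/cochains-and-homotopy-type} and no accompanying argument. There is therefore no ``paper's own proof'' to compare your proposal against.

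That said, your outline is a faithful sketch of Mandell's actual strategy. The $p$-adic reconstruction via the derived unit of the cochains/points adjunction over $\overline{\mathbb{F}}_p$, the identification of the $E^\infty$ cochains of Eilenberg--MacLane spaces as cofree, the Eilenberg--Moore step for principal fibrations, and the Postnikov induction are exactly the architecture of Mandell's $p$-adic paper; the rational part via Sullivan and the assembly via the arithmetic fracture square is how the integral statement is obtained in the follow-up. Your identification of Step~1(a) as the deep input is correct. One point to be careful about in the assembly step: the claim that the compatibility across the fracture square is ``forced'' by the natural $E^\infty$ maps out of the integral cochains is where the real work of the integral theorem lies, and in Mandell's treatment this requires a genuinely finer analysis (of finite-adele-type cochains) rather than being purely formal; your sketch underweights this.
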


Such leap of thought from numbers to higher objects is seminal.
Not only does it enable us to represent the objects more
accurately, perhaps more importantly, it allows us to represent
the relations among objects of interest by (higher) functors. \\

\subsection{TQFTs as higher invariants}

\begin{convention}[manifold, field]
  Throughout this paper, by a manifold of dimension $n$ we mean
  an oriented smooth manifold without boundary of real-dimension
  $n$; we also work over a fixed field $\mathbb{k}$ that is
  algebraically closed and with characteristic $0$.
\end{convention}

Another example of a invariant that takes vector spaces as
values, originally motivated by theoretical physics, is called a
topological quantum field theory (TQFT) à la Michael Atiyah
\cite{atiyah/tqft}. Roughly put, a TQFT, like a homology, assigns
vector spaces to some manifolds. It also, in a slightly different
manner, assigns in a compatible way a linear transformation to
the manifolds which are of one-dimension higher and bridge the
lower manifolds. Curiously, this gives a number-valued invariant
for closed manifolds: closed manifolds ``bridge'' the empty
manifolds, to which the trivial vector space is assigned, so the
linear transformations assigned to the closed manifolds are numbers.

This generalization from numbers to vector spaces does not stop
here. By viewing vector spaces as objects in a category, one can
bring this process to assigning objects in higher categories. An
instance of such is called an extended topological quantum field
theory (eTQFT) \cite{lurie/classify-eTQFT}.

\begin{example}[Witten-Reshetikhin-Turaev model]\label{example/wrt-model}
  Among manifolds, the one-dimensional and two-dimensional ones
  are fully classified. Though a presentation for the
  three-dimensional case is provided by knots and surgery theory
  (cf \cite[chap.12]{lickorish-knot-theory} and
  \cite{kirby1978}), it is based on the classification of knots,
  which is itself complicated and in fact still an ongoing
  research field. Therefore, invariants of $3$-manifolds are of
  interest.

  Around the $80$s, initiated by V Jones with his celebrated
  Jones polynomials, a new type of invariant was born in the
  context of mathematical-physics. For example, one can construct
  the invariants from von Neumann algebras, quantum groups, and
  rational conformal theories
  \cite[sec.1]{crane-yetter-categorical-construction}. It is
  natural to ask if they can be unified. The answer is positive
  in that all of them are special cases of the
  Witten-Reshetikhin-Turaev (WRT) model, whose input algebraic
  data is called a modular tensor category \cite{MTC-321tqft}).

  As an extended TQFT, WRT model assigns a number to each closed
  $3$-manifold, a vector space to each closed $2$-manifold, and a
  linear category to each closed $1$-manifold. However, it does
  not go on and assign a linear $2$-category in the $0$-th
  dimension. See \ref{remark/why-wrt-is-not-fully-extended} for
  more discussion.
\end{example}

\begin{example}[Turaev-Viro model]\label{example/tv-model}
  The Turaev-Viro (TV) model is first given as an invariant for
  $3$-manifolds in the form of a state sum
  \cite{turaev-viro-model}. It was clear then that the TV model
  gives a $3$-dimensional TQFT.

  Later, it is proven to be a fully extended TQFT in
  \cite{kirillov-balsam/turaev-viro-I}. Namely, it compatibly
  assigns a number to each closed $3$-manifold, a vector space to
  each closed $2$-manifold, a linear category to each closed
  $1$-manifold, and a linear $2$-category to each $0$-manifold
  (i.e. the point). It was also shown that
  $$TV_{C}(M) \simeq WRT_{Z(C)}(M)$$
  where $M$ is a closed manifold of dimension $2$ or $3$, $C$ is
  a spherical category and $Z(C)$ is its Drinfeld categorical
  center (\cite{balsam/turaev-viro-II} and
  \cite{balsam/turaev-viro-III}).
\end{example}

\begin{example}[Crane-Yetter model]\label{example/cy-model}
  The Crane-Yetter model is a $4$-dimensional analogue of the TV
  model. Its input algebraic data is a premodular category.
  It is the main topic of this paper, and will be treated
  thoroughly in its own section
  (\ref{section/cy-as-an-extended-tqft}).
\end{example}

\begin{theorem}[$\partial CY = WRT$]\label{thm/wrt-is-the-boundary-theory-of-cy}
  The WRT model is the boundary theory of CY, while the input
  algebraic data is a modular category. More precisely,
  for a modular category $C$ and a
  $4$-dimensional manifold $W$ possibly with boundary,
  $$CY_{C}(W) = \kappa^{\sigma(W)} WRT_{C}(\partial W),$$
  where $\sigma$ denotes the signature and $\kappa$ denotes a constant
  based on the input algebraic data (the modular category given
  in \ref{example/premodular-category--quantum-group}). This
  extends to manifolds with colored graphs \cite[Theorem
  2]{barrett/observables-in-tv-and-cy}.

  For a $3$-dimensional manifold $M$ possibly with boundary, the
  vector spaces $CY_{C}(M)$ (cf \ref{thm/cy-trivializes-over-mtc})
  and $WRT_{C}(\partial M)$ are widely believed to be equivalent. However,
  to our best knowledge a rigorous proof has not been provided.
\end{theorem}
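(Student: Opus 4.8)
The statement has two halves of rather different status. The equality $CY_C(W)=\kappa^{\sigma(W)}WRT_C(\partial W)$ for $4$-manifolds is a genuine theorem, and the plan below is to reduce both sides to a single Kirby-colored graph evaluation. The second assertion -- that $CY_C(M)$ and $WRT_C(\partial M)$ agree as vector spaces for a $3$-manifold $M$ -- is only a conjecture, which I would record as such; see the last paragraph. For the $4$-dimensional equality I would begin by fixing a handle decomposition of $W$ and simplifying it via the Kirby calculus for $4$-manifolds with boundary: turning the decomposition upside down lets one absorb handles of index $3$ and $4$, and each $1$-handle can be traded for a dotted $2$-handle, so one reduces to the case where $W$ consists of a single $0$-handle together with $2$-handles attached along a framed link $L \subset S^3$. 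For such $W$ the boundary $\partial W$ is exactly the result of Dehn surgery on $L$, and $\sigma(W)$ equals the signature of the linking matrix of $L$.

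The next step is to rewrite $CY_C(W)$ skein-theoretically. The essential input is the identification of the Crane-Yetter state sum with the ``chain-mail'' evaluation associated to a handle decomposition -- the $4$-dimensional analogue of Roberts' theorem for Turaev-Viro, in the form recorded in Barrett's work on observables (already cited in the statement). Under this identification, $CY_C(W)$ is computed by decorating the belt spheres of the $1$- and $2$-handles with the Kirby color $\Omega = \sum_i \dim(i)\,i$ (the sum over simple objects of $C$) and evaluating the resulting colored link in the Reshetikhin-Turaev skein category; for the presentation above this is the Kirby-colored link invariant $\langle L_\Omega \rangle_C$, up to a fixed power of the global dimension $\mathcal D$. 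It is here that modularity of $C$ enters in an essential way: invertibility of the $S$-matrix is precisely what makes this evaluation invariant under handle slides (through the encircling lemma that ``kills'' a strand passed through $\Omega$), which is why the theorem needs a modular, not merely premodular, input.

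I would then compare with the surgery definition of the WRT invariant, $WRT_C(\partial W) = \mathcal D^{-1-b_0(L)}\,\kappa^{-\sigma(L)}\,\langle L_\Omega \rangle_C$, taken in the normalization matching the one used for $CY$, where $\kappa = \Delta_+/\mathcal D$ and $\Delta_\pm = \sum_i \theta_i^{\pm1}\dim(i)^2$. The colored-graph evaluations $\langle L_\Omega \rangle_C$ and the powers of $\mathcal D$ coincide with those produced in the previous step, so the only discrepancy between $CY_C(W)$ and $WRT_C(\partial W)$ is the factor $\kappa^{-\sigma(L)} = \kappa^{-\sigma(W)}$ coming from the signature normalization built into WRT; rearranging gives $CY_C(W) = \kappa^{\sigma(W)} WRT_C(\partial W)$. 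Inserting a colored ribbon graph into $W$ merely adds that graph to the evaluation on both sides and does not affect the signature bookkeeping, which is \cite[Theorem~2]{barrett/observables-in-tv-and-cy}; and taking $\partial W = \varnothing$ recovers the classical Crane-Yetter-Roberts identity $CY_C(W) = \kappa^{\sigma(W)}$.

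The main obstacle is the second step: converting the purely simplicial definition of $CY$ into the handle-decomposition / skein evaluation with every $\mathcal D$-normalization and framing anomaly tracked precisely, and verifying independence of the chosen handle decomposition through Cerf theory and the Kirby moves. Once this dictionary is in place, the remainder is bookkeeping with the $S$- and $T$-matrices. As for the $3$-manifold statement, a proof of $CY_C(M) \simeq WRT_C(\partial M)$ would require a gluing or excision argument for the codimension-$1$ part of the extended CY theory, which to our knowledge is not yet available in the literature; we therefore state it only as the widely held expectation and do not attempt it here.
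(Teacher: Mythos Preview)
The paper does not give its own proof of this theorem: it is stated in the overview as a known background result, with the $4$-dimensional equality attributed to \cite[Theorem~2]{barrett/observables-in-tv-and-cy} and the $3$-dimensional statement explicitly flagged as a conjecture (``to our best knowledge a rigorous proof has not been provided''). So there is nothing in the paper to compare your proposal against.

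That said, your outline is essentially the standard argument from the cited literature. The reduction to a $0$-handle plus $2$-handles via Kirby moves, the identification of the Crane--Yetter state sum with the chain-mail evaluation (Roberts' technique as extended by Barrett), and the comparison with the surgery formula for $WRT_C(\partial W)$ to isolate the $\kappa^{\sigma(W)}$ discrepancy --- this is exactly how the result is proved in \cite{barrett/observables-in-tv-and-cy} and in Roberts' earlier work. Your remark that the second step (state sum $=$ chain-mail, with all normalizations tracked) is the genuine content is accurate, and your treatment of the $3$-manifold assertion as conjectural matches the paper's own stance. If anything, you could tighten the handle-decomposition step: the claim that one can always absorb $3$- and $4$-handles by ``turning upside down'' needs $W$ to be connected and requires some care when $\partial W \neq \varnothing$, but this is handled in the sources and does not affect the overall architecture.
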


\begin{remark}\label{remark/why-wrt-is-not-fully-extended}
  In \ref{example/wrt-model} we mentioned that the WRT model does
  not extend to the $0$-th dimension. There are two explanations
  for this fact.
  \begin{enumerate}
    \item From the Turaev-Viro model perspective
    \cite{balsam/turaev-viro-II}, for WRT to extend to a
    point, one needs the input modular category $C$ to be
    the Drinfeld center of a spherical fusion category $D$.
    This is not always the case.
    \item From the Crane-Yetter (CY) model perspective, the WRT
          model is a boundary theory of CY
          (\ref{thm/wrt-is-the-boundary-theory-of-cy}). However,
          a single point is not the boundary of any $1$-manifold:
          one needs at least two points.
  \end{enumerate}
  From the second point of view of \ref{example/wrt-model}, one
  sees that the WRT model, though successful and fruitful, is a
  part of larger theory - the Crane-Yetter model, which is the
  main TQFT we will focus on in this work.
\end{remark}

\subsection{$CY$ as an extended TQFT}\label{section/cy-as-an-extended-tqft}

In this paper, we focus on a specific extended TQFT, namely, the
Crane-Yetter theory. We provide some historical context, display
some known results, and address our main result.

\subsubsection{The Crane-Yetter theory $CY$}

Originated in \cite{crane-yetter-categorical-construction}, the
Crane-Yetter model was first defined as a state-sum. In
particular, for a triangulated $4$-manifold $M$ and a modular
tensor category $C$, one define
$$CY(M) = \Sigma_{c} D^{(n_{0} - n_{1})} \Pi_{\sigma} \dim c(\sigma) \Pi_{t} \dim c(t) \Pi_{\xi} \underline{15j}(c,\xi),$$
where $c$ runs through all ``colorings'', $n_{i}$ is the number
of simplices of dimension $i$ in the triangulation, $\sigma$ runs
through all triangles, $t$ runs through all tetrahedra, $\xi$ runs
through all $4$-simplices, and $\underline{15j}$ denotes the so
called $15j$-symbols. The upshot is that the sum is independent
of the triangularization, therefore defines an invariant of
$4$-manifolds. This holds even when the $C$ is a premodular category.

Later in \cite{crane-kauffman-yetter/crane-yetter-state-sum}, it
was known that the this seemingly complicated sum can be
expressed in terms of the Euler characteristic and the signature,
both being old and well-known topological invariants. While this
provides a combinatorial formula for the signature of $4$-folds,
it also means that the CY model with the input data being a
modular tensor category $C$ somehow trivializes. Recall that the
WRT model is the boundary theory of the CY model
(\ref{thm/wrt-is-the-boundary-theory-of-cy}). This provides a
hint for why the CY model trivializes - the input algebraic data,
namely the modular tensor categories, are too good for
$4$-manifolds. Therefore, other types of algebraic data should be
considered. For example, in this paper we focus on the premodular
categories.

On the other hand, as the TV model, the CY model is also a fully
extended TQFT. In particular, it gives a number to each closed
$4$-manifold, a vector space to each closed $3$-manifold, a
linear category to each closed $2$-manifold, a linear
$2$-category to each closed $1$-manifold (the circle), and
finally a linear $3$-category to each closed $0$-manifold (the
point). In this work, we will focus on dimension $2$, in which
the CY model takes linear categories as values.

\subsubsection{Previous work of $CY$ in (co)dimension two} \label{section/previous-work-of-cy-in-codimension-two}

Recent developments of the CY model include its Hamiltonian
formulation and its higher-codimensional aspect. The former is
called the Walker-Wang model (cf \cite{walker-wang-model} and
\cite{mathoverflow/194633}). However, while the relation between
CY and the Walker-Wang model are commonly believed, to the best
knowledge of the author, it has not been explicitly proved.

The later, on the other hand, is currently studied by A.
Kirillov's school starting around 2018. In particular, Tham and
Kirillov correctly defined CY model in (co)dimension $2$,
computed some examples, and proved the excision property, which
connects the CY model to factorization homology. Similar work can
be found in the context of skein modules (cf
\cite{cooke2019excision}).

In the rest of this section, we recall the results from Kirillov
and Tham. The main statement of this paper will follow in the
next section.

\begin{center}
  \begin{tabular}{ | c | c | c | c | c | c | }
    \hline
    $\Sigma$ & Disk & Cylinder & Sphere & 1-punctured torus & General \\ \hline
    $CY_{C}(\Sigma)$ & $C$ & $Z(C)$ & $Mu(C)$ & $Z^{el}(C)$ & $Z_{\sigma}(C)$ \\ \hline
        & & Drinfeld center & Muger center & Elliptic center &
                                                               \begin{tabular}{@{}c@{}}Categorical center \\ of higher genera \end{tabular} \\ \hline
  \end{tabular}
\end{center}

\begin{theorem}\cite[Section 5]{fac-homo--kirillov-tham}\label{thm/cy-of-a-disk}
  Let $C$ be a premodular category, and $\Sigma = D^{2}$ be the open
  disk. Then as abelian categories
  $$CY_{\Sigma}(C) \simeq C.$$
\end{theorem}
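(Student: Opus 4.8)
The plan is to work with the skein-theoretic description of the Crane--Yetter theory in codimension two from \cite{fac-homo--kirillov-tham}, in which $CY_C(\Sigma)$ is the idempotent completion $\Kar(\mathcal{G}_C(\Sigma))$ of the category $\mathcal{G}_C(\Sigma)$ whose objects are finite configurations of framed points in $\Sigma$ labelled by $\Obj(C)$ and whose morphisms are $\mathbb{k}$-linear combinations of isotopy classes of $C$-coloured ribbon graphs in $\Sigma\times[0,1]$ bounding the prescribed configurations, taken modulo the local relations generated by the ribbon structure of $C$. (Under the excision/factorization-homology description also recalled in the text, the statement is exactly the normalization axiom $\int_{D^2}C\simeq C$, so the two routes agree.) Since $D^2$ is contractible, the strategy is to produce mutually quasi-inverse $\mathbb{k}$-linear functors $F\colon C\to CY_C(D^2)$ and $G\colon CY_C(D^2)\to C$ and then observe that the resulting equivalence is automatically one of abelian categories.

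First I would define $F$ by sending $V\in C$ to the one-point configuration at the centre of the disk coloured by $V$, and a morphism $f$ to the vertical strand carrying a single coupon labelled $f$; well-definedness and linearity are immediate. For $G$, I would use that $D^2$ deformation-retracts to a point: given a configuration, isotope all coloured points into a small embedded sub-disk and read the connecting graph, together with the monoidal structure, as a single object of $C$ (its \emph{total evaluation}); on morphisms, a graph in $D^2\times[0,1]\cong D^3$ with fixed boundary evaluates to a morphism of $C$ by the Reshetikhin--Turaev graphical calculus for the ribbon category $C$. One checks $G\circ F\cong\id_C$ at once, a single coloured point evaluating to itself. The content is $F\circ G\cong\id_{CY_C(D^2)}$: every configuration must be canonically isomorphic, inside the cylinder category, to its total evaluation. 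The isomorphism is the ``fusion'' graph that slides and merges the points into one, its inverse is the reversed fusion graph, and that the two compositions are the respective identities is precisely an isotopy of ribbon graphs inside the ball; thus the two-sided inverse already exists in $\mathcal{G}_C(D^2)$, before any completion.

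The main obstacle is controlling the Karoubi envelope: a priori $\Kar(\mathcal{G}_C(D^2))$ could be strictly larger than the image of $F$. Here I would invoke that $C$ is premodular, hence finite semisimple abelian and in particular idempotent-complete: any idempotent endomorphism of a configuration, transported through $G$, splits in $C$, and the splitting object --- placed as a single coloured point and transported back by the fusion isomorphisms above --- splits the original idempotent in $\mathcal{G}_C(D^2)$ itself. Hence $F$ is essentially surjective onto $\mathcal{G}_C(D^2)$ and onto its Karoubi envelope, while full faithfulness of $F$ is the statement that two parallel $C$-coloured graphs in the $3$-ball with the same boundary and the same evaluation in $C$ are related by the local relations, which is built into the skein relations generated by a ribbon category. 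Finally, a $\mathbb{k}$-linear, fully faithful, essentially surjective functor out of the abelian category $C$ exhibits its target as an abelian category equivalent to $C$; this is the asserted equivalence $CY_C(D^2)\simeq C$ of abelian categories.
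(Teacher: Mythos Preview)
Your proposal is correct and follows essentially the same route as the paper. In the paper this statement is cited from \cite{fac-homo--kirillov-tham}, but it is also the $n=0$ case of the main theorem, whose proof proceeds exactly as you outline: reduce $cy_{\Sigma}(C)$ to the equivalent full subcategory $ho.cy_{\Sigma}(C)$ of single coloured points at the centre by isotoping and fusing configurations (your ``total evaluation''), observe that morphisms between such points are string nets in the ball which evaluate under Reshetikhin--Turaev to morphisms in $C$, and then pass to the Karoubi envelope. The only cosmetic difference is bookkeeping for the Karoubi step: you argue directly that idempotents already split in $\mathcal{G}_C(D^2)$ because $C$ is idempotent-complete, whereas the paper packages this as ``an additive functor into an abelian category extends to the Karoubi completion'' (Lemma~\ref{lemma/additive-functor-to-abelian-cat-lifts-to-karoubi}); both amount to the same observation.
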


\begin{theorem}[Drinfeld center]\cite[Example 8.2]{fac-homo--kirillov-tham}\label{thm/drinfeld-center-and-cylinder}
  Let $C$ be a premodular category, $Z(C)$ its Drinfeld
  categorical center, and $\Sigma = S^{1} \times I = S^{1} \times (0,1)$ be
  the cylinder. Then as abelian categories
  $$CY_{\Sigma}(C) \simeq Z(C).$$
  Moreover, as multifusion categories, the topological nature of
  $\Sigma$ induces a so called reduced tensor product $\overline{\otimes}$
  (\cite{reduced--tham}, \cite{wasserman2019reduced}) for $Z(C)$.
  Indeed, stacking two cylinders together produces another
  cylinder $S^{1} \times (0,2) \simeq S^{1} \times I$.
\end{theorem}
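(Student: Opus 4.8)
The plan is to realize the cylinder by a single self-gluing of a disk and then read off the resulting relative tensor product as the Drinfeld center. Present $\Sigma = S^{1}\times I$ as the quotient of the square $Q=I\times I$ — which is an open disk — obtained by gluing the edge $e_{0}=\{0\}\times I$ to the edge $e_{1}=\{1\}\times I$ along the identity of $I$. Since $CY_{C}$ is a factorization-homology-type invariant in codimension two, it sends such a self-gluing of a surface along two of its boundary intervals to the coend (``$S^{1}$-trace'') of $CY_{C}(Q)$ taken against the two $C$-module structures carried by collar neighbourhoods of $e_{0}$ and $e_{1}$; this is the excision property of \cite{fac-homo--kirillov-tham}. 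By the disk theorem \ref{thm/cy-of-a-disk} we have $CY_{C}(Q)\simeq C$, so everything reduces to identifying those two $C$-actions on $C$ and taking their relative tensor product.

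The two actions are pinned down by functoriality of $CY_{C}$ under orientation-preserving embeddings together with \ref{thm/cy-of-a-disk}: a collar of $e_{0}$ is a thin strip sitting in $Q$ to the left, and ``merging a strip into the ambient square'' is sent by $CY_{C}$ to the tensor product $X\otimes(-)$, while a collar of $e_{1}$ is a strip to the right, giving $(-)\otimes X$. After the gluing the induced orientations on $e_{0}$ and $e_{1}$ are opposite, so forming the coend glues the left action to the right one. An object of $CY_{C}(\Sigma)$ is therefore an object $M\in C$ equipped with an isomorphism $X\otimes M\xrightarrow{\ \sim\ }M\otimes X$ natural in $X$, which — by the very definition of the relative tensor product, equivalently by the coherence inherited from the braiding of $C$ — must be multiplicative in $X$, i.e. satisfy the hexagon over $\otimes$. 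That is exactly a half-braiding, and a morphism is a morphism of $C$ intertwining the half-braidings, with no further relations. Hence $CY_{C}(\Sigma)\simeq Z(C)$ as abelian categories, and finiteness and semisimplicity are inherited because the relative tensor product of finite semisimple module categories over a fusion category is again of that type.

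For the monoidal statement, use that stacking two cylinders end to end is again a cylinder: the orientation-preserving embedding $\Sigma\sqcup\Sigma\hookrightarrow\Sigma$ that places two sub-cylinders inside a longer one induces, by monoidality of $CY_{C}$ under disjoint union and by functoriality, a bifunctor $CY_{C}(\Sigma)\boxtimes CY_{C}(\Sigma)\to CY_{C}(\Sigma)$. Transporting along the equivalence with $Z(C)$ and unwinding the collars identifies this bifunctor with the reduced tensor product $\overline{\otimes}$ of \cite{reduced--tham}, \cite{wasserman2019reduced} — concretely, a relative tensor product of $M$ and $N$ taken over $C$ via the forgetful functor $Z(C)\to C$, the half-braidings supplying the balancing data. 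The associativity and unit constraints come from the evident isotopies between re-bracketed stackings and from collapsing a degenerate ``thin'' cylinder; the unit object is $CY_{C}$ of that thin cylinder, which is the handle object $\bigoplus_{i}\,i\otimes i^{\ast}$ of $Z(C)$ with its canonical half-braiding. Since this object is in general not simple, $(Z(C),\overline{\otimes})$ is multifusion rather than fusion, as asserted.

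The substantive point — and the only real obstacle — is the middle paragraph: one must follow the orientations through the self-gluing carefully enough to confirm that the two $C$-module structures on $CY_{C}(Q)$ genuinely differ by the braiding (were they literally equal one would recover only $C$, which is what happens for the annulus of a plain associative algebra), and then check that the descent datum produced by the coend is precisely a half-braiding with no leftover constraints. Both hinge on making the excision isomorphisms of \cite{fac-homo--kirillov-tham} explicit on collar neighbourhoods; granting that, the coend computation, the identification with $Z(C)$, and the matching of the induced product with $\overline{\otimes}$ are routine unwindings of the definitions.
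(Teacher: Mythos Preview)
The paper does not prove this statement directly; it is quoted from \cite{fac-homo--kirillov-tham}. It does, however, recover it as the $n=1$ case of the main theorem \ref{main-statement}: the cylinder is $\Sigma_{(12)}$, and for $\sigma=(12)$ the category $Z_{\sigma}(C)$ is by definition the Drinfeld center (a single half-braiding, no \texttt{(:comm)} relations to impose). The proof of \ref{main-statement} is by explicit string-net manipulation --- one passes to the full subcategory $ho.cy_{\Sigma_{\sigma}}(C)$ on configurations supported at the centre of the disk, and then writes down mutually inverse functors $J,G$ between its Karoubi envelope and $Z_{\sigma}(C)$ using the sliding lemma and graphical calculus. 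No excision is invoked.

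Your route is genuinely different: you appeal to excision for a self-gluing and compute the annulus as the $S^{1}$-trace of the disk value. That is a legitimate high-level strategy, and is closer in spirit to the factorization-homology arguments in the cited reference than to anything in this paper. What the paper's hands-on approach buys is that the equivalence, the projector $\pi_{\gamma}$, and the half-braidings are all visible as concrete string nets; what your approach buys is that the monoidal structure from stacking cylinders is automatic rather than something to be checked separately.

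There is, however, a real gap in your middle paragraph. You say the self-gluing produces a \emph{coend}, and then in the next breath describe its objects as ``an object $M\in C$ equipped with an isomorphism $X\otimes M\xrightarrow{\sim}M\otimes X$ natural in $X$'' satisfying the hexagon. That is the description of an \emph{end} (the centre), not of a coend (the cocentre, i.e.\ Hochschild homology). Objects of a colimit are not objects-with-extra-structure; they are equivalence classes. That the trace of $C$ over itself nonetheless agrees with $Z(C)$ is a genuine theorem which uses semisimplicity (or, in the module-category language, that the balanced tensor product over a fusion category can be computed as a category of descent data), and it is not a ``routine unwinding of the definitions'' as your final paragraph suggests. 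You also silently use a self-gluing form of excision, whereas \ref{thm/excision-principle} as stated here is only for unions $\Sigma_{1}\cup\Sigma_{2}$; the trace version needs to be cited or argued separately.
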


\noindent Notice that the reduced tensor product is in general
different from the usual tensor product of the Drinfeld center.

\begin{theorem}[Excision principle]\cite[Theorem
  2.3]{fac-homo--kirillov-tham}\label{thm/excision-principle}

  Denote the Deligne tensor product by $\boxtimes$. Let $\Sigma$ be an open
  surface with $n$ punctures. Then the category $CY_{\Sigma}(C)$ has a
  structure of module category over $CY_{\Sigma}(S^{1} \times I)^{\boxtimes n}$,
  which is $(Z(C),\overline{\otimes})^{\boxtimes n}$ by
  \ref{thm/drinfeld-center-and-cylinder}.

  Let $\Sigma_{1}$ and $\Sigma_{2}$ be smooth oriented surfaces possibly
  with punctures. And let $CY$ be Crane-Yetter theory in
  dimension two. Then we have an equivalence of abelian
  categories.
  \[
    CY_{(\Sigma_{1} \cup \Sigma_{2})} \simeq CY_{\Sigma_{1}} \boxtimes_{CY_{({\Sigma_{1} \cap \Sigma_{2}})}} CY_{\Sigma_{2}},
  \]
  where $\boxtimes_{D}$ denotes the balanced (Deligne) tensor product
  over $D$ \cite{balanced-tensor-product}.
\end{theorem}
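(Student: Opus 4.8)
The plan is to obtain both assertions from the description of $CY$ in codimension two as a factorization homology theory. Recall that a premodular category $C$ is in particular a braided (hence $E_2$-) algebra object in the symmetric monoidal $2$-category $\mathrm{Cat}^{ss}_{\mathbb{k}}$ of finite semisimple $\mathbb{k}$-linear categories under Deligne's $\boxtimes$; consequently $\int_{(-)} C$ is defined on oriented surfaces and their orientation-preserving embeddings, and one has $CY_{\Sigma}(C) \simeq \int_{\Sigma} C$. The cylinder $S^1 \times I$ carries an $E_1$-structure coming from end-to-end stacking of annuli, and Theorem \ref{thm/drinfeld-center-and-cylinder} identifies $\int_{S^1 \times I} C \simeq Z(C)$ together with this $E_1$-structure and the reduced tensor product $\overline{\otimes}$.

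First I would establish the module-category structure. If $\Sigma$ has $n$ punctures, each puncture has a half-open annular collar, and these $n$ collars are disjointly embedded in $\Sigma$ as a codimension-zero submanifold. Applying $\int_{(-)} C$ to this embedding, and to the stacking maps of annuli, yields an $E_1$-action of $\bigl(\int_{S^1 \times I} C\bigr)^{\boxtimes n}$ on $\int_{\Sigma} C$; transported along Theorem \ref{thm/drinfeld-center-and-cylinder}, this is exactly the action of $(Z(C), \overline{\otimes})^{\boxtimes n}$ claimed. Next, for excision, write $\Sigma = \Sigma_1 \cup \Sigma_2$ with $\Sigma_1 \cap \Sigma_2$ a collar neighborhood, necessarily a disjoint union of open annuli $\coprod S^1 \times \mathbb{R}$. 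The $\otimes$-excision theorem for factorization homology (Ayala--Francis) gives
\[
  \int_{\Sigma_1 \cup \Sigma_2} C \;\simeq\; \int_{\Sigma_1} C \;\boxtimes_{\int_{\Sigma_1 \cap \Sigma_2} C}\; \int_{\Sigma_2} C,
\]
and under the identification above together with Theorem \ref{thm/drinfeld-center-and-cylinder} this reads $CY_{\Sigma_1 \cup \Sigma_2}(C) \simeq CY_{\Sigma_1}(C) \boxtimes_{CY_{\Sigma_1 \cap \Sigma_2}(C)} CY_{\Sigma_2}(C)$, the relative tensor product being that of \cite{balanced-tensor-product}. One must check that this balanced tensor product over $CY_{\Sigma_1 \cap \Sigma_2}(C)$, a finite $\boxtimes$-power of $(Z(C), \overline{\otimes})$, again lands in finite semisimple $\mathbb{k}$-linear categories; since $Z(C)$ is a fusion category over an algebraically closed field of characteristic $0$, all relevant relative tensor products are fusion, so this is automatic.

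The main obstacle I anticipate is that $\otimes$-excision is most naturally an $\infty$-categorical (derived) statement, whereas $CY$ in codimension two is valued in the homotopy $2$-category $\mathrm{Cat}^{ss}_{\mathbb{k}}$. One has to verify that truncating to this $2$-category introduces no higher coherence obstruction, i.e. that the two-sided bar construction computing $\boxtimes_{CY_{\Sigma_1 \cap \Sigma_2}(C)}$ already degenerates at the $2$-categorical level. Semisimplicity --- hence exactness of $\boxtimes$ and splitting of all bar complexes --- is what forces the degeneration; the remaining work is the careful bookkeeping of orientations, of the cyclic structure on the annulus, and of the compatibility between the collar gluings and the $E_1$-stacking, which is where the bulk of a fully rigorous argument would go. Alternatively, one could bypass factorization homology and prove the gluing formula directly on the combinatorial skein-theoretic model of $CY$ by a Mayer--Vietoris / van Kampen argument for the generators-and-relations presentation of $\int_{\Sigma} C$; the obstruction there is the same degeneration statement, rephrased as the fully faithfulness and essential surjectivity of an explicit gluing functor.
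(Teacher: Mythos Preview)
The paper does not give its own proof of this statement: it is quoted verbatim from \cite[Theorem 2.3]{fac-homo--kirillov-tham} as background, with no argument supplied here. So there is nothing in the present paper to compare your proposal against.

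That said, a remark on the logic of your sketch. You take as input the identification $CY_{\Sigma}(C)\simeq\int_{\Sigma}C$ and then invoke Ayala--Francis $\otimes$-excision. But the identification of the string-net model with factorization homology is exactly the main content of \cite{fac-homo--kirillov-tham}, and the standard way to prove such an identification is to verify that the skein-theoretic construction satisfies the characterizing axioms of factorization homology --- of which excision is the essential one. In other words, excision for $CY$ is an \emph{input} to the equivalence $CY\simeq\int C$, not a consequence of it; your primary route risks circularity. The ``alternative'' you mention at the end --- proving the gluing formula directly on the string-net presentation by constructing an explicit gluing functor and checking it is fully faithful and essentially surjective --- is the argument actually carried out in the cited reference, and is the non-circular path. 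Your discussion of the degeneration of the bar construction under semisimplicity is the right point to isolate in either approach.
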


\begin{remark}
  For a related result on the excision principle, see
  \cite{cooke2019excision} and
  \cite{integrating-quantum-groups-over-surfaces}.
\end{remark}

\begin{theorem}[Muger centralizer]\cite[Corollary 8.5]{fac-homo--kirillov-tham}
  Let $C$ be a premodular category, $Z'(C)$ its Muger
  centralizer, and $\Sigma = S^{2}$ be the 2-sphere. Then as
  abelian categories
  $$CY_{\Sigma}(C) \simeq Z'(C).$$
  In particular, if $C$ is modular, then the result trivializes
  as in
  $$CY_{\Sigma}(C) \simeq (Vect).$$
\end{theorem}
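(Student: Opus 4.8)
The plan is to obtain $S^{2}$ by gluing, so as to reduce the statement to the two computations already available — the disk gives $C$ (\ref{thm/cy-of-a-disk}) and the cylinder gives the Drinfeld center equipped with its reduced tensor product (\ref{thm/drinfeld-center-and-cylinder}) — and then to identify the resulting relative tensor product with the transparent subcategory of $C$. Concretely, write $S^{2}=U\cup V$ with $U=S^{2}\setminus\{N\}$ and $V=S^{2}\setminus\{S\}$ the complements of the two poles; each is an open disk, while $U\cap V=S^{2}\setminus\{N,S\}$ is an open cylinder $S^{1}\times I$. The excision principle (\ref{thm/excision-principle}) then gives an equivalence of abelian categories
\[
  CY_{S^{2}}(C)\;\simeq\;CY_{D^{2}}(C)\boxtimes_{CY_{S^{1}\times I}(C)}CY_{D^{2}}(C)\;\simeq\;C\,\boxtimes_{Z(C)}\,C,
\]
where the middle $Z(C)$ carries the reduced tensor product $\overline{\otimes}$ and the two copies of $C$ receive their right, resp.\ left, $\bigl(Z(C),\overline{\otimes}\bigr)$-module structures from the collar embeddings $S^{1}\times I\hookrightarrow U$ and $S^{1}\times I\hookrightarrow V$. (Equivalently: $S^{2}$ is the one-punctured sphere — a disk — with its puncture capped off by another disk.)

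Next I would make the module structure explicit. On underlying objects the action of $(X,\gamma)\in Z(C)$ on $M\in C$ is induced by the forgetful functor $Z(C)\to C$, $M\mapsto X\otimes M$; what upgrades this to a module structure over the \emph{reduced} rather than the ordinary tensor product is that, when a $C$-colored point in the collar is pushed into the disk, its half-braiding $\gamma$ governs how it slides past other insertions, and the coherence of that sliding is precisely what $\overline{\otimes}$ is designed to encode (cf.\ \cite{reduced--tham}, \cite{wasserman2019reduced}). I would also record the two canonical lifts $\iota^{\pm}\colon C\to Z(C)$, $Y\mapsto(Y,c^{\pm 1}_{Y,-})$, both of which split the forgetful functor and which transport the braiding of $C$ into this sliding data.

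Then comes the heart of the argument: computing $C\boxtimes_{Z(C)}C$. Realizing the balanced Deligne tensor product through $Z(C)$-module functors, or as modules over the canonical algebra $\int^{X\in Z(C)}F(X)\boxtimes{}^{*}F(X)$ in $C\boxtimes C$ (cf.\ \cite{balanced-tensor-product}), an object of $C\boxtimes_{Z(C)}C$ amounts to an object $W\in C$ together with, for each $(X,\gamma)\in Z(C)$, a coherent isomorphism identifying the two ways a copy of $X$ can be moved past $W$ on the sphere — once through the $U$-hemisphere, once through the $V$-hemisphere. Since every object of $C$ is a direct summand of $F(Z)$ for some $Z\in Z(C)$, it suffices to impose this balancing for $X=\iota^{+}(Y)$ and $X=\iota^{-}(Y)$, where the two slidings use $c$ and $c^{-1}$ respectively; matching them forces $c_{Y,W}\circ c_{W,Y}=\id_{W\otimes Y}$ for all $Y\in C$, i.e.\ $W\in Z'(C)$, and conversely every transparent $W$ carries a unique such family. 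This yields $CY_{S^{2}}(C)\simeq Z'(C)$ as abelian categories. Finally, if $C$ is modular then $Z'(C)$ is trivial — triviality of the symmetric center being Muger's characterization of modularity — so $CY_{S^{2}}(C)\simeq(Vect)$; alternatively one may use the identification $Z(C)\simeq C\boxtimes\overline{C}$, under which $C$ becomes the module category attached to the canonical Lagrangian algebra and the relative tensor product collapses directly.

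The main obstacle is the computation in the previous paragraph: turning the geometric balancing relation into the algebraic transparency condition requires genuinely controlling the interaction between the reduced tensor product $\overline{\otimes}$ (which dictates the balancing constraint) and the relative Deligne tensor product, together with a careful check that the descent datum on a transparent object both exists and is unique. By contrast the decomposition via excision and the modular specialization are essentially formal, and pinning down the module structure is a matter of unwinding the two-dimensional $CY$ model against the definition of $\overline{\otimes}$.
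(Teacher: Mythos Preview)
The paper does not supply its own proof of this statement: it is quoted verbatim as \cite[Corollary 8.5]{fac-homo--kirillov-tham} in the survey portion of Section~\ref{section/previous-work-of-cy-in-codimension-two}, with no accompanying argument. So there is nothing in the present paper to compare your proposal against.

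That said, your outline is the expected one and matches the strategy of the cited reference: decompose $S^{2}$ as two disks over an annulus, invoke excision (\ref{thm/excision-principle}) together with \ref{thm/cy-of-a-disk} and \ref{thm/drinfeld-center-and-cylinder} to obtain $CY_{S^{2}}(C)\simeq C\boxtimes_{(Z(C),\overline{\otimes})}C$, and then identify this balanced tensor product with the M\"uger center. You are also right that the last identification is where the content lies, and you flag it honestly. One caution: your description of objects of $C\boxtimes_{Z(C)}C$ as ``an object $W\in C$ together with coherent isomorphisms'' is a heuristic, not a definition; the balanced Deligne tensor product is characterized by a universal property (or realized as modules over a canonical algebra in $C\boxtimes C$, as you note), and the passage from that to the transparency condition $c_{Y,W}c_{W,Y}=\id$ requires either a concrete model of the relative product or a string-net computation on $S^{2}$ directly. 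In \cite{fac-homo--kirillov-tham} the latter route is taken, working with the skein description rather than abstractly with $\boxtimes_{Z(C)}$; your argument would need to either carry out the algebra-object computation in full or defer to that source for the final step.
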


Tham defined for a premodular category $C$ an associated category
$Z^{el}(C)$, coined the elliptic Drinfeld center. Its objects are
the triples $(X,\gamma_{1},\gamma_{2})$, where $X$ is an object of $C$ and
the $\gamma_{i}$'s are half-braidings of $X$ that satisfy certain
relations. See \cite{elliptic--tham} for a full definition. The
name is justified by the following theorem.

\begin{theorem}[elliptic Drinfeld center]\cite[Corollary 9.5+6]{fac-homo--kirillov-tham}\label{thm/elliptic-center-and-punctured-torus}
  Let $C$ be a premodular category, $Z^{el}(C)$ its elliptic
  Drinfeld center, and $\Sigma = \Sigma_{1,0}$ be a
  once-punctured torus. Then as abelian categories
  $$CY_{\Sigma}(C) \simeq Z^{el}(C).$$
  In particular, if $C$ is modular, then there is an equivalence
  of left $CY_{S^{1} \times I}(C)$-modules
  $$C \simeq CY_{D^{2}}(C) \simeq CY_{\Sigma}(C).$$
\end{theorem}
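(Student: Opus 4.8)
\emph{Proof proposal.} The plan is to realize $\Sigma = \Sigma_{1,0}$ by a short admissible gluing and to push the computations of \ref{thm/cy-of-a-disk} and \ref{thm/drinfeld-center-and-cylinder} through the excision principle \ref{thm/excision-principle}. Cutting the once-punctured torus along a non-separating simple closed curve produces a pair of pants $\Sigma_{0,3}$, so $\Sigma_{1,0}$ is the self-gluing of $\Sigma_{0,3}$ identifying two of its three boundary circles; excision (in the self-gluing form obtained from \ref{thm/excision-principle} along a collar annulus) then gives
\[
  CY_{\Sigma_{1,0}}(C)\ \simeq\ CY_{\Sigma_{0,3}}(C)\ \boxtimes_{Z(C)\boxtimes Z(C)}\ Z(C),
\]
where $Z(C) = CY_{S^1\times I}(C)$ by \ref{thm/drinfeld-center-and-cylinder}, the two left factors act through the two glued circles, and the final $Z(C)$ is the identity $Z(C)$-bimodule of the gluing cylinder, i.e. the diagonal $Z(C)\hookrightarrow Z(C)\boxtimes Z(C)$ with one factor reversed. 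Iterating once more, $\Sigma_{0,3}$ is a disk with two bands attached, so one further application of excision together with $CY_{D^2}(C)\simeq C$ (\ref{thm/cy-of-a-disk}) presents $CY_{\Sigma_{0,3}}(C)$, and hence $CY_{\Sigma_{1,0}}(C)$, purely in terms of $C$ and $Z(C) = Z(CY_{D^2}(C))$.

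Unwinding this relative Deligne tensor product is the heart of the matter. An object of it consists of an object $X$ of the disk piece $C = CY_{D^2}(C)$ (a neighbourhood of the puncture) together with, for each of the two non-separating curves, a half-braiding $\gamma_i$ of $X$ — this is exactly the datum transported across each handle circle, since a $Z(C) = Z(CY_{D^2}(C))$-module structure on $X \in C$ is the same as a half-braiding of $X$ — subject to the coherence forced by the two handles sharing the once-punctured torus: the $\gamma_i$ are compatible with one another through a hexagon-type relation, and the half-braiding induced along the remaining boundary circle, being the commutator of $\gamma_1$ and $\gamma_2$ in $\pi_1(\Sigma_{1,0})$, is the one recorded by the residual $Z(C)$-structure. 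These are precisely the relations in Tham's definition of the elliptic Drinfeld center $Z^{el}(C)$ \cite{elliptic--tham}. Matching morphisms, and noting that $\mathbb{k}$-linearity, the abelian structure and finite semisimplicity are preserved under $\boxtimes$ and relative tensor products by the general properties recorded in \cite{fac-homo--kirillov-tham} and \ref{thm/excision-principle}, yields $CY_{\Sigma}(C)\simeq Z^{el}(C)$; along the way one checks that the leftover $Z(C)$-action is, on the $Z^{el}$-side, the action of a half-braided object on the underlying object via the reduced product $\overline{\otimes}$.

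For the final assertion take $C$ modular, so that $Z(C)\simeq C\boxtimes\overline{C}$ is nondegenerate. Here the point is that attaching a handle is an equivalence of $Z(C)$-module categories, equivalently that the pair of compatible half-braidings $(\gamma_1,\gamma_2)$ in a $Z^{el}$-object carries no information beyond $X$, so that $Z^{el}(C)\simeq C$. I would deduce this from the self-gluing description above: the handle identification contracts the ``handle object'' $\bigoplus_i\, i\boxtimes\overline{i}\in Z(C)$, and this acts invertibly precisely because the $S$-matrix of a modular category is nondegenerate (Verlinde); this is the categorical shadow, in codimension two, of the Crane--Yetter trivialization over modular categories (\ref{thm/wrt-is-the-boundary-theory-of-cy}). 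Combining with $C\simeq CY_{D^2}(C)$ (\ref{thm/cy-of-a-disk}) and carrying the single-boundary-circle $CY_{S^1\times I}(C) = Z(C)$-action through the equivalences gives the stated chain $C\simeq CY_{D^2}(C)\simeq CY_{\Sigma}(C)$ of left $Z(C)$-modules.

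The step I expect to be the main obstacle is the unwinding in the second paragraph: following the half-braiding data through the balanced tensor product $\boxtimes_{Z(C)\boxtimes Z(C)} Z(C)$ and verifying that the relations it produces match, on the nose, those imposed in \cite{elliptic--tham} — in particular obtaining the mixed hexagon and the boundary-commutator identity with the correct braidings and ribbon twists, and confirming that the bimodule trace hides no further relations. Keeping straight the orientations, and the reversed tensor factor in the diagonal embedding $Z(C)\hookrightarrow Z(C)\boxtimes Z(C)$, is exactly where side and twist errors would otherwise creep in.
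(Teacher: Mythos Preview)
In this paper the theorem is not proved but \emph{cited} from \cite{fac-homo--kirillov-tham}; the paper's own contribution is to recover it as the $n=2$, $\sigma=(13)(24)$ case of the main theorem \ref{main-statement}. That proof (section \ref{section/proof-for-the-main-statement}) is entirely different from yours: it makes no use of excision. Instead it works directly with string nets in $\Sigma_\sigma\times[0,1]$, first replacing $cy_{\Sigma_\sigma}(C)$ by the equivalent full subcategory $ho.cy_{\Sigma_\sigma}(C)$ on single-point objects $(p,X)$, and then writing down explicit mutually inverse functors $J$ and $G$ between $\Kar(ho.cy_{\Sigma_\sigma}(C))$ and $Z_\sigma(C)$, checked via the sliding lemma \ref{lemma/sliding-lemma} and graphical calculus. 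The half-braidings and the commutation relations \ref{def/comm-relation} are read off from the picture rather than extracted from a balanced tensor product.

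Your excision-based route is reasonable in spirit and is presumably closer to what \cite{fac-homo--kirillov-tham} actually does, but as written it has a real gap beyond the one you flag. The excision principle stated here as \ref{thm/excision-principle} is for a decomposition $\Sigma_1\cup\Sigma_2$ into two pieces; your first move cuts $\Sigma_{1,0}$ along a \emph{non-separating} curve, which is a self-gluing, and the formula you write down (a balanced tensor product over $Z(C)\boxtimes Z(C)$ against the diagonal $Z(C)$) is not an instance of \ref{thm/excision-principle} as stated. A self-gluing/trace version of excision does exist in this framework, but you would need to either prove it or locate it in \cite{fac-homo--kirillov-tham} before invoking it. Combined with the unwinding you already identify as the obstacle --- matching the relations coming out of $\boxtimes_{Z(C)\boxtimes Z(C)}Z(C)$ with the (:comm 2) relation of \ref{def/comm-relation} (equivalently Tham's elliptic relations) --- your argument is at present a plausible outline rather than a proof. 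The string-net approach of this paper sidesteps both issues by never leaving the surface.
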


\begin{theorem}\cite[Corollary 4.5]{reduced--tham}
  Let $C$ be a premodular category, $Z$ the Drinfeld center
  construction, $\overline{\otimes}$ the stacking tensor product, and
  $\Sigma = S^{1} \times S^{1}$ be the standard torus. Then
  $$CY_{\Sigma}(C) \simeq Z((Z(C), \overline{\otimes})).$$
\end{theorem}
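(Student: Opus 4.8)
The plan is to present the closed torus as a self-gluing of the open cylinder $S^{1}\times I$, feed this into the excision principle \ref{thm/excision-principle}, and then recognise the resulting relative Deligne product as the categorical cocenter, which for a (multi)fusion category coincides with its Drinfeld centre. Throughout, write $\mathcal{A}:=(Z(C),\overline{\otimes})$, which is a multifusion category by \ref{thm/drinfeld-center-and-cylinder}.

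First I would fix a decomposition. Writing $T^{2}=S^{1}\times(\mathbb{R}/\mathbb{Z})$ and choosing a small $\epsilon>0$, put $A_{1}=S^{1}\times(-\epsilon,\tfrac12+\epsilon)$ and $A_{2}=S^{1}\times(\tfrac12-\epsilon,1+\epsilon)$. Then $A_{1}\cong A_{2}\cong S^{1}\times I$, one has $A_{1}\cup A_{2}=T^{2}$, and $A_{1}\cap A_{2}$ is a disjoint union of two open annuli, namely collars of $\{t=\tfrac12\}$ and of $\{t=0\}$. Applying \ref{thm/excision-principle} together with the fact that $CY$ carries disjoint unions to Deligne products gives
$$CY_{T^{2}}(C)\ \simeq\ CY_{A_{1}}(C)\ \boxtimes_{\,CY_{S^{1}\times I}(C)\,\boxtimes\,CY_{S^{1}\times I}(C)}\ CY_{A_{2}}(C).$$

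Next I would identify the module structures. Because gluing an annular collar onto either end of $A_{i}\cong S^{1}\times I$ again produces a cylinder, \ref{thm/drinfeld-center-and-cylinder} shows that $CY_{A_{i}}(C)\simeq\mathcal{A}$ together with the two commuting actions of $\mathcal{A}$ coming from its two collars is precisely the regular $\mathcal{A}$-$\mathcal{A}$-bimodule ${}_{\mathcal{A}}\mathcal{A}_{\mathcal{A}}$. Tracking the orientations induced along the two gluing circles of $T^{2}$ then shows that in the relative product of the previous display the left action on each factor is balanced against the right action on the other, so the expression is the zeroth categorical Hochschild homology (the cocenter) of $\mathcal{A}$:
$$CY_{T^{2}}(C)\ \simeq\ \mathcal{A}\ \boxtimes_{\,\mathcal{A}\,\boxtimes\,\mathcal{A}^{\mathrm{rev}}}\ \mathcal{A}\ =\ \mathrm{HH}_{0}(\mathcal{A}),$$
where $\mathcal{A}^{\mathrm{rev}}$ denotes $\mathcal{A}$ with the reversed tensor product. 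Finally I would invoke the standard fact that over an algebraically closed field of characteristic $0$ a (multi)fusion category $\mathcal{A}$ satisfies $\mathrm{HH}_{0}(\mathcal{A})\simeq Z(\mathcal{A})$: the relative Deligne product over a (multi)fusion category is computed by a category of module functors (\cite{balanced-tensor-product}), and since the regular bimodule of a spherical (multi)fusion category is self-dual---sphericity of $\mathcal{A}$ being inherited from the ribbon structure on the premodular $C$---this functor category is the category of $\mathcal{A}$-$\mathcal{A}$-bimodule endofunctors of ${}_{\mathcal{A}}\mathcal{A}_{\mathcal{A}}$, which is exactly $Z(\mathcal{A})$. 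Substituting $\mathcal{A}=(Z(C),\overline{\otimes})$ gives $CY_{T^{2}}(C)\simeq Z\big((Z(C),\overline{\otimes})\big)$ as finite semisimple abelian categories (indeed as braided categories), recovering \cite[Corollary 4.5]{reduced--tham}; compare also \cite{integrating-quantum-groups-over-surfaces}.

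The step I expect to be the main obstacle is the bimodule bookkeeping in the third paragraph: one must check that the self-gluing genuinely yields the cocenter $\mathcal{A}\boxtimes_{\mathcal{A}\boxtimes\mathcal{A}^{\mathrm{rev}}}\mathcal{A}$ and not the trivial contraction $\mathcal{A}\boxtimes_{\mathcal{A}\boxtimes\mathcal{A}}\mathcal{A}\simeq\mathcal{A}$, which depends on getting the two induced orientations along the gluing curves of $T^{2}$ correct and on confirming that the stacking structure of \ref{thm/drinfeld-center-and-cylinder} is literally the regular-bimodule structure. A secondary difficulty is running the last identification in the genuinely multifusion (rather than fusion) setting and verifying that finite semisimplicity is preserved under the relative Deligne product---both again resting on sphericity of $\mathcal{A}$.
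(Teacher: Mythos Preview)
The paper does not actually prove this statement: it is merely quoted from \cite[Corollary~4.5]{reduced--tham} in the survey of prior results in Section~\ref{section/previous-work-of-cy-in-codimension-two}, with no accompanying argument. So there is no ``paper's own proof'' to compare your proposal against.

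That said, your strategy is the standard one and is essentially how the result is obtained in the factorization-homology literature (e.g.\ \cite{integrating-quantum-groups-over-surfaces}): cut the torus into two annuli, apply excision, recognise the self-gluing as the categorical trace $\mathrm{HH}_{0}(\mathcal{A})$, and then invoke the equivalence $\mathrm{HH}_{0}(\mathcal{A})\simeq Z(\mathcal{A})$ for (multi)fusion $\mathcal{A}$. The caveats you flag are real. First, the orientation bookkeeping that distinguishes $\mathcal{A}\boxtimes_{\mathcal{A}\boxtimes\mathcal{A}^{\mathrm{rev}}}\mathcal{A}$ from $\mathcal{A}\boxtimes_{\mathcal{A}\boxtimes\mathcal{A}}\mathcal{A}$ must be checked carefully against the conventions of \ref{thm/excision-principle}; this is where most attempted proofs go wrong. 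Second, the identification $\mathrm{HH}_{0}(\mathcal{A})\simeq Z(\mathcal{A})$ via \cite{balanced-tensor-product} is cleanest in the fusion case, and in the genuinely multifusion setting you should appeal to separability rather than sphericity (over an algebraically closed field of characteristic~$0$ every multifusion category is separable, which is what guarantees the relevant bimodule self-duality and the finiteness of the relative product). With those two points nailed down, your argument goes through.
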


One of the main theorem of \cite{fac-homo--kirillov-tham} is that
Crane-Yetter theorem in dimension two also trivializes when the
input data is modular.

\begin{theorem}\cite[Remark 9.8]{fac-homo--kirillov-tham}\label{thm/cy-trivializes-over-mtc}
  Let $C$ be a modular category, and $\Sigma$ an $n$-punctured surface
  of genus $g$. Then up to equivalence $CY_{\Sigma}(C)$ is independent
  of $g$. In fact, we have an equivalence of module categories
  over $(Z(C),\overline{\otimes})^{\boxtimes n}$
  $$CY_{\Sigma}(C) \simeq C^{\boxtimes n}.$$
  Notice that when $n=0$ the power is the category of finite
  dimensional vector spaces.
\end{theorem}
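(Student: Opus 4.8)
The plan is to present $\Sigma$ by an explicit decomposition into cylinders, compute $CY_{\Sigma}(C)$ through the excision principle \ref{thm/excision-principle}, and then use the drastic simplifications that modularity of $C$ forces on the three elementary pieces: the disk $D^2$, the cylinder $S^{1}\times I$, and the once-punctured torus, which I denote by $T$. Throughout I write $S_k$ for the $k$-punctured sphere, and I use freely that $CY_{D^2}(C)\simeq C$ (\ref{thm/cy-of-a-disk}), that $CY_{S^1\times I}(C)\simeq(Z(C),\overline{\otimes})$ (\ref{thm/drinfeld-center-and-cylinder}), and that for modular $C$ the Müger-centralizer theorem (\cite[Corollary 8.5]{fac-homo--kirillov-tham}) gives $CY_{S^2}(C)\simeq\mathrm{Vect}$.

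\emph{Cylinder decomposition and genus independence.} Assume first $n\ge 1$. An Euler-characteristic count shows $\Sigma\cong S_{n+g}\cup(g\cdot T)$: one glues $g$ copies of $T$ onto $g$ of the $n+g$ punctures of $S_{n+g}$, each gluing performed along a cylindrical neighbourhood of the puncture, so that $\Sigma_1\cap\Sigma_2$ is a disjoint union of $g$ cylinders. Iterating \ref{thm/excision-principle} then gives, as module categories over $(Z(C),\overline{\otimes})^{\boxtimes n}$ (one factor per remaining puncture),
\[
  CY_{\Sigma}(C)\ \simeq\ CY_{S_{n+g}}(C)\ \boxtimes_{(Z(C),\overline{\otimes})^{\boxtimes g}}\ \bigl(CY_{T}(C)^{\boxtimes g}\bigr).
\]
By the elliptic-center theorem \ref{thm/elliptic-center-and-punctured-torus}, for modular $C$ one has $CY_{T}(C)\simeq CY_{D^2}(C)=C$ as a left $CY_{S^1\times I}(C)$-module: as a module over the cylinder, a handle is indistinguishable from a disk. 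Substituting $CY_{D^2}(C)$ for each $CY_{T}(C)$ and reading excision backwards simply caps off those $g$ punctures with disks, whence $CY_{\Sigma}(C)\simeq CY_{S_n}(C)$, visibly independent of $g$. The closed case $n=0$ is the same: $\Sigma\cong S_{g}\cup(g\cdot T)$, and capping the handles yields $S_0=S^2$, so $CY_{\Sigma}(C)\simeq CY_{S^2}(C)\simeq\mathrm{Vect}=C^{\boxtimes 0}$.

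\emph{The punctured sphere.} It remains to identify $CY_{S_n}(C)$ with $C^{\boxtimes n}$ as a module over $(Z(C),\overline{\otimes})^{\boxtimes n}$. The crucial fact is that, for modular $C$, the disk category $C=CY_{D^2}(C)$ exhibits a Morita equivalence between $(Z(C),\overline{\otimes})=CY_{S^1\times I}(C)$ and $\mathrm{Vect}$. Indeed, two disks glued along a cylinder form $S^2$, so \ref{thm/excision-principle} and the Müger theorem give $C\boxtimes_{(Z(C),\overline{\otimes})}C\simeq CY_{S^2}(C)\simeq\mathrm{Vect}$; a finiteness-and-duality argument (using $\dim Z(C)=(\dim C)^{2}$) promotes this single trivialization to invertibility of the bimodule $C$, so that $(-)\boxtimes_{(Z(C),\overline{\otimes})}C$ is an equivalence on the relevant $2$-categories of bimodule categories. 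Now realize $S_n$ as the disk with $n-1$ disjoint open sub-disks removed; gluing the sub-disks back, excision gives
\[
  C\ =\ CY_{D^2}(C)\ \simeq\ CY_{S_n}(C)\ \boxtimes_{(Z(C),\overline{\otimes})^{\boxtimes(n-1)}}\ C^{\boxtimes(n-1)}.
\]
Applying the quasi-inverse of $(-)\boxtimes_{(Z(C),\overline{\otimes})}C$ to each of the $n-1$ factors recovers $CY_{S_n}(C)\simeq C^{\boxtimes n}$ with the correct $(Z(C),\overline{\otimes})^{\boxtimes n}$-module structure; the base cases $n=1,2$ are \ref{thm/cy-of-a-disk} and \ref{thm/drinfeld-center-and-cylinder}, the latter matched to $C^{\boxtimes 2}$ via the standard equivalence $Z(C)\simeq C\boxtimes C^{\mathrm{rev}}$ for modular $C$.

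\emph{Main obstacle.} The surface topology and the formal manipulations are routine; the real work concentrates in two places. First, the module bookkeeping: \ref{thm/excision-principle} and \ref{thm/elliptic-center-and-punctured-torus} are phrased as equivalences of module categories, and one must verify that the iterated balanced tensor products over the various copies of $(Z(C),\overline{\otimes})$ assemble coherently as $\Sigma$ is built up, and that each substitution above respects all of these actions simultaneously — only then does one obtain the claimed module equivalence rather than a bare equivalence of abelian categories. Second, and more serious, the invertibility of $C$ as a bimodule over $(Z(C),\overline{\otimes})$: the Müger theorem supplies the trivialization $C\boxtimes_{(Z(C),\overline{\otimes})}C\simeq\mathrm{Vect}$ of one composite, but upgrading this to a genuine Morita equivalence — equivalently, checking $\mathrm{Fun}_{(Z(C),\overline{\otimes})}(C,C)\simeq\mathrm{Vect}$ — is exactly the step where non-degeneracy of the braiding on $C$, i.e. modularity and not merely premodularity, is indispensable.
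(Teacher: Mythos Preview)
Your argument is correct and, at the level of ingredients, matches the paper: the paper's own proof is a single sentence --- ``uses the excision principle and a basic equivalence $C\boxtimes C^{bop}\simeq Z(C)$'' --- and you use both. The organization differs, though. You reduce the genus by quoting the elliptic-center result \ref{thm/elliptic-center-and-punctured-torus} (handles $\simeq$ disks as $(Z(C),\overline{\otimes})$-modules), handle the closed case via the M\"uger-center result, and then treat $S_n$ by a Morita-invertibility argument for $C$ over $(Z(C),\overline{\otimes})$; the equivalence $Z(C)\simeq C\boxtimes C^{\mathrm{rev}}$ enters only at the very end as the $n=2$ base case. The paper's hint points to a more uniform route: for modular $C$ one has $(Z(C),\overline{\otimes})\simeq C\boxtimes C^{bop}$ \emph{as monoidal categories}, so every balanced tensor product over the cylinder category in an excision computation is a balanced tensor product over $C\boxtimes C^{bop}$, which one can evaluate directly along a pants decomposition of $\Sigma$ without appealing to \ref{thm/elliptic-center-and-punctured-torus} or a separate Morita step. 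In particular, the ``more serious'' obstacle you flag --- upgrading $C\boxtimes_{(Z(C),\overline{\otimes})}C\simeq\mathrm{Vect}$ to genuine invertibility --- dissolves once $(Z(C),\overline{\otimes})$ is identified with $C\boxtimes C^{bop}$, since $C$ is then visibly the regular bimodule. Your version has the virtue of being modular (it reuses already-stated lemmas and makes the role of each hypothesis explicit); the paper's is shorter but leaves more to the reader.
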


An easy proof of this fact due to the author of this paper uses
the excision principle and a basic equivalence
$C \boxtimes C^{bop} \simeq Z(C)$.

\subsection{Main result}

\subsubsection{$CY \simeq Z$}

The main result of this paper is an explicit calculation of the
Crane-Yetter theory for all smooth oriented surfaces with at
least one puncture. In this section, we overview the statement
and its consequences, leaving a detailed proof to section
\ref{section/proof-for-the-main-statement}.

Crane-Yetter theory in dimension two is defined as a linear
category whose spaces of morphisms are vector spaces presented by
many complicated generators and relations. By a calculation we
mean to describe it as a category whose description is much
smaller. An analogy of this is that the first homology of the
circle ``calculated'' to be the group of integers
$$H_{1}(S^{1}) \simeq \mathbb{Z}.$$

Given any open surface $\Sigma$, we present it by an oriented
$2$-disk and some segments of its boundary glued. Which segments
are glued together are described by some combinatorial data,
called the admissible gluings. With a choice of an admissible
gluing $\sigma$, the categorical center of higher genera $Z = Z_{\sigma}$
is a specific category explicitly constructed in
\ref{def/categorical-center-of-higher-genera}, with some basic
properties given in \ref{subsection/properties-of-Z}. Roughly,
given a premodular category $C$ and an admissible gluing $\sigma$ of
rank $n$, the categorical center of higher genera $Z = Z_{\sigma}(C)$
is a category with objects of the form $(X,\gamma_{1},\ldots,\gamma_{n})$ where
$X$ is a $C$-object and the $\gamma_{i}$'s are half-braidings
satisfying specific relations. The main statement is that $Z$ is
equivalent to $CY(\Sigma)$ as a finite semisimple abelian category.

\begin{center}
  \includegraphics[height=2cm]{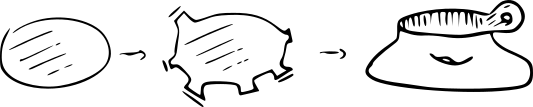}
\end{center}

\begin{theorem}[Main statement]\label{mirror/theorem/main-statement}
  Let $n$ be a nonnegative integer, $\sigma$ an admissible gluing of
  rank $n$, $\Sigma_{\sigma}$ the surface constructed from $\sigma$, $C$ a
  premodular category, $CY_{\Sigma_{\sigma}}(C)$ the Crane-Yetter theory of
  $\Sigma_{\sigma}$ depending on $C$, and $Z_{\sigma}(C)$ the categorical center
  of higher genera of $C$ with respect to $\sigma$. \\

  \noindent Then we have an equivalence of finite semisimple
  abelian categories
  $$
  CY_{\Sigma_{\sigma}}(C) \simeq Z_{\sigma}(C).
  $$
\end{theorem}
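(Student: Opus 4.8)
The plan is to combine the $\sigma$-presentation of $\Sigma_\sigma$ with the excision principle \ref{thm/excision-principle}, reducing the computation of $CY_{\Sigma_\sigma}(C)$ to a single relative Deligne tensor product assembled out of copies of $C$, and then to identify that tensor product with the explicit model $Z_\sigma(C)$ of \ref{def/categorical-center-of-higher-genera}.

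First I would unwind the $\sigma$-construction (\ref{def/sigma-construction}): $\Sigma_\sigma$ is obtained from an oriented $2$-disk by gluing $n$ disjoint pairs of boundary arcs according to the admissible gluing $\sigma$, leaving at least one free boundary arc (which accounts for the puncture). I would thicken each gluing locus to a band $H_i\cong(0,1)\times(0,1)$, so that $\Sigma_\sigma = D_0\cup H_1\cup\cdots\cup H_n$ with $D_0$ an open disk and $D_0\cap H_i$ a disjoint union of two rectangles $R_i^{\pm}$, one for each arc of the $i$-th pair (the sign recording the relative orientation prescribed by $\sigma$). Applying \ref{thm/excision-principle} repeatedly to this cover and using that $CY$ of a disk, equivalently a rectangle, is $C$ by \ref{thm/cy-of-a-disk}, yields an equivalence
$$CY_{\Sigma_\sigma}(C)\;\simeq\; C\ \boxtimes_{\,(C\boxtimes C^{bop})^{\boxtimes n}}\, C^{\boxtimes n},$$
where the $i$-th factor $C\boxtimes C^{bop}$ acts on the central $C$ through the arcs $R_i^{\pm}$ and on the $i$-th outer $C$ (the band $H_i$) through its two ends, and where the cyclic order in which the $2n$ arcs $R_i^{\pm}$ sit along $\partial D_0$ is precisely the combinatorial datum $\sigma$.

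Next I would evaluate this relative tensor product. The basic building block is the identity $C\boxtimes_{C\boxtimes C^{bop}}C\simeq Z(C)$, which is \ref{thm/drinfeld-center-and-cylinder} read through excision (a cylinder is a square with one pair of opposite edges glued) together with the equivalence $C\boxtimes C^{bop}\simeq Z(C)$ recorded after \ref{thm/cy-trivializes-over-mtc}; unwinding the coend that computes $\boxtimes_{C\boxtimes C^{bop}}$ shows that a single gluing equips an object $X\in C$ with exactly one half-braiding $\gamma$. Performing the $n$ gluings in turn, each band $H_i$ contributes one half-braiding $\gamma_i$ of the central object $X$, and the descent conditions coming from the overlaps $R_i^{\pm}\cap R_j^{\pm}$ — governed by how the arcs interleave along $\partial D_0$, i.e. by $\sigma$ — are exactly the system of relations among the $\gamma_i$ in the definition of the categorical center of higher genera. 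This gives the equivalence $CY_{\Sigma_\sigma}(C)\simeq Z_\sigma(C)$ on objects and morphisms. Finally, since $C$ is premodular hence finite semisimple, and a relative Deligne tensor product of finite semisimple categories over a finite semisimple one is again finite semisimple, the equivalence is automatically one of finite semisimple abelian categories, consistently with the properties of $Z_\sigma(C)$ established in \ref{subsection/properties-of-Z}.

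The main obstacle is the identification in the previous paragraph: turning the abstract relative tensor product $C\boxtimes_{(C\boxtimes C^{bop})^{\boxtimes n}}C^{\boxtimes n}$ into the concrete description of $Z_\sigma(C)$ with objects $(X,\gamma_1,\ldots,\gamma_n)$ and reading off exactly the relations dictated by $\sigma$. I expect this to require working directly with the dimension-$2$ definition of $CY$ from \cite{fac-homo--kirillov-tham} — colored graphs on the surface modulo the skein relations — putting every graph on $\Sigma_\sigma$ into standard position with respect to $D_0$ and the bands $H_i$, and checking that the resulting skein datum is a morphism in $Z_\sigma(C)$ and conversely, i.e. constructing mutually inverse functors $CY_{\Sigma_\sigma}(C)\rightleftarrows Z_\sigma(C)$ explicitly. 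The orientation bookkeeping (the $C^{bop}$ factors) and the interleaving pattern of the half-braidings are the points where care is needed; the remaining verifications are routine.
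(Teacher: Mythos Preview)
Your route is genuinely different from the paper's, and the difference is worth noting. The paper does \emph{not} invoke the excision principle \ref{thm/excision-principle} at all. Instead it works directly inside the string-net category: it isolates a full subcategory $ho.cy_{\Sigma_\sigma}(C)\subset cy_{\Sigma_\sigma}(C)$ whose objects are a single $C$-colored point at the center of the disk, shows by an explicit isotopy/compression argument that the inclusion is an equivalence, and then writes down explicit mutually inverse functors $J$ and $G$ between $\Kar(ho.cy_{\Sigma_\sigma}(C))$ and $Z_\sigma(C)$ using the $\Omega$-strand calculus and the sliding lemma \ref{lemma/sliding-lemma}. No balanced tensor products appear. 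What this buys is that the three commutation relations \texttt{(:comm 1)}--\texttt{(:comm 3)} in \ref{def/comm-relation} fall out mechanically from how the legs of the $\sigma$-construction interleave, rather than having to be extracted from descent data for an iterated $\boxtimes$.

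Your proposal, by contrast, passes through an intermediate description $C\boxtimes_{(C\boxtimes C^{bop})^{\boxtimes n}}C^{\boxtimes n}$ and then must identify that with $Z_\sigma(C)$. Two concerns. First, a small but genuine error: the equivalence $C\boxtimes C^{bop}\simeq Z(C)$ you cite after \ref{thm/cy-trivializes-over-mtc} is stated there only for \emph{modular} $C$; it is false for a general premodular category, so it cannot be used as an input here. (The statement $C\boxtimes_{C\boxtimes C^{bop}}C\simeq Z(C)$ is the one you actually need, and that does follow from \ref{thm/drinfeld-center-and-cylinder} via excision without the modular-only fact.) Second, and more substantively, the step you flag as ``the main obstacle'' is in fact the entire content of the theorem: matching the descent/coend conditions of the iterated balanced tensor product with the specific three-case relations of \ref{def/comm-relation}, and doing so in a way that tracks the cyclic interleaving $\sigma$ of the $2n$ arcs. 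You yourself anticipate that this will force you back to putting colored graphs into standard position on $\Sigma_\sigma$ and building explicit functors $CY_{\Sigma_\sigma}(C)\rightleftarrows Z_\sigma(C)$ --- which is exactly the paper's proof. So the excision layer is a detour: it repackages the problem but does not reduce it, and the hard identification still has to be done by the direct string-net argument.
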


\noindent A detailed proof of the main statement is given in
section \ref{section/proof-for-the-main-statement}.

\begin{example}[$n=0$]
  For $n=0$, the surface $\Sigma$ is the open disk, the categorical
  center of higher genera reduces to the underlying premodular
  category $C$, so the theorem recovers that $CY_{\Sigma}(C) \simeq C$ as
  shown in \ref{thm/cy-of-a-disk}.
\end{example}
\begin{example}[$n=1$]
  For $n=1$, the only possible surface is the cylinder, the
  categorical center of higher genera reduces to the Drinfeld
  center $Z(C)$. Hence, the theorem recovers that
  $CY_{\Sigma}(C) \simeq Z(C)$ as shown in
  \ref{thm/drinfeld-center-and-cylinder}.
\end{example}
\begin{example}[$n=2$]
  For $n=2$, there are
  two possible surfaces: the $1$-punctured torus and the
  $3$-punctured disk. In the former case, the categorical center
  of higher genera reduces to the elliptic center $Z^{el}(C)$, so
  the theorem recovers that $CY_{\Sigma}(C) \simeq Z^{el}(C)$ as shown in
  \ref{thm/elliptic-center-and-punctured-torus}. In the later
  case, the theorem provides a new result.
\end{example}

\subsubsection{Remarks}

\begin{remark}\label{remark/shape-algebra-duality}
  In $H_{1}(S^{1}) \simeq \mathbb{Z}$, one sees the algebra of the
  shape $S^{1}$ and the shape of the algebra $\mathbb{Z}$. Our
  main result should be viewed as a higher analogue. That is, one
  sees the (higher) algebra of the shape $\Sigma_{\sigma}$ and the shape of
  the (higher) algebra $Z_{\sigma}$.
\end{remark}

\begin{remark}
  A full definition for premodular categories is given in
  \ref{def/premodular-category}.
\end{remark}

\begin{remark}
  These categories have their tensor structures and module
  categorical structures \cite{egno/tensor-cats} coming from
  their topological nature. This will be treated in the author's
  following work.
\end{remark}

\begin{remark}
  The smoothness condition is not necessary for our theory, but
  is included for the sake of simplicity. Indeed, later we will
  see that the Crane-Yetter theory in dimension $2$ can be
  defined based on stringnets. With the smooth structure, it is
  easier to regulate how they meet each other. On the other hand,
  Crane-Yetter theory works also in the PL-setting, parallel to
  its $3$-dimensional analogue, the Turaev-Viro theory. Curious
  readers are refer to a setup given in
  \cite{kirillov-balsam/turaev-viro-I}.
\end{remark}

\subsection{Summary of each section}

\begin{itemize}
  \item Section \ref{section/preliminaries}: Premodular
        categories and their graphical calculus are treated with
        examples and useful facts.
  \item Section \ref{section/topological-theory}: The relevant
        topological theory, namely the Crane-Yetter theory in
        dimension two, is treated formally in terms of
        string-nets (\cite{kirillov/string-net}).
  \item Section \ref{section/algebraic-theory}: The relevant
        algebraic theory, namely the categorical center of higher
        genera $Z_{\sigma}(C)$, is constructed. We prove some of its
        basic properties, such as its finite semisimple
        abelianess and its ambidextrous adjunction with the
        underlying $C$.
  \item Section \ref{section/proof-for-the-main-statement}: The
        proof for the main theorem is given, which bridges the
        topological theory and the algebraic theory.
  \item Section \ref{section/outlook-and-remarks}: Outlook and
        remarks.
  \item Appendix: For completeness, we include formal
        definitions, propositions, and detailed proofs.
\end{itemize}

\section{Preliminaries} \label{section/preliminaries}

\subsection{Premodular Category}

A full definition of a premodular category from scratch is
tedious but well known to experts. However, for completeness we
include it in the appendix
\ref{section/tensor-category}. In short, it is a
fusion category with a braiding and a spherical structure.

\begin{definition}[Premodular category]
  A premodular category is a braided fusion category equipped
  with a spherical structure.
\end{definition}

\begin{example}[Finite group]\label{example/premodular-category--rep-G} Let $G$
  be a finite group. Then the category $Rep(G)$ of
  finite-dimensional linear representations of $G$ over
  $\mathbb{k}$ has a natural structure of a premodular category.
\end{example}

\begin{example}[Drinfeld double]\label{example/premodular-category--rep-DG} Let
  $G$ be a finite group and $D(G)$ its Drinfeld double over
  $\mathbb{k}$. Then the category $Rep(D(G))$ of
  finite-dimensional linear representations of $D(G)$ over
  $\mathbb{k}$ has a natural structure of a premodular category.
\end{example}

\begin{example}[Crossed module] \label{example/premodular-category--rep-X}
  Let $\mathfrak{X}$ be a finite $2$-group (or called a finite
  crossed-module) \cite{bantay/chars-xmods}. Then the category
  $Rep(\mathfrak{X})$ of finite-dimensional linear
  representations of $\mathfrak{X}$ over $\mathbb{k}$ has a
  natural structure of a premodular category.
\end{example}

\begin{remark}
  Let $G$ be a finite group. Both $G$ and $D(G)$ can be viewed as
  special cases of finite crossed modules. Hence,
  \ref{example/premodular-category--rep-X} generalizes
  \ref{example/premodular-category--rep-G} and
  \ref{example/premodular-category--rep-DG}.
\end{remark}

\begin{example}[Quantum group] \label{example/premodular-category--quantum-group}
  In the case $\mathbb{k} = \mathbb{C}$, let $\mathfrak{g}$ be a
  semisimple Lie algebra and $q$ a root of unity. The
  semisimplified category $Rep(U_{q}(\mathfrak{g}))$ of the
  category of finite-dimensional representations of the quantum
  group $U_{q}(\mathfrak{g})$ has a natural structure of a
  premodular category.
\end{example}

\begin{example}[Even part of the quantum
  $\mathfrak{sl}_{2}$] \label{example/premodular-category--even-part-of-quantum-sl2}
  In the case $\mathbb{k} = \mathbb{C}$, let $q$ be a root of
  unity. The semisimplified category
  $C = Rep(U_{q}(\mathfrak{sl}_{2}))$ of the category of
  finite-dimensional representations of the quantum group
  $U_{q}(\mathfrak{sl}_{2})$ has a structure of a modular
  category. The even part $C_{0}$ of $C$ has a structure of a
  premodular category \cite{q-mackay}.
\end{example}

\subsection{Graphical Calculus}

We will use the technique of graphical calculus while dealing
with premodular categories. For a pedagogical exposition, see for
example (\cite{kirillov/mtc} and \cite{quantum-group--kassel}).
Notice however that we draw the morphisms as ribbon tangles in
the downward direction.

\begin{center}
  \includegraphics[height=5cm]{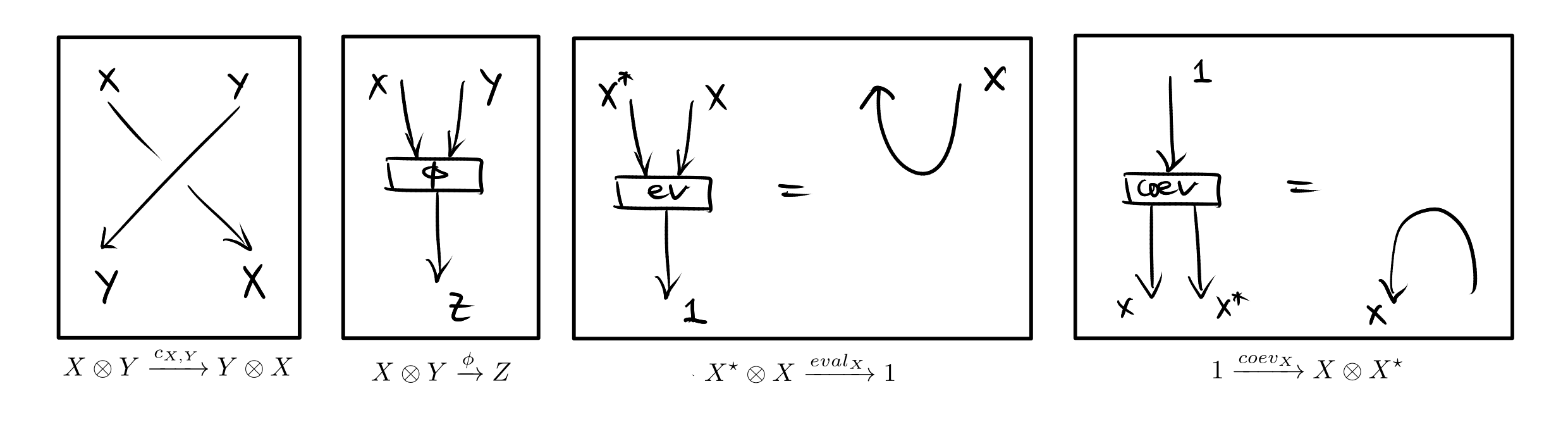}
\end{center}

\noindent An advantage of this is that many equalities among
morphisms can be proved graphically, thanks to \cite[Theorem 2.3.10]{kirillov/mtc}. For
example, to prove
$$eval_{Y} \circ c_{X,Y} \circ c_{X,Y^{\star}} \circ c_{X,Y} \circ coev_{Y} = c_{X,Y},$$
it suffices to establish an isotopy of ribbon tangles, which is
an obviously trivial task:

\begin{center}
  \includegraphics[height=3cm]{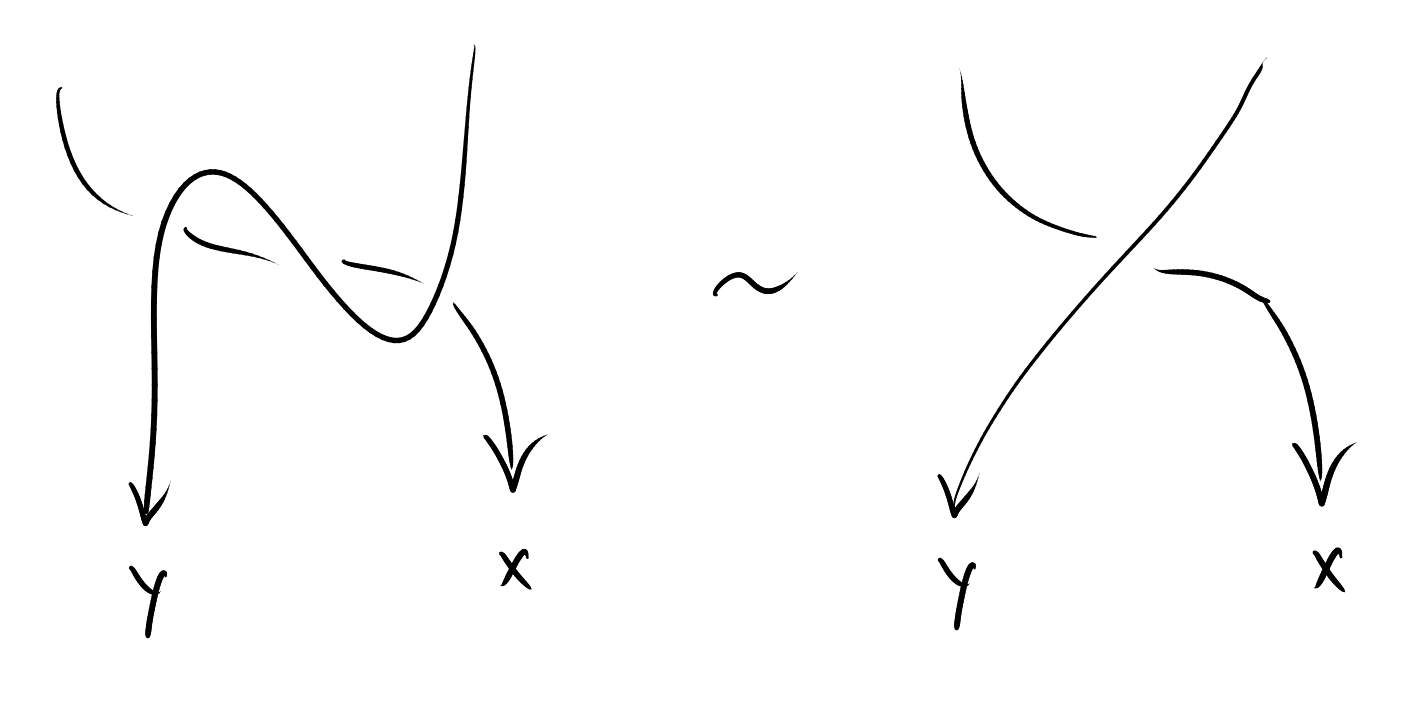}
\end{center}

\noindent In the rest of the section, we provide some useful
lemmas and notations.

\begin{lemma}\label{lemma/dual-of-hom-space}
  Let $C$ be a premodular category with spherical structure $a$.
  Let $X, Y$ be $C$-objects. Define a pairing of
  $\mathbb{k}$-linear spaces
  $$Hom_{C}(X,Y) \otimes Hom_{C}(Y,X) \xrightarrow{(,)} \mathbb{k}$$
  that sends $\phi \otimes \psi$
  to $$eval_{B^{\star}} \circ (a_{B} \otimes 1_{X}) \circ (\phi \otimes \psi) \circ coev_{A} \in End_{C}(\mathbb{1}) \simeq \mathbb{k}.$$
  Then the pairing is nondegenerate by the semisimplicity of $C$.
  Moreover, $Hom_{C}(X^{\star},Y^{\star}) \simeq Hom_{C}(Y,X)$ by the rigidity
  of $C$, so $Hom_{C}(X^{\star},Y^{\star})$ can be naturally realized as
  the dual vector space of $Hom_{C}(X,Y)$.
\end{lemma}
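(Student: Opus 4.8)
The plan is to prove Lemma \ref{lemma/dual-of-hom-space} in three stages: first set up the pairing carefully using graphical calculus, then establish nondegeneracy via semisimplicity, and finally reconcile the two identifications of the dual space. Throughout I would draw every morphism as a ribbon tangle in the downward direction, as the paper fixes, so that $\phi \in Hom_C(X,Y)$ and $\psi \in Hom_C(Y,X)$ stack to a single endomorphism of $\mathbb{1}$ once we close them up with $coev_A$ at the top and $eval$ at the bottom, inserting the sphericality morphism $a$ to make the closure well-defined (this is exactly the ``categorical trace'' of $\psi \circ \phi$, or equivalently of $\phi \circ \psi$, and sphericality guarantees the two traces agree). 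I would first verify bilinearity, which is immediate since composition and tensor product are $\mathbb{k}$-bilinear, and then record the key identity $(\phi,\psi) = \operatorname{tr}(\psi\phi) = \operatorname{tr}(\phi\psi)$; the symmetry here is what makes the pairing genuinely a pairing between $Hom_C(X,Y)$ and $Hom_C(Y,X)$ rather than something asymmetric.

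Next I would prove nondegeneracy. Since $C$ is a semisimple (indeed fusion) category, write $X \simeq \bigoplus_i V_i^{\oplus m_i}$ and $Y \simeq \bigoplus_i V_i^{\oplus n_i}$ as sums of simple objects $V_i$. Then $Hom_C(X,Y) \simeq \bigoplus_i \operatorname{Mat}_{n_i \times m_i}(\mathbb{k})$ and $Hom_C(Y,X) \simeq \bigoplus_i \operatorname{Mat}_{m_i \times n_i}(\mathbb{k})$, using Schur's lemma ($\operatorname{End}_C(V_i) \simeq \mathbb{k}$ because $\mathbb{k}$ is algebraically closed). Under these identifications the pairing becomes, block by block, $(A,B) \mapsto d_i \operatorname{Tr}(BA)$ where $d_i = \dim V_i$ is the categorical dimension of $V_i$; the crucial point is that $d_i \neq 0$ for a simple object in a (pre)modular — in particular pivotal/spherical fusion — category, which is a standard consequence of the fact that the identity morphism of $V_i$ is nonzero and the two-sided trace computing $d_i$ cannot vanish (otherwise $V_i$ would be negligible, contradicting semisimplicity). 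Since the matrix trace pairing $(A,B)\mapsto \operatorname{Tr}(BA)$ on $\operatorname{Mat}_{n\times m}\times \operatorname{Mat}_{m\times n}$ is the standard perfect pairing, scaling each block by the nonzero scalar $d_i$ leaves it nondegenerate, and the orthogonal direct sum of nondegenerate pairings is nondegenerate.

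Finally I would address the last sentence. Rigidity of $C$ gives a canonical isomorphism $Hom_C(X^\star, Y^\star) \simeq Hom_C(Y,X)$, explicitly $f \mapsto f^\star$ (the transpose/mate), which is bijective with inverse given by double-dualizing and using the pivotal structure to identify $X^{\star\star}$ with $X$. Composing this isomorphism with the nondegenerate pairing from the previous step exhibits $Hom_C(X^\star,Y^\star)$ as the $\mathbb{k}$-linear dual of $Hom_C(X,Y)$; I would just remark that the resulting pairing $Hom_C(X,Y)\otimes Hom_C(X^\star,Y^\star)\to\mathbb{k}$ admits a clean graphical description (cap off $\phi$ with $g^\star$ forming a closed diagram), which is the form actually used later in the paper.

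The only genuinely delicate point — everything else is bookkeeping — is the nonvanishing of the categorical dimensions $d_i$ of the simple objects, since the whole nondegeneracy argument collapses without it. In a spherical fusion category this is standard (it follows, e.g., from the fact that the global dimension is nonzero in characteristic $0$, or directly from semisimplicity forcing no nonzero negligible objects), so I would cite it from the appendix's treatment of tensor categories rather than reprove it; but it is worth flagging explicitly as the load-bearing hypothesis, as it is precisely where ``premodular'' (hence fusion, hence the dimensions behave) is used.
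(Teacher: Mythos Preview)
Your proposal is correct. The paper itself does not supply a separate proof of this lemma: the entire argument is the two phrases ``by the semisimplicity of $C$'' and ``by the rigidity of $C$'' embedded in the statement, so there is nothing further to compare against. Your decomposition into isotypic blocks and reduction to the matrix trace pairing is exactly the standard way to unpack ``by semisimplicity,'' and you are right to isolate the nonvanishing of $d_i = \dim V_i$ for simple $V_i$ as the one nontrivial input (it is where sphericality and characteristic $0$ actually enter, via the fact that a spherical fusion category over such $\mathbb{k}$ has no nonzero negligible objects).
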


Define
$$\omega_{X,Y} := \Sigma_{i} \phi_{i} \otimes \phi^{i} \in Hom_{C}(X,Y) \otimes Hom_{C}(Y,X)$$
where the $\phi_{i}$'s and the $\phi^{i}$'s form a pair of an
orthonormal basis and a dual basis respectively for
$Hom_{C}(X,Y)$ and $Hom_{C}(Y,X)$ under the identification given
in \ref{lemma/dual-of-hom-space}. Graphically, we use a dummy
variable as a short-hand notation:

\begin{center}
  \includegraphics[height=5cm]{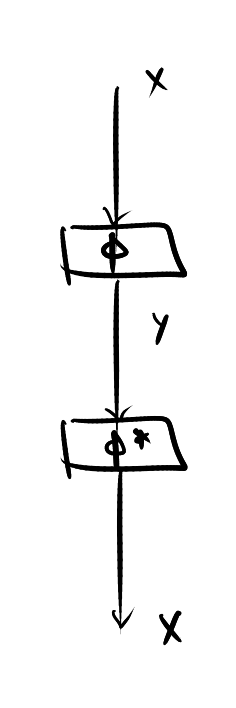}
\end{center}

\begin{lemma} \label{lemma/fundamental-lemma-of-Omega}
  Let $C$ be a premodular category, $a$ its spherical structure,
  and $W$ a $C$-object. Then
  $$1_{W} = \Sigma_{i \in \mathcal{O}(C)} \Sigma_{l} dim_{a}(i) \phi^{l} \circ \phi_{l},$$
  where the $\phi_{l}$'s and the $\phi^{l}$'s form an orthonormal basis
  and a dual basis respectively for $Hom_{C}(W,i)$ and
  $Hom_{C}(i,W)$.
\end{lemma}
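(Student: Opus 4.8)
The plan is to derive this as the standard ``resolution of the identity'' in a semisimple category, with the coefficient $dim_{a}(i)$ tracked through the pairing of \ref{lemma/dual-of-hom-space}. First, by semisimplicity of $C$ and Schur's lemma, composition is an isomorphism of $\mathbb{k}$-vector spaces $\bigoplus_{i\in\mathcal{O}(C)} Hom_{C}(i,W)\otimes_{\mathbb{k}} Hom_{C}(W,i)\xrightarrow{\ \sim\ }End_{C}(W)$, $\psi\otimes\phi\mapsto\psi\circ\phi$; in particular the elements $\phi^{l}\circ\phi_{m}$ (ranging over $i\in\mathcal{O}(C)$ and over the fixed bases of $Hom_{C}(i,W)$ and $Hom_{C}(W,i)$) form a $\mathbb{k}$-basis of $End_{C}(W)$. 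Hence $1_{W}=\sum_{i,l,m}\lambda^{i}_{l,m}\,\phi^{l}\circ\phi_{m}$ for unique scalars $\lambda^{i}_{l,m}$, and it remains to compute them.

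The key normalization is: for simple $i$ and basis indices $p,q$, one has $\phi_{p}\circ\phi^{q}=\tfrac{\delta_{p,q}}{dim_{a}(i)}\,1_{i}$. Indeed $\phi_{p}\circ\phi^{q}\in End_{C}(i)=\mathbb{k}\cdot 1_{i}$, so it equals $\mu\,1_{i}$ for some $\mu\in\mathbb{k}$; unwinding the definition of the pairing in \ref{lemma/dual-of-hom-space} identifies $(\phi_{p},\phi^{q})$ with the spherical trace $\widetilde{\mathrm{tr}}(\phi_{p}\circ\phi^{q})=\mu\cdot\widetilde{\mathrm{tr}}(1_{i})=\mu\cdot dim_{a}(i)$, while $(\phi_{p},\phi^{q})=\delta_{p,q}$ because $\{\phi^{q}\}$ is the basis dual to $\{\phi_{p}\}$. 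Since $dim_{a}(i)\neq 0$ — which is exactly what makes that pairing nondegenerate — we get $\mu=\delta_{p,q}/dim_{a}(i)$.

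To recover the coefficients, I would fix a simple $i$ and indices $p,q$ and compose the expansion of $1_{W}$ on the left with $\phi_{p}\colon W\to i$ and on the right with $\phi^{q}\colon i\to W$. On the left side this gives $\phi_{p}\circ\phi^{q}=\tfrac{\delta_{p,q}}{dim_{a}(i)}1_{i}$; on the right side every cross term with $j\neq i$ dies, and the surviving terms collapse via the normalization to $\tfrac{\lambda^{i}_{p,q}}{dim_{a}(i)^{2}}\,1_{i}$, whence $\lambda^{i}_{p,q}=dim_{a}(i)\,\delta_{p,q}$ and therefore $1_{W}=\sum_{i}\sum_{l}dim_{a}(i)\,\phi^{l}\circ\phi_{l}$, as claimed. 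Equivalently — and perhaps more cleanly — one checks directly that the $P_{i}:=dim_{a}(i)\sum_{l}\phi^{l}\circ\phi_{l}$ are mutually orthogonal idempotents (orthogonality across distinct $i$ being immediate since $Hom_{C}(j,i)=0$, idempotency following from the normalization), with $P_{i}$ the projector onto the $i$-isotypic summand of $W$, so that $\sum_{i}P_{i}=1_{W}$ by semisimplicity.

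The mathematical content here is essentially routine; the only real care needed is the first-principles translation between the abstract pairing of \ref{lemma/dual-of-hom-space} and the composition law in $End_{C}(i)$, i.e.\ pinning down the spherical-trace normalization so that the weight comes out as $dim_{a}(i)$ rather than its inverse or $dim_{a}(i)$ scaled by a global-dimension factor. A graphical rendering — the $W$-strand ``opening up'' across a summed pair of $i$-colored vertices weighted by $dim_{a}(i)$, which is precisely the dummy-variable notation $\omega_{W,i}$ introduced just before — makes both the idempotency and the vanishing across distinct $i$ visually transparent, so I would likely present the whole argument as a bubble-collapse computation in the graphical calculus, invoking \cite[Theorem 2.3.10]{kirillov/mtc} to certify the pictorial manipulations.
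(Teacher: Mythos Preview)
Your argument is correct and complete: the semisimplicity decomposition of $\End_{C}(W)$, the normalization $\phi_{p}\circ\phi^{q}=\tfrac{\delta_{p,q}}{\dim_{a}(i)}\,1_{i}$ extracted from the pairing of \ref{lemma/dual-of-hom-space}, and the coefficient computation all go through exactly as you wrote. The paper, however, does not actually prove this lemma --- it is stated as a standard fact about premodular categories and immediately recast in the graphical $\Omega$-notation, with no argument given. So there is nothing to compare against; you have simply supplied the details that the paper takes for granted.
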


\begin{notation}[$\Omega$]\label{def/Omega}
  Let $C$ be a premodular category, $a$ be its spherical
  structure, and $O(C)$ be the set of isomorphism classes of
  simple objects of $C$. We use $\Omega$ in graphics to represent
  $\oplus_{i \in \mathcal{O}(C)} dim_{a}(i) i$. We also denote
  $dim(\Omega)$ by
  $\Sigma_{i \in \mathcal{O}(C)} dim_{a}(i)^{2}$.
\end{notation}

\noindent With this shorthand notation $\Omega$, we present the
lemma graphically by

\begin{center}
  \includegraphics[height=4cm]{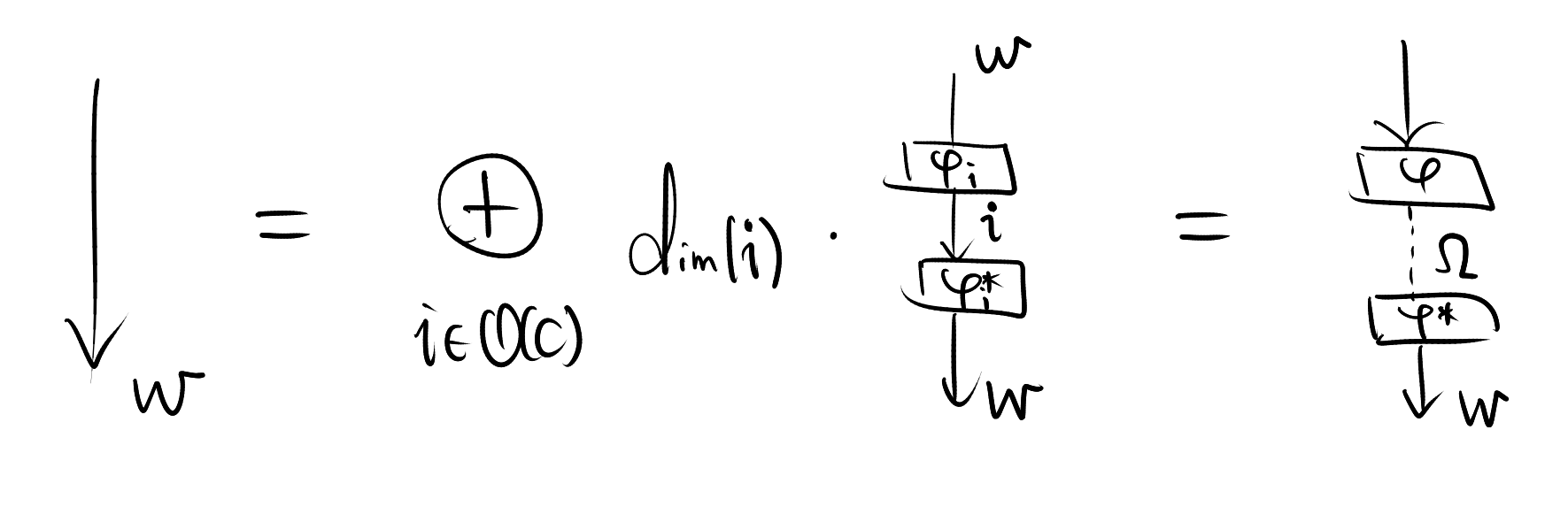}
\end{center}

\section{Topological theory} \label{section/topological-theory}

In this section, we describe the topological side of our main
statement (cf \ref{mirror/theorem/main-statement} and
\ref{remark/shape-algebra-duality}), namely the Crane-Yetter
theory in dimension two, is treated formally in terms of string
nets. This includes a definition of Crane-Yetter in dimension
two, and a combinatorial description of oriented smooth surfaces.
The former requires the notion of string nets (also called tensor
nets or tensor networks), which will be treated in
\ref{section/string-nets}. A definition of Crane-Yetter theory in
dimension two follows in
\ref{section/crane-yetter-theory-in-dimension-two}. Finally, the
combinatorial description of smooth surfaces
($\sigma$-construction) is given in
\ref{section/sigma-construction}.

\subsection{String nets} \label{section/string-nets}

Originated from Penrose combinatorial description of space-time
\cite{penrose/angularmomentum}, string nets are the building
stone of Crane-Yetter theory. They are also called (quantum)
tensor nets or tensor networks in other contexts. In dimension
two, they were first explicitly written by the physicists Levin
and Wen in \cite{levin-wen-model}. For Crane-Yetter theory,
however, we need string nets in dimension three. Following
\cite{fac-homo--kirillov-tham}, we provide a formal definition
\ref{def/string-nets-in-3d} of them in this section.

Before the formal definition, keep in mind that it aims to
formalizes the pictures of the following sort.

\begin{center}
  \includegraphics[height=9cm]{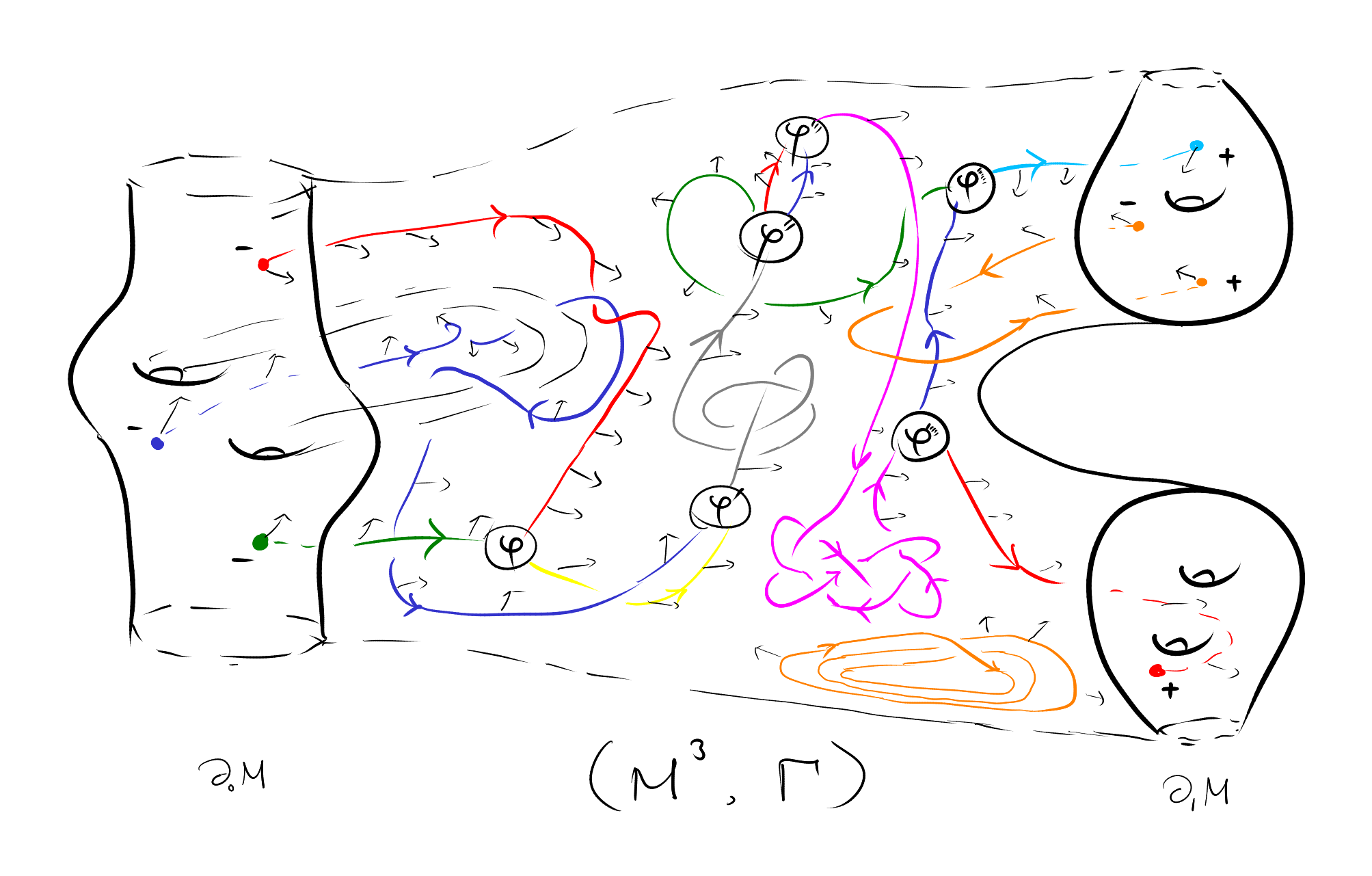}
\end{center}

\begin{definition}[$2$-folds]\label{def/2-fold-convention}
  A $2$-fold is either a compact oriented smooth manifold without
  boundary of real dimension $2$, or such a manifold with
  finitely many points removed (punctures). A $2$-fold is also
  called a surface.
\end{definition}

\begin{definition}[Extended $2$-folds]\label{def/extended-2-fold}
  An extended $2$-fold is a $2$-fold $M$ with the extra data
  $$\{(p_{1},v_{1},or_{1}) \ldots (p_{n},v_{n},or_{n})\},$$ where
  $n < \infty$, the points $p_{i} \in M$ are disjoint to each other, the
  tangent vectors $v_{i} \in T_{p_{i}}M$ are nonzero, and the
  orientations $or_{i}$ are in the set $\{+,-\}$.
\end{definition}

\begin{definition}[$3$-folds]\label{def/3-fold-convention}
  A $3$-fold is an oriented smooth manifold with boundary of real
  dimension $3$.
\end{definition}

\begin{definition}[Framed arcs in a
  $3$-fold]\label{def/framed-arcs-in-a-3-fold}
  Let $M$ be a $3$-fold (\ref{def/3-fold-convention}). An arc $\alpha$
  in $M$ is a smooth embedding of the standard interval
  $I = [0,1]$ (with orientation from $0$ to $1$) into $M$. We
  require that if an end-point is sent by $\alpha$ to the boundary
  $\partial M$, then $\alpha$ has to intersect the boundary transversally.

  A framing of an arc $\alpha$ in $M$ is a non-vanishing smooth
  section $s$ of the normal bundle of $\alpha(I) \subseteq M$. A framed arc is
  an arc equipped with a framing.
\end{definition}

\begin{remark}
  For our theory, the smoothness condition is not necessary but
  included for the sake of simplicity. Crane-Yetter theory works
  also in the PL-setting, parallel to its $3$-dimensional
  analogue, the Turaev-Viro theory. Curious readers are referred to
  the setup given in \cite{kirillov-balsam/turaev-viro-I}.

  The framing, on the other hand, is necessary. Such structure is
  expressed in slightly different way in related works. For
  example, in the context of skein modules, people use the notion
  of ribbons instead of that of arcs. The ``width'' of a ribbon
  corresponds to the normal vector from the section.
\end{remark}

\begin{definition}[Framed graphs in a
  $3$-fold]\label{def/framed-graphs-in-a-3-fold}
  Let $M$ be a $3$-fold (\ref{def/3-fold-convention}). A framed graph
  $\Gamma$ in $M$ is a finite collection of framed arcs $\alpha_{i}$
  \ref{def/framed-arcs-in-a-3-fold} satisfying the following
  conditions.

  \begin{itemize}
    \item Denote the set of arcs of $\Gamma$ by $E(\Gamma)$.
    \item The images of the embeddings $\alpha_{i}$'s do not meet each
          other, with an exception at their endpoints which must
          be in the interior of $M$.
    \item Let $v$ be a point in the interior of $M$. Denote
          $Out(v)$ ($In(v)$, resp.) be the set of the $\alpha_{i}$'s
          with $v = \alpha_{i}(0)$ ($\alpha_{i}(1)$, resp.). Denote the set
          of all arcs (edges) $E(v) := In(v) \cup Out(v)$.
          The directions of the tangent vectors of the $\alpha$'s that
          end at $v$ must be different, i.e. for each $i \neq j$,
          there is no positive real number $r$ such that
          $v_{i} = rv_{j}$. In the case where $E(v)$ is nonempty,
          we call $v$ a vertex of $\Gamma$. Denote the set of vertices
          by $V(\Gamma)$.
    \item Let $\alpha \in E(\Gamma)$. If an end of $\alpha$ ends on the boundary
          of $M$, we call the end-point a boundary point of $\Gamma$.
          Denote the set of boundary points by $B(\Gamma)$.
  \end{itemize}
\end{definition}

\begin{definition}[Extended $3$-folds]\label{def/extended-3-fold}
  An extended $3$-fold is a pair of a $3$-fold $M$ and a framed graph
  $\Gamma$ in $M$.
\end{definition}

Notice that an extended $3$-fold $(M,\Gamma)$ naturally induces an
extended $2$-fold $\partial(M,\Gamma) := (\partial M, \partial \Gamma)$, where $\partial \Gamma$ denotes the
set
$$\{(p_{i},v_{i},or_{i}) \,|\, p_{i} \in V(\Gamma) \cap \partial M\},$$
where $v_{i}$ is naturally identified via
$$N_{p_{i}} = T_{p_{i}}M/T_{p_{i}}\alpha \simeq T_{p_{i}}(\partial M)$$
with the framing vector of $\Gamma$ at the point $p_{i}$, and
$or_{i} \in \{+,-\}$ is $+$ ($-$, resp.) if the framed arc $\alpha$ that
passes through $p_{i}$ is oriented such that $\alpha(1) = p_{i}$
($\alpha(0) = p_{i}$, resp.).

\begin{center}
  \includegraphics[height=5cm]{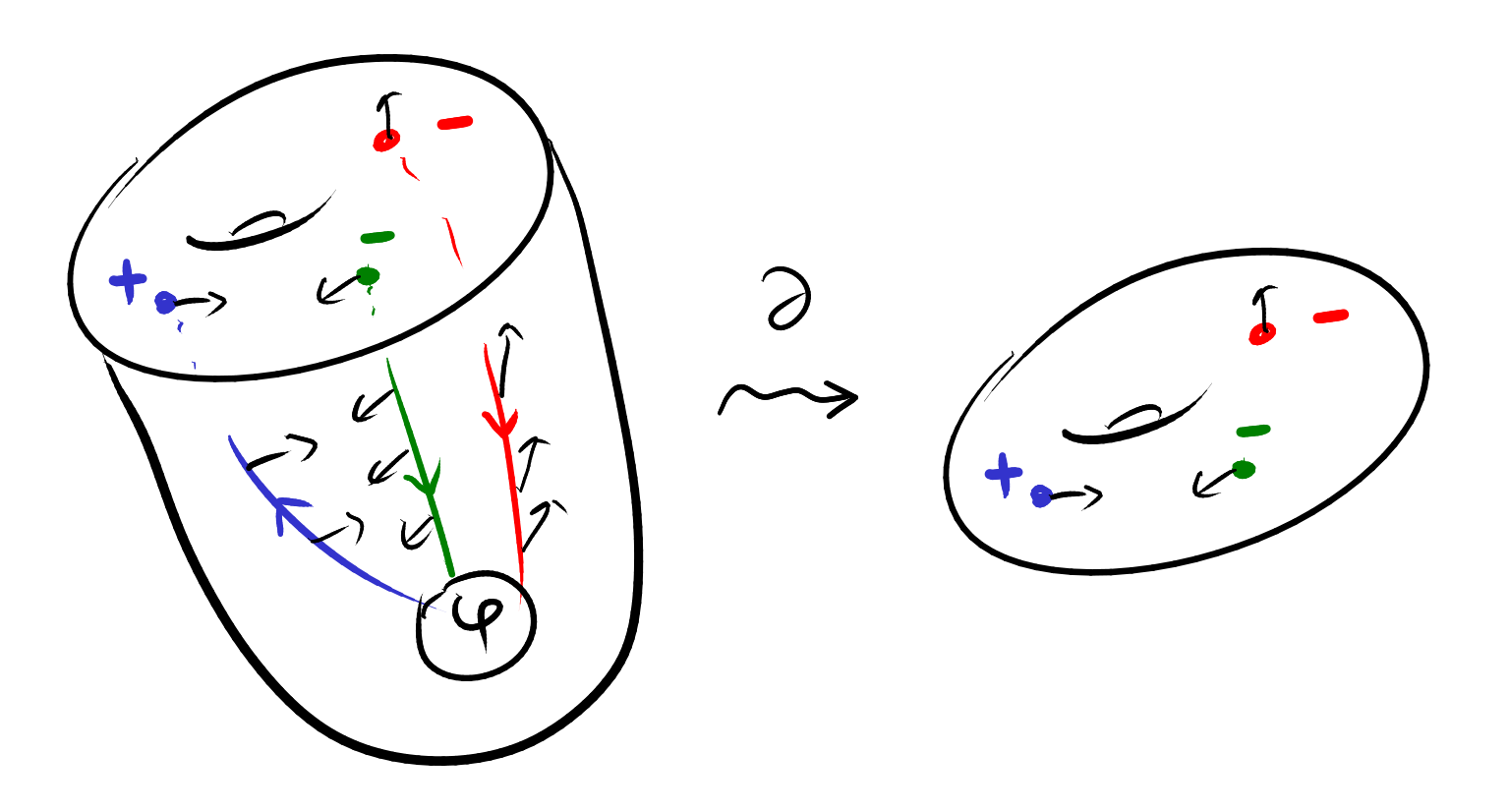}
\end{center}

\begin{definition}[$C$-extended $2$-folds]
  Given a premodular category $C$, a $C$-extended $2$-fold, or
  a $C$-colored extended $2$-fold, is an extended $2$-fold
  with an extra data: a $C$-object $X_{i}$ is assigned to each
  oriented framed point $(p_{i},v_{i},or_{i})$.
  We call $X_{i}$ the ``color'' assigned to the point $p_{i}$.
\end{definition}

To define a $C$-extended $3$-fold, we need the Reshetikhin-Turaev
theory (\cite{turaev-qiok-3-manifolds}, \cite{kirillov/mtc},
\cite{quantum-group--kassel}) for genus-$0$ surfaces which we
recall here. Let $C$ be a premodular category. To each $C$-extended
$2$-fold
$$M = (M, \{(X_{1}, p_{1}, v_{1}, or_{1}) \ldots (X_{n}, p_{n}, v_{n}, or_{n})\})$$
diffeomorphic to a sphere, the first part of Reshetikhin-Turaev
theory functorially assigns a vector space $RT(M)$ that is
(non-canonically) isomorphic to
$$\langle X_{1}^{\epsilon}, \ldots, X_{n}^{\epsilon} \rangle := Hom_{C}(\mathbb{1}, X_{1}^{\epsilon} \otimes \ldots \otimes X_{n}^{\epsilon}),$$
where $X_{i}^{\epsilon}$ denotes $X_{i}$ if $(or_{i} = +)$ or
$X_{i}^{\star}$ if $(or_{i} = -)$.

Let $C$ be a premodular category, and $V$ be a
real vector space of dimension $3$.
Let $S$ be a finite collection of distinct framed oriented rays from the
origin of $V$, with an assignment $S \xrightarrow{\phi} Obj(C)$. In
this case, we say $V$ has a finite collection of distinct $C$-colored
rays. Then the Reshetikhin-Turaev theory for genus-$0$
surfaces naturally assigns a vector space $RT_{C}(V,S,\phi)$ to
the sphere $(V-0)/\mathbb{R_{+}}$.

\begin{definition}[$C$-extended
  $3$-folds]
  Let $C$ be a premodular category, and $M$ be an extended
  $3$-fold $(M, \Gamma)$. A $C$-coloring of $(M, \Gamma)$ is an assignment
  as follows:
  \begin{itemize}
    \item To each arc $\alpha$ of $\Gamma$, assign a $C$-object $X(\alpha)$.
    \item After such assignment, to each vertex $p$ of $\Gamma$, the
          tangent space at $p$ naturally has a finite collection
          of $C$-colored rays $(S,\phi)$.
    \item The Reshetikhin-Turaev theory for genus-$0$ surfaces
          assigns a vector space $RT(p):=RT_{C}(T_{p}(M),S,\phi)$ as
          above. Note that $RT(p)$ is (non-canonically)
          isomorphic to $$Hom_{C}(\otimes X_{i}, \otimes X_{o})$$ where the
          $X_{i}$ runs through the objects assigned to all
          incoming arcs, and the $X_{o}$ runs through the objects
          assigned to all outgoing arcs.
    \item After such assignment, to each vertex $p$ of $\gamma$ assign
          a vector $v \in RT(p)$.
  \end{itemize}
  A $C$-extended $3$-fold $M$ is a $3$-fold with a $C$-colored
  graph inside. This gives the boundary $\partial M$ a $C$-extended
  surface structure. Conversely, we call such a $C$-colored graph
  a framed graph that satisfies the boundary condition posed by the
  $C$-extended surface $\partial M$.
\end{definition}

Let $C$ be a premodular category. While the first part of
Reshetikhin-Turaev theory for genus-$0$ surfaces assigns a vector
space to a $C$-extended genus $0$ surface, the second part of it
assigns a $C$-extended $3$-fold $(M,\Gamma)$ diffeomorphic to a ball
to a vector $RT(M,\Gamma) \in RT(\partial M)$.

\begin{definition}[String nets in $3$D]\label{def/string-nets-in-3d}
  Let $C$ be a premodular category and $M$ a $3$-fold whose
  boundary $\partial M$ is a $C$-extended surface. Let $F$ be the free
  vector space over $\mathbb{k}$ generated by all $C$-colored
  graphs that satisfy the boundary condition posed by $\partial M$. Let
  $N$ be the subspace generated by either of the following
  \begin{itemize}
    \item The difference $\Gamma - \Gamma'$ of two $C$-colored graphs that
          are smoothly isotopic to each other.
    \item A linear combination $v = \Sigma c_{i} \Gamma_{i}$ such that
          there exists a closed region $B \subseteq M$ diffeomorphic to a
          ball such that the vector assigned by the
          Reshetikhin-Turaev theory for genus-$0$ surfaces of
          $v|_{B}$ is the zero vector.
  \end{itemize}
  We call $S(M):=F/N$ the space of string nets of $M$ with the
  given boundary condition, and we call an element of $S(M)$ a
  string net.
\end{definition}

\noindent We conclude this section with a useful lemma.

\begin{lemma}[Sliding lemma]\label{lemma/sliding-lemma}
  Let $C$ be a premodular category. Then the following string
  nets are equal, where $\Omega$ is the shorthand notation given
  in \ref{def/Omega}. Heuristically, the moral is that $\Omega$
  protects anything ``inside'' it by making it transparent.
  \begin{center}
    \includegraphics[height=5cm]{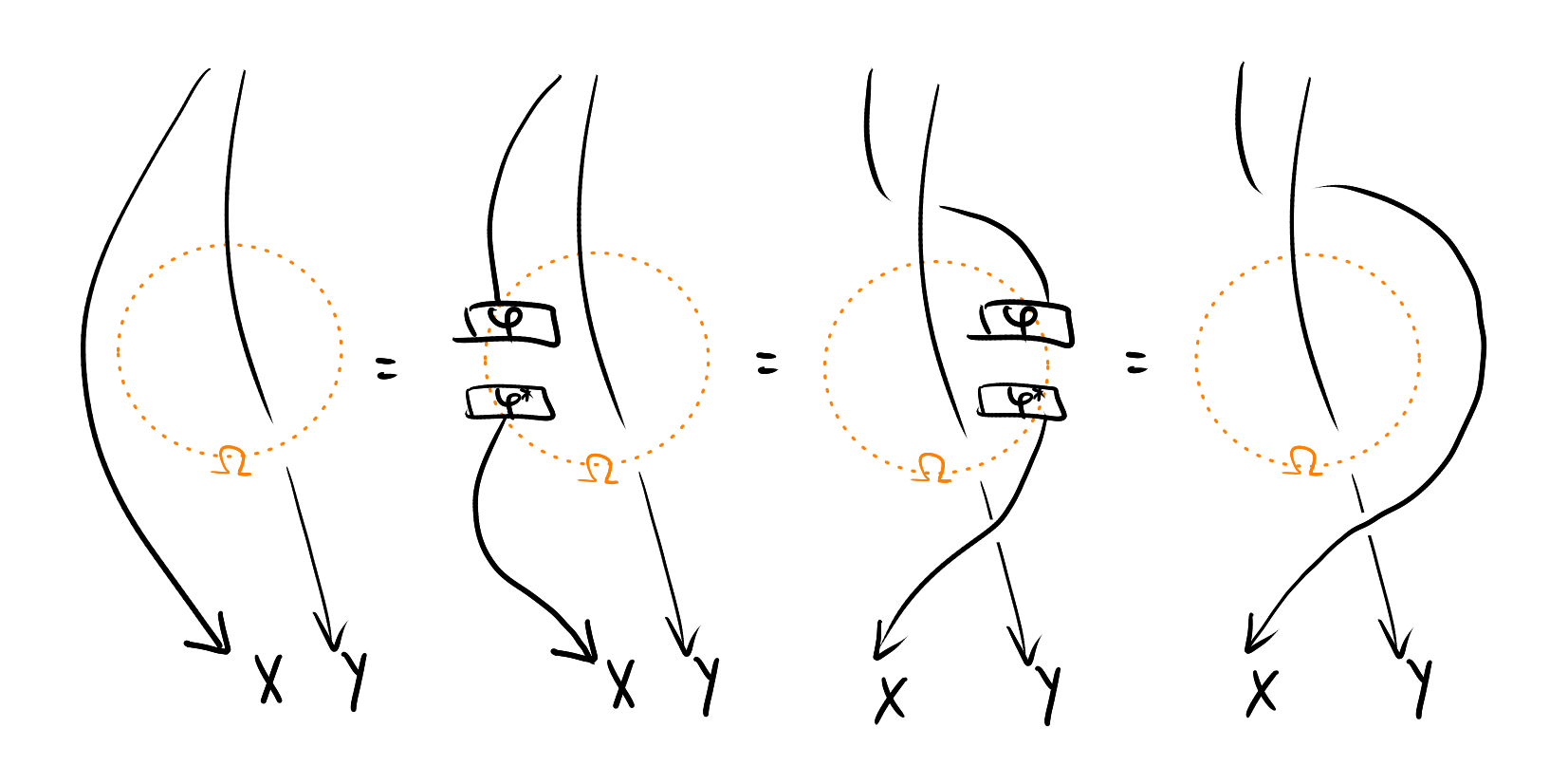}
  \end{center}
\end{lemma}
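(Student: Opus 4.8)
The plan is to reduce the claimed equality of string nets to a purely local computation inside a ball, where the string‑net relation identifies it with an equality in the Reshetikhin–Turaev vector space of a genus‑$0$ surface, hence with an equality of morphisms in $C$ provable by graphical calculus. First I would observe that both sides of the picture agree outside a small ball $B$ containing the relevant crossing of the strand through the $\Omega$‑loop; by the isotopy relation in Definition~\ref{def/string-nets-in-3d} and locality, it suffices to prove the two configurations are equal as elements of $S(B)$ with the induced boundary condition on $\partial B \simeq S^2$. Since $S(B) \simeq RT(\partial B)$ via the second part of Reshetikhin–Turaev theory, this is an equality in a Hom‑space of $C$, which I can attack by ordinary ribbon‑graph calculus in the plane.

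The computational heart is the expansion of $\Omega = \bigoplus_{i \in \mathcal{O}(C)} \dim_a(i)\, i$ together with the fundamental lemma \ref{lemma/fundamental-lemma-of-Omega}, which lets me resolve the $\Omega$‑colored loop into a sum over simples with the correct dimension weights. I would then push the strand carrying $W$ (or whatever object is ``inside'') across the loop one crossing at a time, using naturality of the braiding $c_{-,-}$ to move the $W$‑strand past each simple summand $i$, and the ribbon/twist compatibility to absorb the resulting twist factors. The key cancellation is that when one sums $\dim_a(i)\, c_{i,W} c_{W,i}$ (the full monodromy, weighted by dimension) against the counit/unit that close the $\Omega$‑loop, the Encircling/killing property of $\Omega$ in a premodular category forces the monodromy contribution to collapse to the identity on $W$ — precisely the statement that $\Omega$ ``makes things transparent.'' In graphical terms this is the standard fact that encircling a strand by $\Omega$ projects onto the transparent (Müger‑central) component, but applied strand‑by‑strand it yields exactly the asserted slide. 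I would verify the twist bookkeeping is consistent by checking it on the two generating local moves (strand passing over vs.\ under the loop) and invoking \cite[Theorem 2.3.10]{kirillov/mtc} so that the remaining identifications are isotopies.

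The main obstacle I anticipate is the \emph{framing and twist bookkeeping}: when the $W$‑strand is dragged through the $\Omega$‑loop, each elementary crossing change introduces ribbon twists $\theta_i$ on the simple summands and a compensating twist on $W$, and one must check these assemble so that the dimension‑weighted sum over $\mathcal{O}(C)$ telescopes to the identity rather than to some nontrivial endomorphism. This is delicate precisely because $C$ is only premodular, not modular, so the naive ``$\Omega$ kills everything'' intuition fails globally and survives only in the transparent sector; the lemma holds because the slide is a \emph{local} move that never actually tests non‑transparency. A secondary, more routine difficulty is making the ``closed region $B$ diffeomorphic to a ball'' in Definition~\ref{def/string-nets-in-3d} genuinely contain all of the isotoped strand, which I would handle by first isotoping the whole configuration into a standard position so that the entire $\Omega$‑loop and the crossing region lie in a single coordinate chart.
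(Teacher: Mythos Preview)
Your reduction to a local computation in a ball and your intention to use Lemma~\ref{lemma/fundamental-lemma-of-Omega} are correct starting points, but the \emph{mechanism} you propose for the actual identity is wrong, and the argument as written would fail.

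You claim the ``key cancellation'' is that the dimension-weighted sum of full monodromies $\sum_{i} \dim_a(i)\, c_{i,W}c_{W,i}$, closed off by the $\Omega$-loop, collapses to the identity on $W$ via an ``encircling/killing property.'' In a merely premodular category this is false: encircling a strand $W$ by $\Omega$ does \emph{not} give $\dim(\Omega)\cdot 1_W$ in general; rather it projects $W$ onto its transparent (M\"uger-central) part, and kills the rest. So the monodromy sum you write down does not simplify the way you need, and your hedge that the slide ``never actually tests non-transparency'' contradicts your own computation, which computes exactly that monodromy. The twist bookkeeping you flag as the main obstacle is a symptom of this: you are generating twists precisely because you are trying to push $W$ \emph{through} the loop via braidings, which is the wrong move.

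The paper's proof avoids monodromy entirely. The idea is \emph{cut and re-glue}: apply Lemma~\ref{lemma/fundamental-lemma-of-Omega} with $W = X \otimes \Omega$ at the spot where $X$ meets the $\Omega$-strand. This inserts a pair of dual coupons and effectively severs the $\Omega$-loop, fusing $X$ into it. Now $X$ is part of the loop; an isotopy (plus naturality of $c$) carries the coupons around to the other side, and a second application of Lemma~\ref{lemma/fundamental-lemma-of-Omega}, this time with $W = \Omega \otimes X$, reabsorbs the coupons and restores an intact $\Omega$-loop --- with $X$ now on the far side. No double braiding $c_{i,W}c_{W,i}$ ever appears, no $\theta_i$'s need to be tracked, and nothing about the M\"uger center is invoked. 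Rewrite your argument around this cut--isotope--re-glue pattern and the difficulties you anticipate disappear.
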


\begin{proof}
  Apply \ref{lemma/fundamental-lemma-of-Omega} locally with $W = X \otimes \Omega$.
  Use isotopy and the naturality of the braidings.
  And then apply \ref{lemma/fundamental-lemma-of-Omega} locally again with $W = \Omega \otimes X$.
\end{proof}

Notice that in fact $Y$ can be more general than an object - the
lemma works even when $Y$ is a puncture.

\subsection{Crane-Yetter theory in dimension two
  ($CY$)} \label{section/crane-yetter-theory-in-dimension-two}

We define the Crane-Yetter theory in dimension two in this
subsection, following \cite[section 5]{fac-homo--kirillov-tham}.
Let $\Sigma$ be a smooth oriented 2-manifold and $C$ a premodular
category. To define $CY_{\Sigma}(C)$, we first define an
auxiliary category $cy_{\Sigma}(C)$.

\begin{definition}[$cy_{\Sigma}(C)$, an auxiliary category]\label{def/cy-in-2d}
  Given a premodular category $C$ and a $2$-fold $\Sigma$, we define
  the $\mathbb{k}$-linear category $cy_{\Sigma}(C)$ as follows. An
  object is a collection $c$ of $C$-colored points and tangent
  vectors, such that $(\Sigma, c)$ is a $C$-extended $2$-fold. Given
  two objects $c$ and $c'$, the morphism space
  $Hom_{cy_{\Sigma}(C)}(c,c')$ between $c$ and $c'$ is defined to be
  the space of string nets for the $3$-fold $\Sigma \times [0,1]$
  satisfying the boundary condition $(\overline{c} \times \{0\}) \cup (c' \times \{1\})$,
  where $\overline{c}$ denotes the same collection of $C$-colored
  points as $c$ does but with all orientations flipped.
\end{definition}

Two examples of morphisms in $cy_{\Sigma_{1,0}}(C)$ is depicted as
follows.

\begin{center}
\includegraphics[height=3cm]{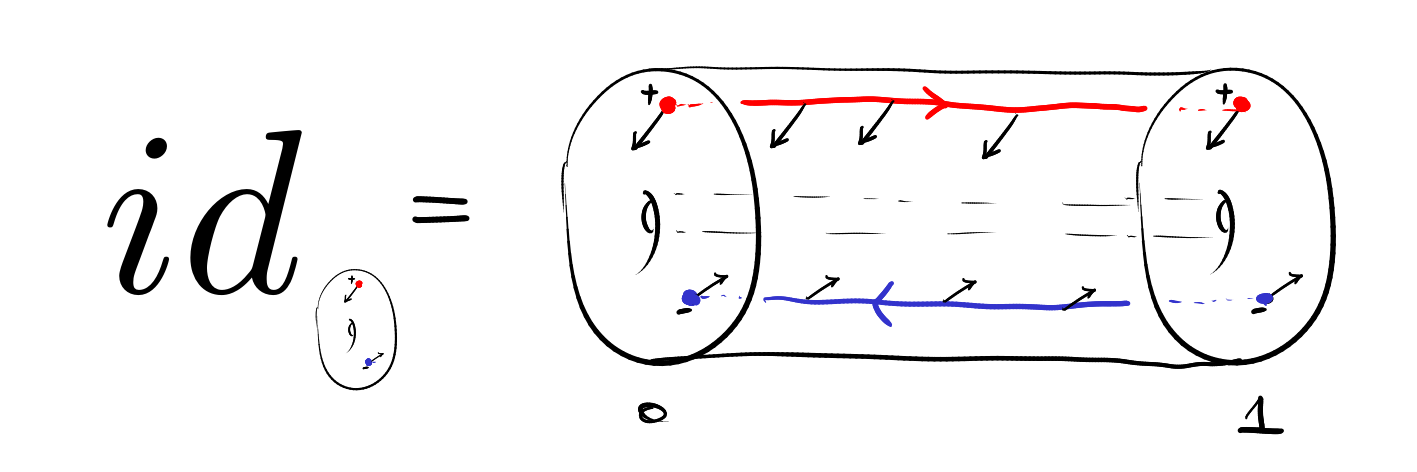}
\end{center}
\begin{center}
\includegraphics[height=3cm]{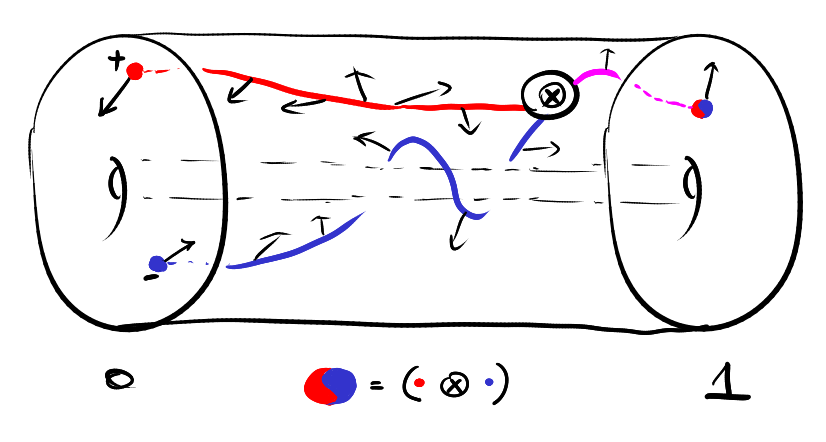}
\end{center}

\begin{definition}[Karoubi
  envelope]\label{def/karoubi-completion}
  Given an additive category $C$, its Karoubi envelope
  (Karoubi completion) $\Kar(C)$ is defined to be the category as
  follows. The objects are pairs $(X,p)$, where $X \in \Obj(C)$ and
  $p \in Hom_{C}(X,X)$, such that $p^{2}=p$. Given objects
  $\bar{X}=(X,p)$ and $\bar{Y}=(Y,q)$, the space of morphisms
  $Hom_{\Kar(C)}(\bar{X},\bar{Y})$ is defined to be the subspace
  of $Hom_{C}(X,Y)$ consisting of those $f$ such that $qfp = f$.
\end{definition}

\noindent The Karoubi envelope is the pre-abelian completion in
our context.

\begin{definition}[$CY_{\Sigma}(C)$, Crane-Yetter theory in dimension
  2]\label{def/crane-yetter-in-dimension-two}
  With the notations above, we define
  \begin{equation}
    CY_{\Sigma}(C) := \Kar(cy_{\Sigma}(C)).
  \end{equation}
\end{definition}

\begin{remark}
  The definition given in \ref{def/crane-yetter-in-dimension-two}
  was first given in \cite[section 5]{fac-homo--kirillov-tham}.
  That it extends the original Crane-Yetter theory is proved
  in \cite{tham/thesis}.
\end{remark}

It is immediate from the definition that $cy_{\Sigma}(C)$ is additive.
On the other hand, $CY_{\Sigma}(C)$ is in fact finite semisimple
abelian for all surfaces with at least one puncture (cf
\ref{thm/sigma-construction-subsumes-all-open-surfaces},
\ref{thm/cat-prop-of-Z}, \ref{main-statement}). It's conjectured
that it holds in fact for all surfaces.

\subsection{A presentation of surfaces
  ($\sigma$-construction)} \label{section/sigma-construction}

In this paper, we construct a surface $\Sigma$ from the standard disk
and an additional data $\sigma \in \Adm_{2n}$., give some examples, and
prove that such construction produces all oriented surfaces
with at least one puncture.

\begin{definition}[$Adm_{2n}$, admissible
  gluings]\label{def/adm-gluing}
  Let $n$ be a nonnegative integer. An element $\sigma$ in the
  permutation group $S_{2n}$ on $2n$ elements is called an
  admissible gluing (of rank $n$), if $\sigma$ satisfies the
  following conditions.
  \begin{itemize}
    \item $\sigma$ has no fixed points.
    \item $\sigma$ is an involution; i.e. $\sigma^{2} = 1$.
  \end{itemize}
  We denote the subset of admissible gluings by
  $\Adm_{2n} \subseteq S_{2n}$.
\end{definition}

\begin{definition}[$\Sigma_{\sigma}$, $\sigma$-construction]\label{def/sigma-construction}
  For each admissible gluing $\sigma \in \Adm_{2n}$, we construct
  a smooth surface $\Sigma_{\sigma}$. Start from the standard
  oriented disk. We choose $2n$ closed segments with the same
  length from the boundary. To make the presentation easier, we
  emphasize them by drawing them like legs (without changing the
  diffeomorphism type), and we call them legs from now on.

  \begin{center}
    \includegraphics[height=3cm]{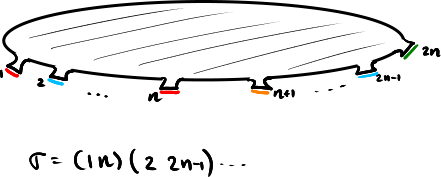}
  \end{center}

  \noindent Glue the end of the legs in pairs according to
  $\sigma$ with the orientation preserved. Finally, removed the
  boundary the the result to be an open surface. The result is
  denoted by $\Sigma_{\sigma}$.
\end{definition}

\begin{example}
  The only element $(1 2)$ in $\Adm_{2}$ constructs the cylinder
  $\Sigma_{(1 2)} \simeq \mbox{Cylinder}.$

  \noindent The elements $\sigma_{0,3} = (1 2)(3 4)$ and
  $\sigma_{1,1} = (1 3)(2 4)$ in $\Adm_{4}$ construct a 3-punctured
  sphere $\Sigma$ and a 1-punctured torus respectively.

  \begin{center}
    \includegraphics[height=4cm]{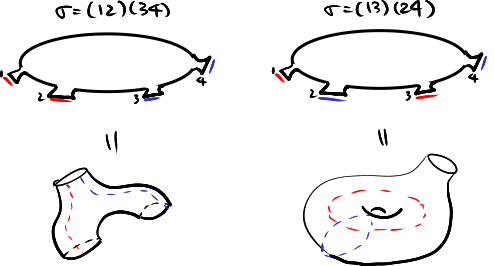}
  \end{center}
\end{example}

So constructed surfaces must have at least one puncture, thus the
procedure does not give all surfaces. However, the following
theorem shows that this is the only case it misses.

\begin{theorem}\label{thm/sigma-construction-subsumes-all-open-surfaces}
  The $\sigma$-constructions produce all oriented surfaces with
  at least one puncture (i.e. all open surfaces).
\end{theorem}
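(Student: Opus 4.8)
The plan is to prove this by appealing to the classification of surfaces with boundary and then reinterpreting boundary components as punctures. Recall that an open surface in the sense of Definition \ref{def/2-fold-convention} is a compact oriented surface with finitely many interior points removed. By the classification of compact oriented surfaces with boundary, every such surface $\Sigma$ of genus $g$ with $b\geq 0$ boundary circles and $k\geq 0$ punctures is determined up to diffeomorphism by the triple $(g,b,k)$; and removing the interior of a closed disk is the same, up to diffeomorphism, as removing a single point from that disk. Hence every open surface with at least one puncture is diffeomorphic to $\Sigma_{g}$ with $m\geq 1$ points removed, and equivalently to a compact genus-$g$ surface with $m$ open disks removed. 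So it suffices to realize, for every $g\geq 0$ and every $m\geq 1$, the surface $\Sigma_{g,m}$ (genus $g$, $m$ punctures) as some $\Sigma_{\sigma}$ with $\sigma\in\Adm_{2n}$.

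First I would set up the bookkeeping for the $\sigma$-construction. Starting from a disk with $2n$ legs and gluing the legs in pairs according to an involution $\sigma$ without fixed points produces a compact surface with boundary; removing that boundary yields $\Sigma_{\sigma}$. I would compute the topological type of this compact surface by an Euler-characteristic and boundary-counting argument: the disk with $2n$ marked boundary segments is a CW-complex (one $2$-cell, the arcs of $\partial D$ between consecutive legs and the leg-ends as $1$-cells, the endpoints as $0$-cells), and gluing legs in pairs identifies certain $1$-cells and $0$-cells. From $\chi$ of the glued complex one reads off $2-2g-b$, and tracing how the boundary arcs of $\partial D$ chain together after gluing gives the number $b$ of resulting boundary circles; then $k$ (the punctures) equals $b$ after we delete the boundary. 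The key point to extract from this computation is that as $\sigma$ ranges over $\Adm_{2n}$ (and $n$ over all nonnegative integers), the pair $(g,b)$ — equivalently $(g,k)$ with $k=b\geq 1$ always — ranges over all values with $k\geq 1$.

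Next I would exhibit explicit families of admissible gluings realizing the two "generators" of the classification, then combine them. For punctures: the identity-type gluing $\sigma_{0,3}=(12)(34)$ already gives the $3$-punctured sphere, and more generally the gluing $(12)(34)\cdots(2n-1\,\,2n)$ on a disk with $2n$ legs produces a sphere with $n+1$ punctures (each adjacent pair of legs glued "trivially" pinches off one more boundary circle); this handles all genus-$0$ surfaces with $k\geq 1$. For genus: the gluing $(13)(24)$ gives the $1$-punctured torus, the standard "add a handle" move. I would then show that concatenating these patterns — taking a disk with $2n=2(2g)+2(k-1)$ legs, gluing $2g$ of them in the crossed pattern $(1\,3)(2\,4)(5\,7)(6\,8)\cdots$ to build genus $g$ and the remaining $2(k-1)$ in the trivial adjacent pattern to split off $k$ punctures — yields $\Sigma_{g,k}$ for every $g\geq 0$, $k\geq 1$. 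Checking that these concrete $\sigma$ give the claimed $(g,k)$ is exactly the Euler-characteristic/boundary count from the previous paragraph applied to the explicit permutation.

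The main obstacle I anticipate is the boundary-component count: Euler characteristic alone pins down $2-2g-b$ but not $g$ and $b$ separately, so the bulk of the work is the combinatorial analysis of how the arcs of $\partial D$ (the "non-leg" parts of the disk boundary) are cyclically chained together by $\sigma$ into $b$ circles, and verifying orientability is preserved (which holds because we glue legs with orientation preserved, so no orientation-reversing identifications are introduced). A clean way to organize this is to encode the glued boundary as a permutation built from $\sigma$ together with the cyclic "next arc" permutation on leg-ends, and identify $b$ with its number of cycles; I would then verify by direct inspection that the explicit families above have the right cycle count. Once the explicit realizations are in hand and the count is verified, the theorem follows since every open surface with a puncture is some $\Sigma_{g,k}$ with $k\geq 1$.
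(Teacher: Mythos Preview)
Your approach is essentially the same as the paper's: exhibit the explicit admissible gluings $(13)(24)(57)(68)\cdots$ to build genus and append adjacent transpositions $(\,\cdot\,\,\cdot{+}1\,)$ to add punctures, then invoke the classification of oriented surfaces. The paper simply asserts the topological type of each $\Sigma_{\sigma_{g,k}}$ from a picture, whereas you propose to verify it via an Euler-characteristic computation together with a cycle count on the induced boundary permutation; this is a welcome addition of rigor but not a different strategy.
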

\begin{Proof}
  Indeed, the admissible gluing $\sigma_{1,1} = (13)(24) \in \Adm_{4}$
  gives an once-punctured torus $\Sigma_{\sigma_{1,1}}$. Similarly, the
  admissible gluing
  \[
    \sigma_{2,1} = \sigma_{1,1} \circ (57)(68) = (13)(24)(57)(68) \in \Adm_{8}
  \]
  gives an once-punctured surface of genus two. Following this
  fashion, for any $g \in \mathbb{N}$ one can construct an
  once-punctured surface of genus $g$ by using the admissible
  gluing
  \[
    \sigma_{g,1} = (13)(24)(57)(68) \ldots ((4g-3)(4g-1))((4g-2)(4g)) \in \Adm_{4g}.
  \]

  \begin{center}
    \includegraphics[height=10cm]{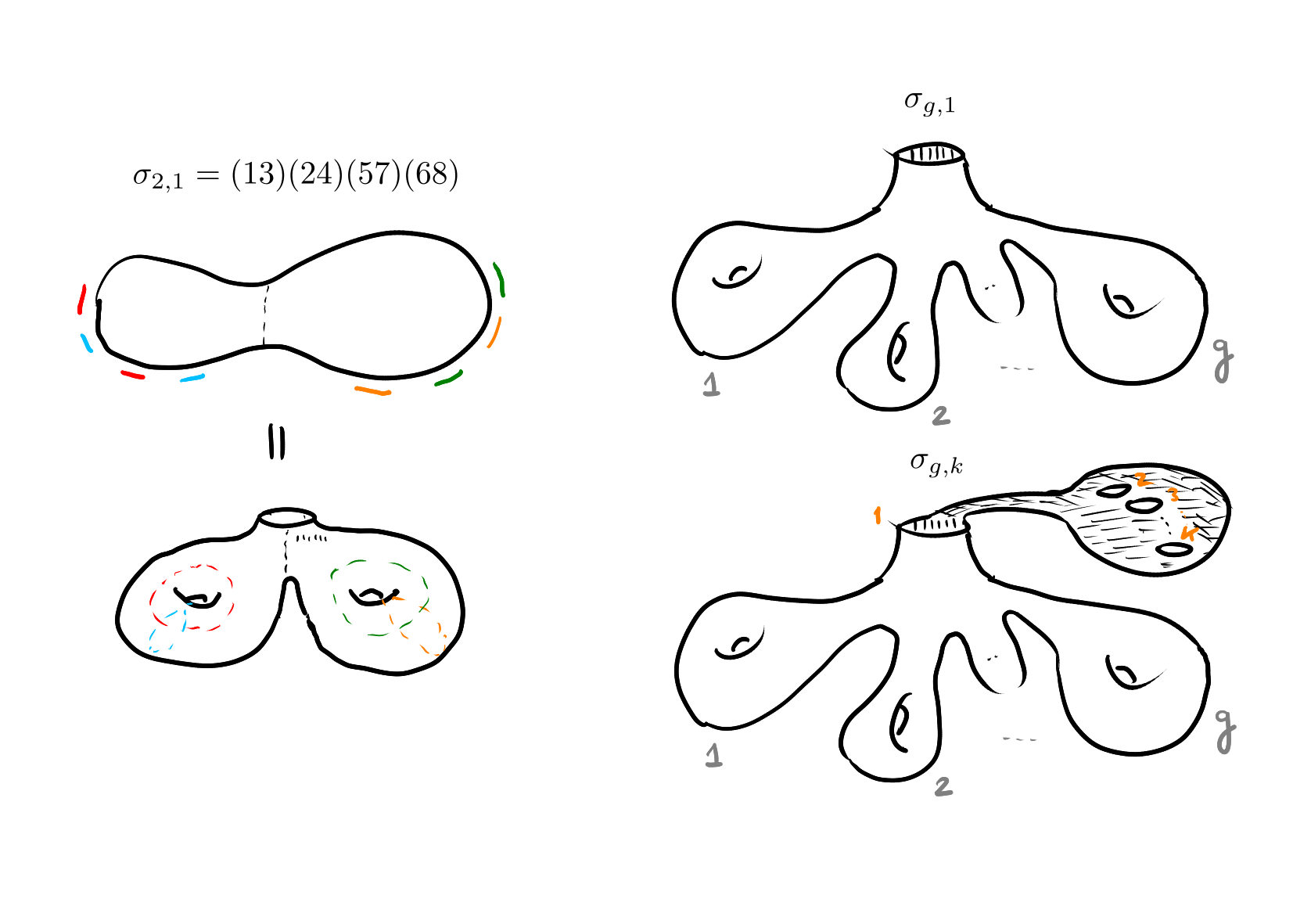}
  \end{center}

  \noindent To add $k$ punctures to the surface, use the
  admissible gluing
  \[
    \sigma_{g,(k+1)} = \sigma_{g,1} \circ ((4g+1)(4g+2))((4g+3)(4g+4)) \ldots ((4g+2k-1)(4g+2k)) \in \Adm_{4g+2k}.
  \]
  Then the statement follows from the well-known classification
  of oriented smooth surfaces.
\end{Proof}

\begin{notation}[$\sigma$-orbit]\label{notation/sigma-orbit}
  We take this opportunity to introduce a later useful notation.
  Let $\sigma \in \Adm_{2n}$. Denote $[i]$ to be the orbit
  of $$i \in \{ 1, 2 \ldots, 2n \}$$ under the action of $\sigma$, $[i]'$ the
  smaller number in the set $[i]$, and $[i]''$ the larger number
  in the set $[i]$. Note that the set $\{[1], [2], \ldots, [2n] \}$
  has exactly $n$ elements. \\

  \noindent As an example, for $\sigma = (13)(24)$, we have
  \begin{align*}
    [1] &= \{1, 3\}, \quad [1]' = 1, \quad [1]'' = 3; \\
    [2] &= \{2, 4\}, \quad [2]' = 2, \quad [2]'' = 4; \\
    [3] &= \{1, 3\}, \quad [3]' = 1, \quad [3]'' = 3; \\
    [4] &= \{2, 4\}, \quad [4]' = 2, \quad [4]'' = 4.
  \end{align*}
\end{notation}

\section{Algebraic theory} \label{section/algebraic-theory}

In this section, we describe the algebraic side of our main
statement (cf \ref{mirror/theorem/main-statement} and
\ref{remark/shape-algebra-duality}), namely the categorical
center of higher genera $Z$.

Its definition is quite algebraic and abstract, so some
motivation is supplemented in
\ref{section/motivation-for-center}. The formal definition is
given in \ref{section/categorical-center-of-higher-genera}.
Finally, some basic properties of $Z$ are proved in
\ref{subsection/properties-of-Z}. In particular, we show that $Z$
is finite abelian semisimple, and that there is a strictly
ambidextrous adjunction between $Z = Z(C)$ and the underlying
premodular category $C$.

\subsection{Motivation: Drinfeld categorical center} \label{section/motivation-for-center}

Abstract algebraic theories (groups, rings, modules.. etc) are
ubiquitous in modern mathematics. Among the algebraic objects,
the abelian ones are simpler, and are often first treated. One
then builds the theory toward the generic cases. In group theory,
for example, one can study a group $G$ by starting with its
center $Z(G) \subseteq G$ and then apply induction.

Drinfeld's categorical center is an analogue in the categorical
setting. There, algebras are replaced by categorical algebras
(more precisely, by monoidal categories \cite{egno/tensor-cats}),
and centers are replaced by categorical centers. As in the
classical theory, the theory of the one side helps that of the
other.

In contrast to the classical case, categorical centers need not
be smaller nor easier. This is due to the fact that equalities
are replaced by equivalences in the categorical settings.
Therefore, the condition $ab=ba$ is replaced by $ab \simeq ba$. That
is to say, a categorical commutativity not only remembers both
sides being identified, but also \textbf{how} they are
identified. Therefore, a typical object in the Drinfeld center
$Z(C)$ is a pair $(X \in \Obj(C), \gamma)$, where $\gamma$ is a half-braiding
that encodes how $X$ commutes with all the others. To be more
precise, a \textbf{half-braiding} $\gamma$ of $X$ is a natural
equivalence
$$(-) \otimes X \xrightarrow{\gamma} X \otimes (-)$$
satisfying some compatibility conditions \ref{def/half-braiding}.
It is worthwhile to mention that such construction has been
successful in many contexts, e.g. representation theory,
statistical physics, knot theory, .. etc.

Categorical center of higher genera
$Z = Z_{\sigma}(C) = Z_{\Sigma_{\sigma}}(C)$, on the other hand, generalizes the
Drinfeld center. Instead of remembering how $X$ commutes with
others, an object $(X,\gamma)$ remembers how $X$ commutes in
\textbf{multiple} different ways. The amount of ways depends on
the underlying surface $\Sigma = \Sigma_{\sigma}$. Therefore, an object of
$Z_{\sigma}(C)$ is a pair $(X,\gamma)$, where $\gamma$ is a collection of
half-braidings
$$
\gamma = \{ \gamma_{1}, \gamma_{2}, \ldots, \gamma_{n} \}.
$$
However, extra conditions must be carefully imposed in order to
keep track of the underlying topological data. In contrast to the
case of Drinfeld center, multiple half-braidings give essentially
infinite ways to fuse via tensors, e.g.
$\gamma_{1}\gamma_{2}\gamma_{2}\gamma_{1}\gamma_{1}\gamma_{1}\gamma_{2} \ldots$. Therefore, suitable
commutative relations among the half-braidings are needed. This
is given in the formal definition of $Z_{\sigma}(C)$ as \texttt{(:comm
  1)}, \texttt{(:comm 2)}, and \texttt{(:comm 3)} (cf
\ref{def/comm-relation}).

Before moving on to the formal definition of $Z_{\sigma}(C)$, let us
remark on the premodular condition on $C$. As a classical
analogue, it does not make sense to talk about the center $Z(S)$
for a set $S$; one needs a few extra structures on $S$. In the
categorical setting, in order to define the Drinfeld center,
merely a plain category $C$ is not enough. Essentially, a
monoidal structure is required. Similarly, for categorical
centers of higher genera, we need essentially the braided
structures (cf \ref{def/braided-category}), which are included in
the premodular condition. Note that we will assume premodularity
for other purposes, but the categorical center of higher genera
can certainly be defined for other less restricted categories.

\subsection{Categorical center of higher genera
  ($Z$)} \label{section/categorical-center-of-higher-genera}

In this section, we formally define the categorical center of
higher genera $Z_{\sigma}(C)$ for a premodular category $C$ and an
admissible gluing $\sigma \in \Adm_{2n}$. Assume $C$ to be a premodular
category throughout this section.

\begin{definition}[$\sigma$-pair]\label{def/sigma-pair}
  Let $\sigma \in \Adm_{2n}$, i.e. $\sigma$ an admissible gluing. Define a
  $\sigma$-pair of $C$ to be a pair $(X,\gamma)$, where $X$ is a $C$-object
  and $\gamma$ is a set of half-braidings for $X$ (cf
  \ref{def/half-braiding}).
  $$
  \gamma = \{ \gamma_{[1]}, \gamma_{[2]}, \ldots, \gamma_{[2n]} \}
  $$
  satisfying pairwise commutative relations in
  \ref{def/comm-relation}.
\end{definition}

\begin{definition}[\texttt{(:comm)}, technical commutative
  relations]\label{def/comm-relation}
  Let $Z_{1}$ and $Z_{2}$ be objects in $C$, and $c$ be the
  braided structure of $C$ (so
  $a \otimes b \xrightarrow{c{a,b}} b \otimes a$). Given $[i]$ and
  $[j]$, there are three possible cases without loss of
  generality

  \begin{itemize}
    \item $[i]' < [i]'' < [j]'  < [j]''$
    \item $[i]' < [j]'  < [i]'' < [j]''$
    \item $[i]' < [j]'  < [j]'' < [i]''$
  \end{itemize}

  \begin{center}
    \includegraphics[height=5cm]{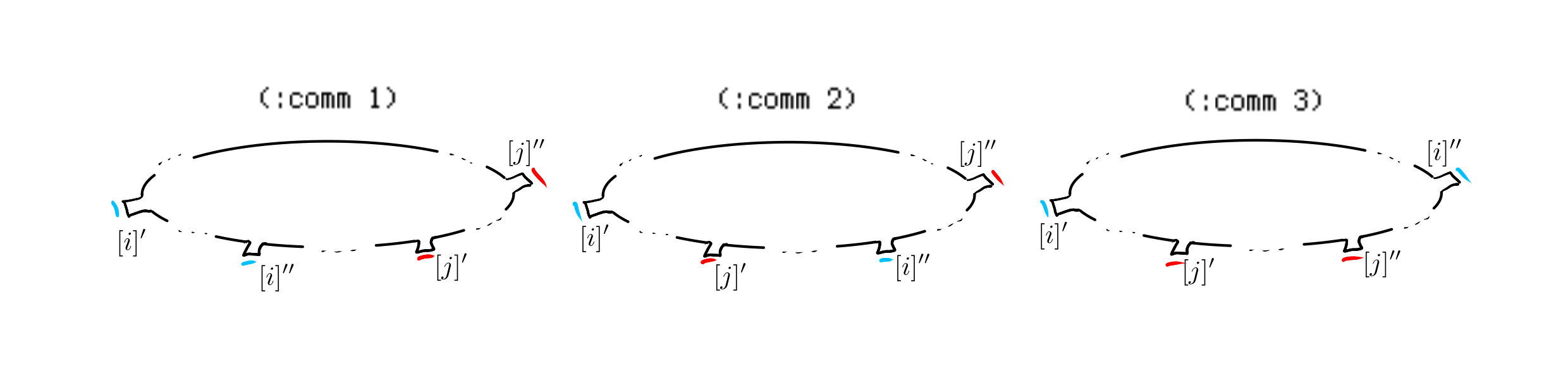}
  \end{center}

  We will give the technical conditions that the $\gamma$'s
  should obey, following by their graphical versions.

  \noindent \textbf{(1)} In the first case, $\gamma_{{[i]}}$ and
  $\gamma_{[j]}$ are required to satisfy the following commutative
  relation \texttt{(:comm 1)}, functorial in $Z_{1}$ and $Z_{2}$.

  \begin{align} \label{def/algebraic-comm-1}
    &\,(\gamma_{[j], Z_{2}} \otimes 1) (1 \otimes (c_{Z_{1},X} c_{X,Z_{1}} \gamma_{[i], Z_{1}})) \\
    =
    &\,(1 \otimes c_{Z_{1}, Z_{2}}) ((c_{Z_{1},X} c_{X,Z_{1}} \gamma_{[i], Z_{1}}) \otimes 1) (1 \otimes \gamma_{[j], Z_{2}}) (c_{Z_{1},Z_{2}}^{(-1)} \otimes 1)
  \end{align}

  \noindent \textbf{(2)} In the second case, $\gamma_{{[i]}}$ and
  $\gamma_{[j]}$ are required to satisfy the following commutative
  relation \texttt{(:comm 2)}, functorial in $Z_{1}$ and $Z_{2}$.

  \begin{align} \label{def/algebraic-comm-2}
    &\,(\gamma_{[j], Z_{2}} \otimes 1) (1 \otimes \gamma_{[i], Z_{1}}) \\
    =
    &\,(1 \otimes c_{Z_{2}, Z_{1}}^{(-1)}) (\gamma_{[i],Z_{1}} \otimes 1) (1 \otimes \gamma_{[j], Z_{2}}) (c_{Z_{1},Z_{2}}^{(-1)} \otimes 1)
  \end{align}

  \noindent \textbf{(3)} In the third case, $\gamma_{{[i]}}$ and
  $\gamma_{[j]}$ are required to satisfy the following commutative
  relation \texttt{(:comm 3)}, functorial in $Z_{1}$ and $Z_{2}$.

  \begin{align} \label{def/algebraic-comm-3}
    &\,(\gamma_{[j], Z_{2}} \otimes 1) (1 \otimes \gamma_{[i], Z_{1}}) \\
    =
    &\,(1 \otimes c_{Z_{1}, Z_{2}}) (\gamma_{[i],Z_{1}} \otimes 1) (1 \otimes \gamma_{[j], Z_{2}}) (c_{Z_{1},Z_{2}}^{(-1)} \otimes 1)
  \end{align}

  \noindent Notice that the first and the third case are almost
  the same, which is not surprising given their topological
  meaning. To make them look alike,
  define $$\tilde{\gamma}_{[i],\mbox{-}} = c_{\mbox{-},X}c_{X,\mbox{-}}\gamma_{[i],\mbox{-}}.$$

  \begin{center}
    \includegraphics[height=9cm]{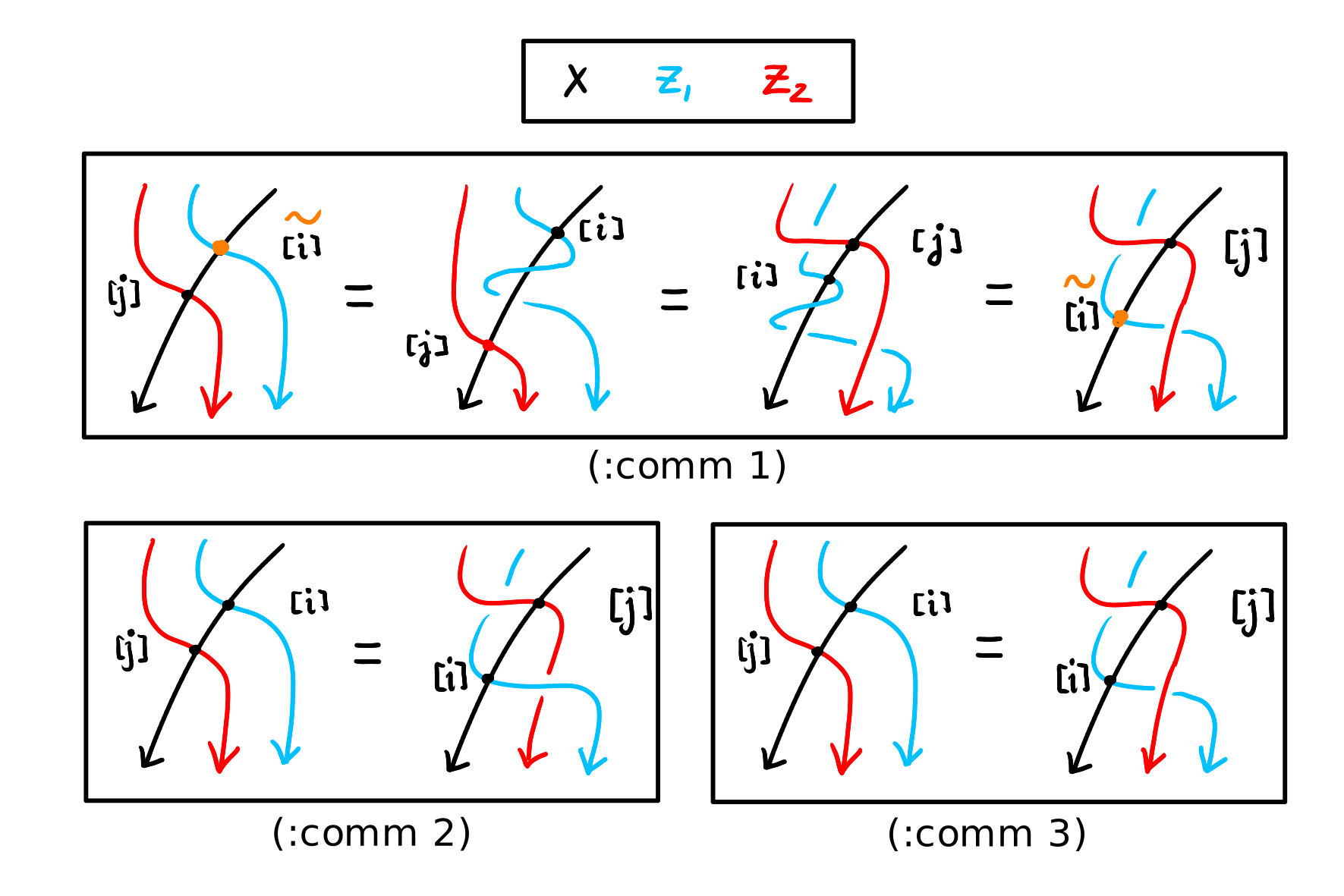}
  \end{center}
\end{definition}

\begin{definition}[$\sigma$-morphism]\label{def/sigma-morphism}
  Given two $\sigma$-pairs $\bar{X} = (X, \gamma)$ and $\bar{Y} = (Y, \beta)$
  of $C$, define $[\bar{X},\bar{Y}]_{\sigma}$ to be the linear
  subspace of $Hom_{C}(X,Y)$ consisting of the morphisms
  $(X \xrightarrow{f} Y)$ compatible with all the half-braidings
  in the following sense. For any $Z \in C$, we have (functorially
  in $Z$)

  \begin{equation} \label{def/sigma-morphism-relation}
    \beta_{Z} (1 \otimes f) = (f \otimes 1) \gamma_{Z}.
  \end{equation}

  \begin{center}
    \includegraphics[height=5cm]{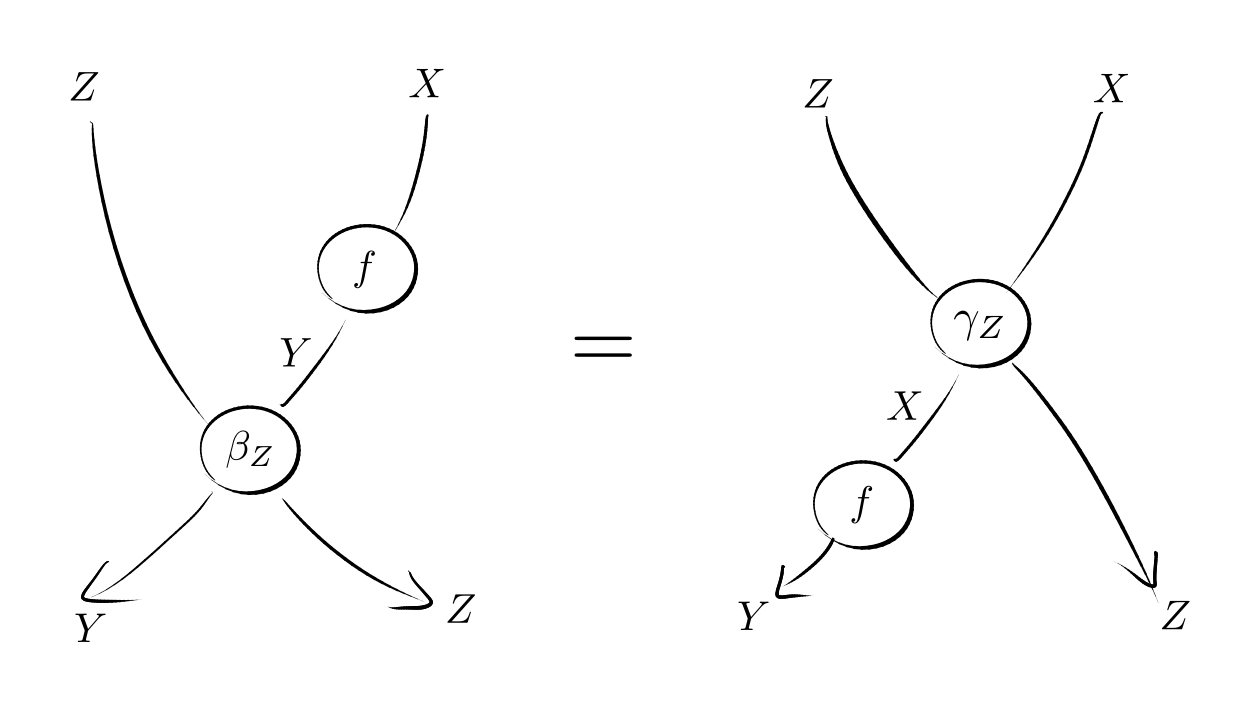}
  \end{center}

  \noindent Finally, let the identity maps and the compositions
  be inherited from that of $C$.
\end{definition}

\begin{definition}[categorical center of higher genera]\label{def/categorical-center-of-higher-genera}
  Let $C$ be a premodular category, and $\sigma \in \Adm_{2n}$ an
  admissible gluing. The categorical center of higher genera
  $Z_{\sigma}(C)$ of $C$ is defined to be the category with objects
  the $\sigma$-pairs of $C$ and with morphisms the $\sigma$-morphisms.
\end{definition}

\subsection{Properties of $Z$} \label{subsection/properties-of-Z}

In this section, we establish some basic properties of
categorical centers of higher genera. In particular, we show that
they are finite semisimple abelian categories, and that there is
a strictly ambidextrous adjunction between it and the underlying
premodular category $C$.

\subsubsection{Connecting functors}

In this subsection, we establish the relation between $C$ and its
categorical center of higher genera $Z_{\sigma}(C)$, where $C$ is a
premodular category and $\sigma$ is an admissible gluing. More
precisely, there exist two additive functors $I_{\sigma}$ and $F_{\sigma}$.
$$ I_{\sigma} : C \rightleftarrows Z_{\sigma}(C) : F_{\sigma}. $$

We will see that $I_{\sigma}$ is both a right and a left adjoints of
$F_{\sigma}$ (thus vice versa) in section
\ref{subsection/F-and-I-are-ambidextrous-adjoint}. Such a pair of
adjunction is called a (strictly) ambidextrous adjunction in the
literature.

\begin{definition}[forgetful functor]\label{def/sigma-forgetful-functor}
  The forgetful functor
  $$C \xleftarrow{F_{\sigma}} Z_{\sigma}C$$
  is defined to send objects $(X,\gamma)$ to $X$, and to send
  morphisms by inclusion (recall that the morphism space of
  $Z_{\sigma}(C)$ is defined as a subspace of that of $C$).
  Clearly, it is an additive functor.
\end{definition}

\begin{definition}[induction functor]\label{def/sigma-induction-functor}
  The induction functor
  $$C \xrightarrow{I_{\sigma}} Z_{\sigma}(C)$$
  is more complicated, so will be defined step-by-step. Define
  $I_{\sigma}(X)$ to be $(X_{\sigma}, \gamma)$, which is given below.

  \noindent Let $\mathcal{O}(C)$ be the set of isomorphism
  classes of simple objects of $C$, and $o(C)$ a set of
  representatives. To each $C$-object $X$, define another
  $C$-object
  \begin{equation}\label{def/x-sigma}
    X_{\sigma} := \bigoplus_{o(C)^{n}} (X_{1} \otimes X_{2} \otimes \ldots \otimes X_{n} \otimes X \otimes X_{n+1} \otimes \ldots \otimes X_{2n-1} \otimes X_{2n}),
  \end{equation}
  where $X_{k}$ runs through $o(C)$ if $k = [k]'$ or
  $X_{k} = X_{([k]')}^{\star}$ if $k = [k]''$.
  Here, recall that the first case means that $k$ is the smaller
  member in the set $[k]$, while the second case means that $k$
  is the larger member. For example,
  $$X_{(1 2)} = \oplus_{X \in o(C)} X \boxtimes X^{\star}.$$
  Notice that it does not depend on the choice of $o(C)$ up to
  canonical isomorphisms. Similarly, neither does $X_{\sigma}$ for
  general admissible gluings $\sigma$.

  Next, to each $k$, we define the $[k]$-th half-braiding
  $\gamma_{[k]}$ for $X_{\sigma}$
  \begin{equation} \label{def/induced-half-braiding}
    (-) \otimes X_{\sigma} \xrightarrow{\gamma_{[k]}} X_{\sigma} \otimes (-)
  \end{equation}
  as in the following picture.
  \begin{center}
    \includegraphics[height=5cm]{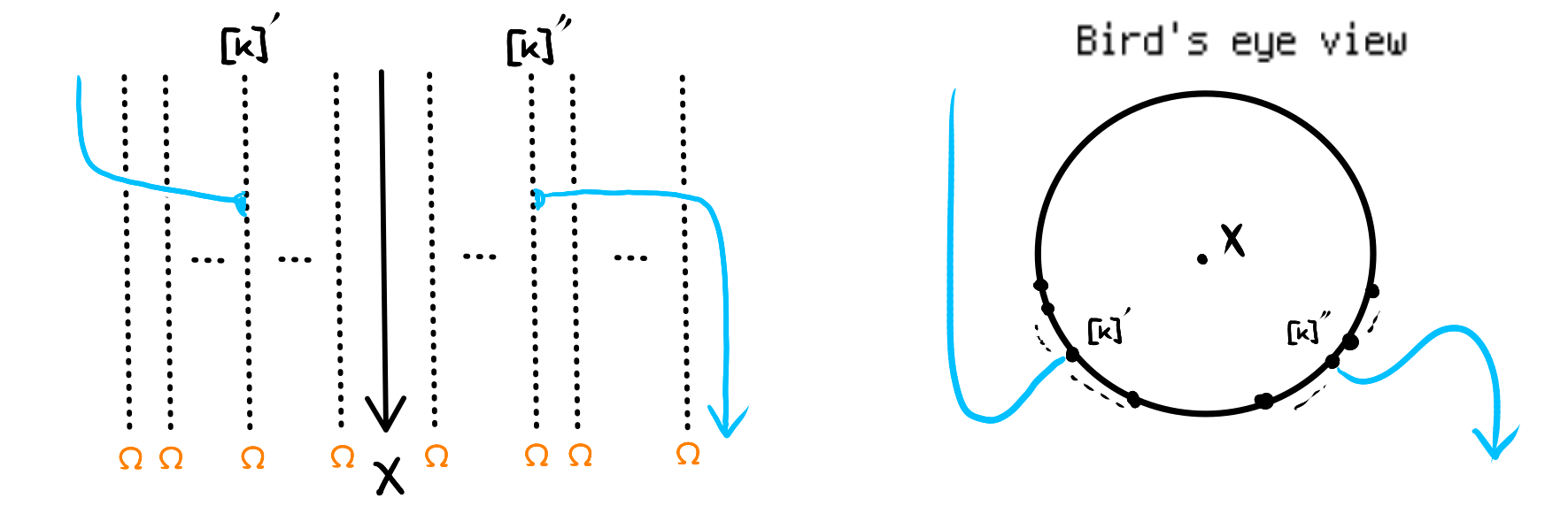}
  \end{center}
  where the nontrivial intertwiners are given precisely in
  \ref{lemma/fundamental-lemma-of-Omega}.

  We contend that so given $(X_{\sigma}, \gamma)$ is indeed an object of
  $Z_{\sigma}(C)$. One needs to show that each element in $\gamma$ is a
  half-braiding, and that $\gamma$ satisfies the commutative relations
  (\ref{def/comm-relation}). A proof of this can be found in
  (\ref{lemma/induced-half-braiding-satisfies-comm-relations}).
  It remains to define the map on the morphism spaces
  $Hom_{C}(X,Y)$. Given a morphism $X \xrightarrow{f} Y$, one
  defines
  \begin{equation}\label{def/induced-arrow}
    I_{\sigma}(f) := \overline{f} := \id^{\otimes n} \otimes f \otimes \id^{\otimes n}.
  \end{equation}

  \noindent To conclude, it remains to show that
  \begin{itemize}
    \item The morphism $\overline{f}$ is compatible with the
          half-braidings $\gamma$ and $\beta$.
    \item The construction $(\overline{-})$ preserves the
          identities and the compositions.
  \end{itemize}

  \noindent The first point is shown in
  (\ref{lemma/induced-arrow-respect-half-braidings}). The second
  point is clear.
\end{definition}

\subsubsection{Ambidextrous adjunction} \label{subsection/F-and-I-are-ambidextrous-adjoint}

\noindent Both functors $F_{\sigma}$ and $I_{\sigma}$ are additive
immediately by definition. We are ready to state and prove the
main statement of this subsection.

\begin{theorem}\label{theorem/F-and-I-are-ambidextrous-adjoint}
  The functors
  $$ I_{\sigma} : C \rightleftarrows Z_{\sigma}(C) : F_{\sigma} $$
  so defined in \ref{def/sigma-forgetful-functor} and
  \ref{def/sigma-induction-functor} are (strictly) ambidextrous
  adjoint to each other. In other words, $I_{\sigma}$ is both a
  left adjoint and a right adjoint of $F_{\sigma}$, thus vice
  versa.
\end{theorem}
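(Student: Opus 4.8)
The plan is to establish the two adjunctions $I_\sigma \dashv F_\sigma$ and $F_\sigma \dashv I_\sigma$ by constructing explicit unit and counit natural transformations and verifying the triangle identities. Since the forgetful functor $F_\sigma$ is the simpler of the two, I would first pin down the hom-space isomorphisms. For the adjunction $F_\sigma \dashv I_\sigma$, I need a natural isomorphism
$$ \operatorname{Hom}_{Z_\sigma(C)}(\bar X, I_\sigma Y) \;\simeq\; \operatorname{Hom}_C(F_\sigma \bar X, Y), $$
and for $I_\sigma \dashv F_\sigma$ a natural isomorphism
$$ \operatorname{Hom}_{Z_\sigma(C)}(I_\sigma Y, \bar X) \;\simeq\; \operatorname{Hom}_C(Y, F_\sigma \bar X). $$
The right-hand sides are honest hom-spaces in $C$; the left-hand sides are the subspaces of $C$-morphisms cut out by compatibility with all the half-braidings in $\gamma$ (Definition \ref{def/sigma-morphism}). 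So the crux is: given a $C$-morphism $X \to Y$ (resp. $Y \to X$), produce canonically a $\sigma$-morphism into (resp. out of) $I_\sigma$ of the other object, and show these assignments are mutually inverse bijections, natural in both variables.

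The key structural input is the explicit form of $X_\sigma$ in \eqref{def/x-sigma} as a direct sum over $o(C)^n$ of tensor products $X_1 \otimes \cdots \otimes X_n \otimes X \otimes X_{n+1}^\star \cdots$, together with the fact that the induced half-braiding $\gamma_{[k]}$ is built from the intertwiners of the fundamental lemma \ref{lemma/fundamental-lemma-of-Omega}. The right picture to keep in mind is that $X_\sigma$ is "$X$ decorated with $n$ copies of $\Omega$-strands that wrap around it," and the half-braidings $\gamma_{[k]}$ record how a test object $Z$ slides past the $k$-th such $\Omega$-strand. By the sliding lemma \ref{lemma/sliding-lemma}, an $\Omega$-strand makes whatever it encircles transparent, and this is precisely what forces a $\sigma$-morphism $\bar X \to I_\sigma Y$ to "factor through" the undecorated $X$-strand. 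Concretely: I would use the counit-type map $I_\sigma F_\sigma \bar X \to \bar X$ obtained by capping off each $\Omega$-loop (using $\operatorname{coev}$/$\operatorname{eval}$ and the dimension normalization in $\Omega$), and the unit-type map $Y \to F_\sigma I_\sigma Y$ given by the inclusion of the $\mathbb{1}$-summand (where all $X_k = \mathbb{1}$); then the hom-space bijection follows by showing that composing a general $\sigma$-morphism with these is inverse to whiskering a $C$-morphism with $\overline{(-)}$ from \eqref{def/induced-arrow}. The ambidexterity — that the same $I_\sigma$ serves on both sides — should come out of the self-duality built into $X_\sigma$: the summand indexed by $(X_1,\dots,X_n)$ is dual to the one indexed by $(X_1^\star,\dots,X_n^\star)$, and the spherical structure makes the two cappings (top vs. bottom) agree up to the chosen normalization.

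I would organize the verification as follows. First, define the four natural transformations (units $\eta, \eta'$ and counits $\varepsilon, \varepsilon'$) explicitly as string nets, checking that $\eta$ and $\varepsilon'$ do land in the subspaces cut out by \eqref{def/sigma-morphism-relation} — this uses the naturality of the braiding and the fundamental lemma, and is essentially the content already invoked in Definition \ref{def/sigma-induction-functor} via \ref{lemma/induced-half-braiding-satisfies-comm-relations} and \ref{lemma/induced-arrow-respect-half-braidings}. Second, verify the two triangle identities for each adjunction; here the sliding lemma does the heavy lifting, collapsing a composite of cap-then-reinsert into the identity by the $\dim(\Omega)$ normalization. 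Third, note that additivity of both functors and naturality in both variables are immediate from the pointwise formulas. The main obstacle I anticipate is bookkeeping rather than conceptual: keeping the $n$ distinct $\Omega$-strands and their braidings straight while checking that a general $\sigma$-morphism (which a priori only satisfies the compatibility \eqref{def/sigma-morphism-relation}, not any factorization) is forced by the sliding lemma to be determined by its "restriction" to the $\mathbb{1}$-summand — in other words, proving surjectivity/injectivity of the hom-space maps simultaneously for all $n$ and all admissible $\sigma$, uniformly in the relative positions of the orbits $[i]$. I expect that once the $n=1$ case (the classical Drinfeld center adjunction) is written carefully, the general case follows by treating the $\Omega$-strands one at a time, since the commutative relations \texttt{(:comm 1)}–\texttt{(:comm 3)} are exactly what guarantee the order of collapsing does not matter.
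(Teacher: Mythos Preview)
Your overall strategy---build the four (co)units explicitly and verify the triangle identities, with the sliding lemma doing the work---is the same as the paper's, which packages the same content as an explicit hom-set bijection $F,G$ with $GF=1$ and $FG=1$. The unit $Y\to F_\sigma I_\sigma Y$ as inclusion of the $\mathbb{1}$-summand is exactly the paper's map $G$, and your identification of \ref{lemma/sliding-lemma} and \ref{lemma/fundamental-lemma-of-Omega} as the key inputs is correct.

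There is, however, one genuine gap in your description of the counit $\varepsilon:I_\sigma F_\sigma(\bar X)\to\bar X$. You propose to obtain it by ``capping off each $\Omega$-loop using $\operatorname{coev}/\operatorname{eval}$ and the dimension normalization.'' But the two ends of each $\Omega$-strand sit on opposite sides of $X$ (at positions $[k]'$ and $[k]''$), and there is no way to bring them together and evaluate using only the structural maps of $C$ that will produce a $Z_\sigma(C)$-morphism. The paper's counit is
\[
\varepsilon_{(Y,\beta)}=\frac{1}{\dim(\Omega)^n}\,\widehat{\prod}_{k}\,\beta_{[k],\Omega}^{\star},
\]
where each $\beta_{[k],\Omega}^{\star}$ first uses the half-braiding $\beta_{[k]}$ of the \emph{target} object to carry the $[k]'$-th $\Omega$ across $Y$, and only then evaluates against the $[k]''$-th $\Omega^\star$. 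Without invoking $\beta$ (resp.\ $\gamma$) here, the resulting $C$-map will not intertwine the induced half-braidings on $I_\sigma Y$ with $\beta$, so it will not lie in $Z_\sigma(C)$; and your later check that ``$\eta$ and $\varepsilon'$ land in the subspace'' cannot succeed for a map built from $\operatorname{coev}/\operatorname{eval}$ alone. The same issue recurs in your surjectivity argument: a general $\sigma$-morphism $I_\sigma X\to(Y,\beta)$ is \emph{not} determined by its restriction to the $\mathbb{1}$-summand via the sliding lemma alone---the paper's proof of $FG=1$ uses both the sliding lemma \emph{and} the compatibility \eqref{def/sigma-morphism-relation} of $\psi$ with $\beta$ to drag the $\Omega$'s across. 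Once you insert the half-braidings of the target into the counit (and dually into the unit $\eta':\bar X\to I_\sigma F_\sigma\bar X$), your plan goes through and coincides with the paper's argument.
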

\begin{Proof}
  We will prove that $F_{\sigma}$ is right adjoint to $I_{\sigma}$. Namely,
  we need to show that for each $C$-object $X$ and for each
  $Z_{\sigma}(C)$-object $(Y,\beta)$, there is a vector space isomorphism
  $$F: Hom_{C}(X,F_{\sigma}(Y,\beta)) \rightleftarrows Hom_{Z_{\sigma}(C)}(I_{\sigma}(X), (Y,\beta)): G.$$
  It will then be obvious that the other side can be proved
  verbatim by taking duals (or by flipping the graph, in terms of
  graphical calculus). To prove such equivalence, we construct
  explicit maps for both sides, and argue that each composition
  equals the identity map.

  Given $\phi \in Hom_{C}(X,F_{\sigma}(Y,\beta))$, define its image on the
  other side to be
  $$F(\phi) := \frac{1}{dim(\Omega)^{n}}
  (\widehat{\Pi}_{k \in \{[1], \ldots , [2n]\}} \beta_{k, \Omega}^{\star}) \circ (1 \otimes \ldots \otimes 1 \otimes \phi \otimes 1 \otimes \ldots \otimes 1),$$
  where $\Omega$ is the shorthand notation given in \ref{def/Omega},
  and the term
  $\widehat{\Pi}$ is explained below: The term $\widehat{\Pi}$ is a
  $C$-morphism $I_{\sigma}(Y) \to Y$. Each $\beta_{[i], \Omega}^{\star}$ is a
  $C$-morphism $\Omega \otimes Y \otimes \Omega \to Y$, induced from
  $\Omega \otimes Y \xrightarrow{\beta_{[i], \Omega}} Y \otimes \Omega$ by composing the
  evaluation map (note that $\Omega^{\star} \simeq \Omega$). So the
  $\beta_{[i],\Omega}^{\star}$'s are maps that kills the $[i]'$-th and the
  $[i]''$-th component of $\Omega$ by using $\beta_{[i]}$. However,
  depending on the combinatorial nature of $\sigma \in \Adm_{2n}$, one
  should insert suitable braidings for it to make sense. For
  example, if $n = 3$, $[1] = [4]$, $[2] = [6]$, and $[3] = [5]$,
  we define the $\widehat{\Pi}$ term as in the following diagram --
  the order of the $[i]$'s does not really matter, thanks to
  \ref{def/comm-relation}.

  \begin{center}
    \includegraphics[height=4cm]{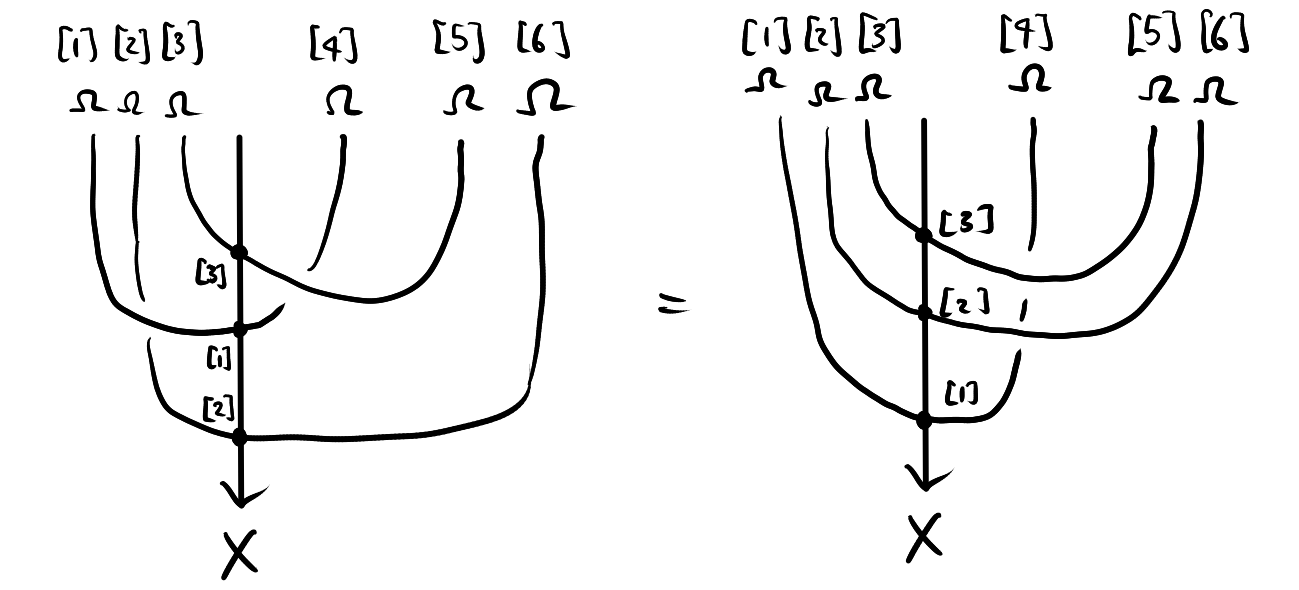}
  \end{center}

  That $F(\phi)$ is indeed a morphism in $Z_{\sigma}(C)$ follows directly
  from the commutative relation \ref{def/comm-relation}, that
  half-braidings are by definition monoidal, and the sliding
  lemma \ref{lemma/sliding-lemma}.

  \begin{center}
    \includegraphics[height=3cm]{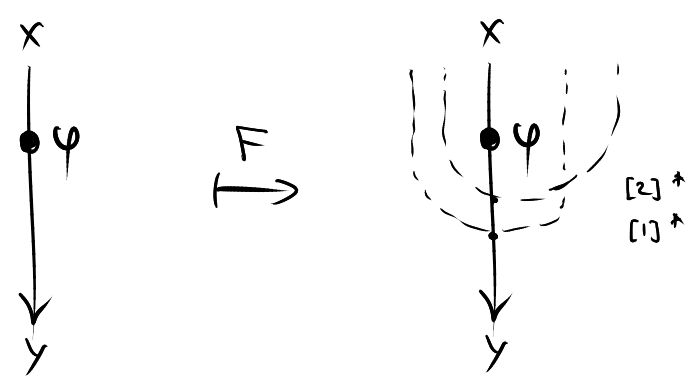}
  \end{center}

  On the other hand, given $\psi \in Hom_{Z_{\sigma}(C)}(I_{\sigma}(X), (Y,\beta))$,
  define its image $G(\psi)$ on the other side to be as indicated in
  the graph below.

  \begin{center}
    \includegraphics[height=3cm]{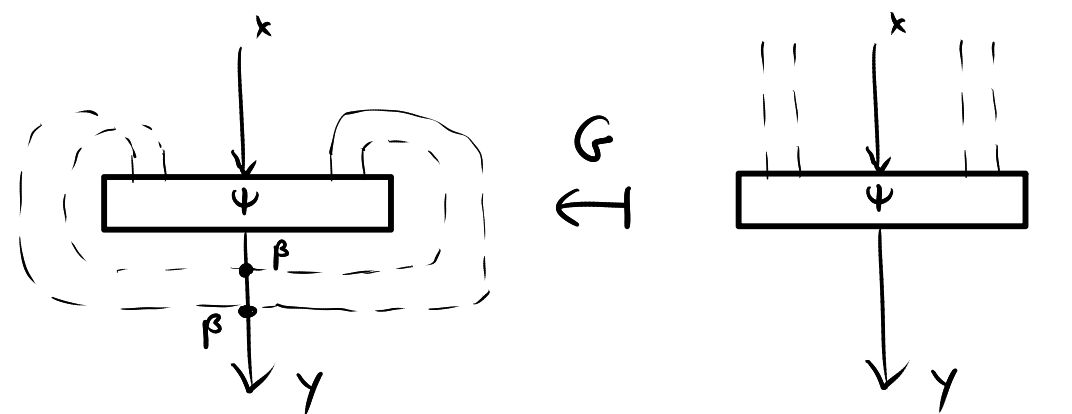}
  \end{center}

  To prove that $GF$ is the identity map, use the fact that the
  half-braidings are by definition functorial. Hence one can
  slide the $\Omega$'s out the axis. Finally, the product of the
  dimensions of the $\Omega$'s cancel with the denominator.

  \begin{center}
    \includegraphics[height=7cm]{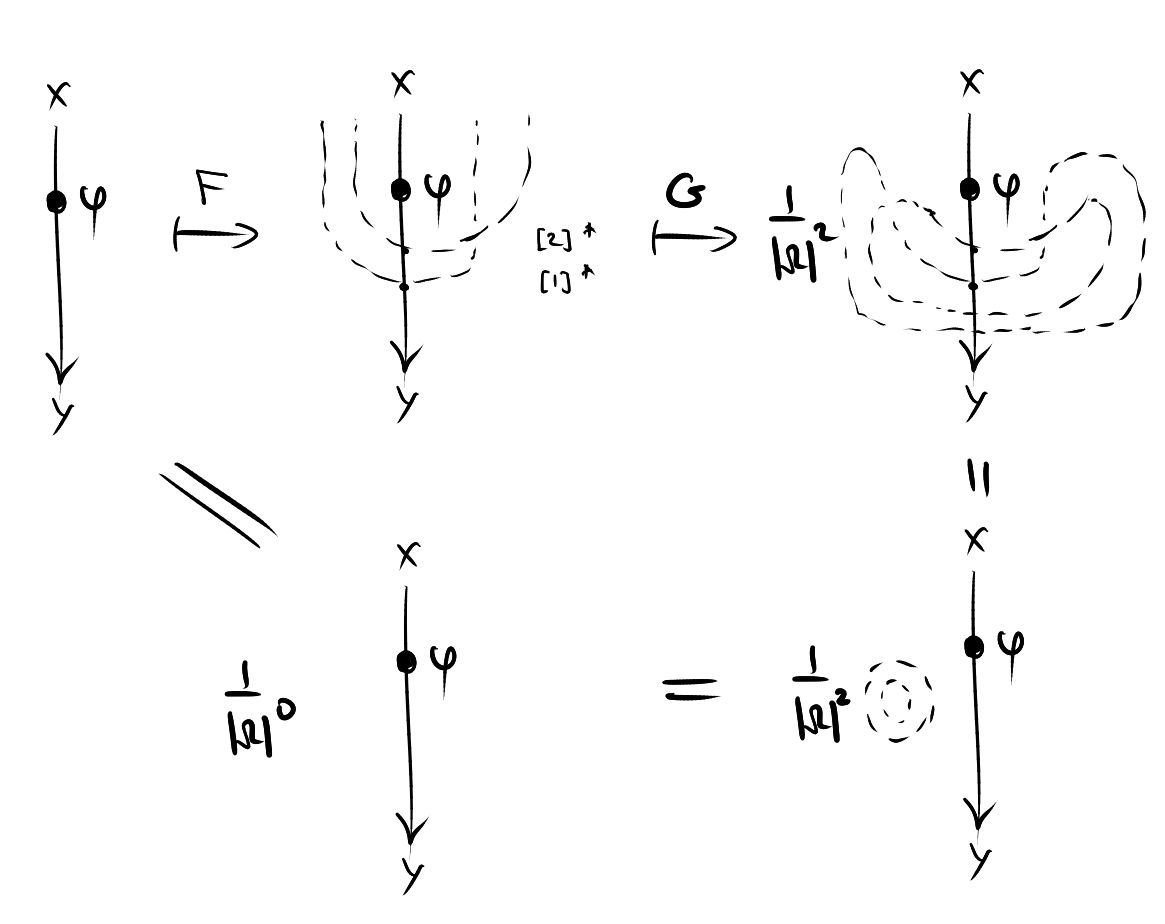}
  \end{center}

  \noindent To prove that $FG$ is the identity map, use the
  sliding lemma again. Then use the assumption that $\psi$ is a
  $Z_{\sigma}(C)$-morphism to drag $\Omega$ down. Finally, slide
  the $\Omega$'s away from the axis as in the case for $GF=1$.

  \begin{center}
    \includegraphics[height=11cm]{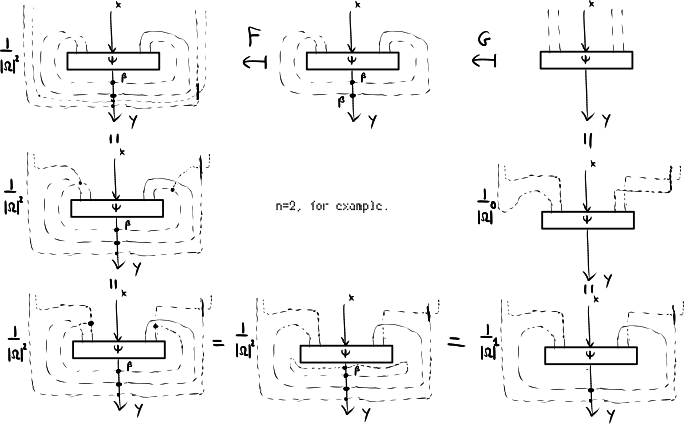}
  \end{center}

\end{Proof}

\subsubsection{$Z$ is finite semisimple abelian}

In this section, we show that the categorical centers of higher
general over premodular categories are finite semisimple abelian
categories.

\begin{lemma}[(monadic) projection]\label{corollary/monadic-projection}
  Let $\sigma \in Adm_{2n}$ be an admissible gluing, $C$ be a premodular
  category, $Z = Z_{\sigma}(C)$ be the categorical center of higher
  genera of $C$ with respect to $\sigma$, and $\overline{X}=(X,\gamma_{\cdot})$
  and $\overline{Y}=(Y,\beta_{\cdot})$ be $Z$-objects.

  Recall that the morphism space $Z(\overline{X},\overline{Y})$
  is by definition a subspace of $C(X,Y)$. Then there is a
  natural projection $\pi_{\gamma_{\cdot},\beta_{\cdot}} \in End(Mor_{C}(X,Y))$ to the
  subspace $Z(\overline{X},\overline{Y})$ that respects the
  composition.
\end{lemma}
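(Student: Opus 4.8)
The plan is to construct the projection $\pi_{\gamma_{\cdot},\beta_{\cdot}}$ explicitly as an averaging operator over the half-braidings, mimicking the formula for $F(\phi)$ appearing in the proof of the ambidextrous adjunction \ref{theorem/F-and-I-are-ambidextrous-adjoint}, but applied at the level of morphism spaces rather than Hom-adjunctions. Concretely, given $f \in Hom_{C}(X,Y)$, I would set
$$\pi_{\gamma_{\cdot},\beta_{\cdot}}(f) := \frac{1}{dim(\Omega)^{n}}\left(\widehat{\Pi}_{k}\, \beta_{k,\Omega}^{\star}\right) \circ \left(1^{\otimes n} \otimes f \otimes 1^{\otimes n}\right) \circ \left(\widehat{\Pi}_{k}\, \gamma_{k,\Omega}\right)^{\text{(dual)}},$$
that is, wrap $f$ with $\Omega$-strands on both sides, fuse those strands back using the $\gamma$'s on the source side and the $\beta$'s on the target side (with the appropriate braidings inserted as dictated by the combinatorics of $\sigma$, exactly as in the $\widehat{\Pi}$ diagram), and normalize by $dim(\Omega)^{n}$. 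The point is that this is a $C$-morphism $X \to Y$ because the $\Omega$-decorated composite $I_{\sigma}(X) \to I_{\sigma}(Y)$ descends after capping.

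The three things to verify, in order, are: (i) $\pi_{\gamma_{\cdot},\beta_{\cdot}}(f)$ always lands in the subspace $Z(\overline{X},\overline{Y})$; (ii) $\pi_{\gamma_{\cdot},\beta_{\cdot}}$ restricts to the identity on $Z(\overline{X},\overline{Y})$; and (iii) $\pi_{\gamma_{\cdot},\beta_{\cdot}}$ is compatible with composition, i.e. $\pi_{\gamma_{\cdot},\delta_{\cdot}}(g \circ f) = \pi_{\beta_{\cdot},\delta_{\cdot}}(g) \circ \pi_{\gamma_{\cdot},\beta_{\cdot}}(f)$ for $Z$-objects $\overline{X},\overline{Y}=(Y,\beta_{\cdot}),\overline{Z}=(Z,\delta_{\cdot})$ — or at least that it respects composition in the precise sense needed downstream, namely $\pi(g) \circ \pi(f) = \pi(g \circ f)$ whenever the middle object carries a fixed half-braiding system. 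For (i), the argument is the same as the one used to show $F(\phi)$ is a $Z_{\sigma}(C)$-morphism: apply the commutative relations \ref{def/comm-relation}, use that half-braidings are monoidal, and invoke the sliding lemma \ref{lemma/sliding-lemma} to drag an auxiliary $\Omega$ across $\pi(f)$, which forces the defining relation \ref{def/sigma-morphism-relation}. For (ii), if $f$ already satisfies \ref{def/sigma-morphism-relation} then the $\beta$-caps and the $\gamma$-caps can be slid through $f$ and matched against each other; by the functoriality of half-braidings one slides the $\Omega$-strands off the axis, and the product of the $dim(\Omega)$'s cancels the denominator — this is verbatim the ``$GF=1$'' computation in the proof of \ref{theorem/F-and-I-are-ambidextrous-adjoint}.

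For (iii), the key move is that between $\pi(g)$ and $\pi(f)$ one has a nested pair of $\beta$-caps (the source-side caps of $\pi(g)$ and the target-side caps of $\pi(f)$, both built from the same half-braiding system $\beta_{\cdot}$ on $Y$); using again that $f$ and $g$ have been projected to be compatible — or more robustly, using the fundamental lemma \ref{lemma/fundamental-lemma-of-Omega} to resolve $1_{Y\otimes\Omega}$ and the sliding lemma — these two layers of caps collapse to a single layer, recovering $\pi(g\circ f)$ up to the correct power of $dim(\Omega)$. I expect (iii) to be the main obstacle: keeping track of which braidings $\widehat{\Pi}$ inserts when two stacks of caps are composed, and checking that the combinatorial bookkeeping from $\sigma \in \Adm_{2n}$ is consistent on both copies, is where the diagrammatic argument is most delicate. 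A clean way to sidestep much of this is to observe that $\pi_{\gamma_{\cdot},\beta_{\cdot}}$ is the component at $(\overline{X},\overline{Y})$ of the idempotent comonad (or rather the projection onto invariants) associated to the ambidextrous adjunction $I_{\sigma} \dashv F_{\sigma} \dashv I_{\sigma}$ already established in \ref{theorem/F-and-I-are-ambidextrous-adjoint}: the adjunction gives $Hom_{Z}(\overline{X},\overline{Y}) \hookrightarrow Hom_{C}(X,Y)$ as a direct summand via unit/counit, and categorical generalities then hand us (i), (ii), (iii) at once. I would present the explicit formula first for concreteness and then remark that functoriality of the projection follows formally from the adjunction, relegating the diagrammatic verification to the appendix.
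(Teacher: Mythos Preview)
Your proposal is correct and in fact contains the paper's argument as your closing remark: the paper takes precisely the abstract route you sketch at the end, deriving the projection and its functoriality from the strictly ambidextrous adjunction $I_{\sigma} \dashv F_{\sigma} \dashv I_{\sigma}$ via a general ``unity trace'' condition on the induced bimonad (your explicit averaging formula appears only as a graphical remark afterward). The only difference is emphasis: you lead with the diagrammatic construction and defer the monadic shortcut, whereas the paper does the reverse---so if you want to match the paper, promote your last paragraph to the main argument and demote the explicit verification of (iii) to a picture.
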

\begin{Proof}
  The full proof is tedious and postponed to
  \ref{section/adjunctions-as-monads}. In particular, see
  \ref{example/main-example-of-monadic-projection}. Roughly, the
  statement follows from the monadic nature of the strictly
  ambidextrous adjunctions and a condition called the ``unity
  trace condition'' (\ref{def/unity-trace}).
\end{Proof}

\begin{theorem} \label{thm/cat-prop-of-Z}
  Let $C$ be a premodular category, $\sigma \in \Adm_{2n}$ an
  admissible gluing. Then the categorical center of higher genera
  $Z_{\sigma}(C)$ is a finite semisimple abelian category.
\end{theorem}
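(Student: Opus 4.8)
The plan is to establish the three defining properties of a finite semisimple abelian category --- finiteness (finite-dimensional $\operatorname{Hom}$-spaces and finitely many simples), semisimplicity (every object is a direct sum of simples, equivalently every short exact sequence splits), and abelianness (existence of kernels, cokernels, biproducts with the usual coherence) --- by transporting them across the ambidextrous adjunction $I_{\sigma} \dashv F_{\sigma} \dashv I_{\sigma}$ from Theorem \ref{theorem/F-and-I-are-ambidextrous-adjoint}, using the monadic projection of Lemma \ref{corollary/monadic-projection} as the main technical device. First I would record that $Z = Z_{\sigma}(C)$ is $\mathbb{k}$-linear and additive: biproducts are computed as in $C$ on the underlying object, with the half-braidings combining componentwise, so $F_{\sigma}$ creates finite direct sums and the zero object; additivity of both functors was already noted.

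Next I would address finiteness. Since $Z(\overline{X},\overline{Y})$ is by definition a $\mathbb{k}$-subspace of $C(X,Y)$, which is finite-dimensional because $C$ is fusion, every $\operatorname{Hom}$-space in $Z$ is finite-dimensional. For the finiteness of the set of simple objects (and the existence of enough of them), I would use the adjunction: every $Z$-object $\overline{X}$ is a subobject --- indeed, by the monadic projection $\pi_{\gamma,\gamma}$ applied to $\operatorname{id}_X$, a retract --- of $I_{\sigma}(F_{\sigma}(\overline{X})) = I_{\sigma}(X)$, and $I_{\sigma}(X)$ is built in \eqref{def/x-sigma} as a finite direct sum of objects $X_1 \otimes \cdots \otimes X \otimes \cdots \otimes X_{2n}$ with $X_k$ ranging over the finite set $o(C)$. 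Hence, once semisimplicity is in hand, every simple of $Z$ appears as a summand of some $I_{\sigma}(i)$ with $i$ simple in $C$, and there are only finitely many such summands up to isomorphism because each $\operatorname{End}_Z(I_{\sigma}(i))$ is finite-dimensional; the endomorphism algebra of a simple is $\mathbb{k}$ since $\mathbb{k}$ is algebraically closed, giving a genuine fusion-type category.

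The heart of the argument is semisimplicity, and this is where the monadic projection does the work. Given a $Z$-object $\overline{X} = (X,\gamma)$, the counit $\varepsilon_{\overline{X}} \colon I_{\sigma}(F_{\sigma}\overline{X}) \to \overline{X}$ of the adjunction $I_{\sigma} \dashv F_{\sigma}$ is a split epimorphism in $Z$ (the splitting being the $Z$-morphism corresponding under adjunction to $\operatorname{id}_X \in C(X,F_{\sigma}\overline{X})$, which is where Lemma \ref{corollary/monadic-projection} --- more precisely the ``unity trace condition'' --- guarantees the relevant composite is the identity). Thus $\overline{X}$ is a retract of $I_{\sigma}(X)$. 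Now $X$ is semisimple in $C$, so $X \cong \bigoplus_j i_j$ with $i_j$ simple; since $I_{\sigma}$ is additive, $I_{\sigma}(X) \cong \bigoplus_j I_{\sigma}(i_j)$, and it suffices to show each $I_{\sigma}(i)$ is semisimple in $Z$. For that I would show $\operatorname{End}_Z(I_{\sigma}(i))$ is a semisimple $\mathbb{k}$-algebra: by ambidexterity and the adjunction isomorphisms, $\operatorname{End}_Z(I_{\sigma}(i)) \cong \operatorname{Hom}_C(i, F_{\sigma}I_{\sigma}(i)) = \operatorname{Hom}_C(i, (i)_\sigma)$, which is finite-dimensional, and the ambidextrous (Frobenius) structure endows it with a nondegenerate trace form --- this is exactly the content of the ``unity trace condition'' and the symmetry of the adjunction --- so it is a finite-dimensional Frobenius algebra with nondegenerate trace, hence semisimple in characteristic $0$ by the standard separability criterion. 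A semisimple finite-dimensional algebra has a semisimple module category, and $I_{\sigma}(i)$ decomposes accordingly in $Z$; retracts of semisimple objects are semisimple, so every $\overline{X}$ is semisimple. Consequently idempotents split, kernels and cokernels exist (as images of the complementary idempotents), and $Z$ is abelian.

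I expect the main obstacle to be the rigorous verification that the counit $\varepsilon_{\overline{X}}$ is split in $Z$ and that $\operatorname{End}_Z(I_{\sigma}(i))$ carries a \emph{nondegenerate} trace pairing --- i.e., extracting genuine semisimplicity rather than mere self-injectivity from the ambidextrous adjunction. This is precisely the ``unity trace condition'' of \ref{def/unity-trace} and the monadic projection $\pi$ of Lemma \ref{corollary/monadic-projection}: one must check that $\pi$ is not merely an idempotent endomorphism of $\operatorname{Mor}_C(X,Y)$ onto $Z(\overline{X},\overline{Y})$ but is compatible with composition in the way required for the associated monad $F_{\sigma}I_{\sigma}$ to be separable, so that $Z$, as the category of algebras over a separable monad on the semisimple category $C$, inherits semisimplicity. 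Everything else --- $\mathbb{k}$-linearity, finite-dimensionality of Hom-spaces, existence of biproducts, the reduction to $I_{\sigma}(i)$ for $i$ simple --- is routine once that compatibility is nailed down, and the bulk of it is deferred to \ref{section/adjunctions-as-monads} as indicated.
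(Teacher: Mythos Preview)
Your overall strategy is sound and the pieces you identify (ambidextrous adjunction, the projection $\pi$, the unity-trace condition) are exactly the ones the paper uses, but the route you take to semisimplicity and abelianness is genuinely different from the paper's and has a logical ordering problem.

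The paper proves abelianness \emph{first}, by a direct construction: given a $Z$-morphism $f\colon(X,\gamma)\to(Y,\beta)$, take its kernel $K\hookrightarrow X$ in $C$, use bi-exactness of $\otimes$ in a multitensor category to restrict each $\gamma_{[i]}$ to a half-braiding on $K$, and then use the projection $\pi$ (which respects composition) to verify the universal property in $Z$. Exactness is then immediate because (co)kernels in $Z$ sit over those in $C$. Semisimplicity is then a one-line trick: a monic $f$ in $Z$ is monic in $C$ (checked via $I_\sigma$), hence has a $C$-retraction $p$, and $\pi_{\beta,\gamma}(p)$ is a $Z$-retraction since $\pi(p)\circ f=\pi(p)\circ\pi(f)=\pi(p\circ f)=\pi(1_X)=1_X$. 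No Frobenius-algebra or separable-monad argument is needed.

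Your route instead tries to establish semisimplicity first, by showing $\operatorname{End}_Z(I_\sigma(i))$ is a Frobenius algebra with nondegenerate trace, then decomposing $I_\sigma(i)$ and pulling the decomposition back along the retract $\overline{X}\hookrightarrow I_\sigma(X)$; abelianness is then supposed to follow. The gap is that decomposing $I_\sigma(i)$ according to the idempotents of its (semisimple) endomorphism algebra already requires idempotents in $Z$ to split, which you only assert \emph{after} the decomposition (``consequently idempotents split''). That is circular. You can patch it by first proving idempotent-completeness of $Z$ directly---an idempotent $e$ on $(X,\gamma)$ splits in $C$, and the summands inherit half-braidings by the same bi-exactness argument the paper uses for kernels---but once you have that machinery in hand, the paper's ``project a $C$-splitting'' argument for semisimplicity is strictly simpler than your Frobenius/separable-monad detour, and you never need to analyze $\operatorname{End}_Z(I_\sigma(i))$ at all. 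Your treatment of finiteness (Hom-spaces as subspaces of $C$-Homs; every simple a summand of some $I_\sigma(i)$) matches the paper's.
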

\begin{Proof}

  The complete proof is tedious and thus postponed to the
  appendix. See \ref{lemma/higher-cat-center-is-abelian},
  \ref{lemma/higher-cat-center-is-semisimple}, and
  \ref{lemma/higher-cat-center-is-finite}. The main idea is to
  make heavy use of the projection
  \ref{corollary/monadic-projection}.
\end{Proof}

\section{Proof of the main
  statement} \label{section/proof-for-the-main-statement}

In this section, we prove the main statement of this paper.

\begin{theorem}[Main Statement] \label{main-statement} Let $C$ be
  a premodular category, $\sigma \in \Adm_{2n}$ an admissible
  gluing, $\Sigma = \Sigma_{\sigma}$ the surface constructed from
  $\sigma$. Then the Crane-Yetter theory of $\Sigma_{\sigma}$
  over $C$ and the categorical center of higher genera
  $Z_{\sigma}(C)$ are equivalent as $\mathbb{k}$-linear
  categories
  $$CY_{\Sigma_{\sigma}}(C) \simeq Z_{\sigma}(C).$$

  \noindent As the $Z_{\sigma}(C)$'s are proven to be finite
  semisimple abelian \ref{thm/cat-prop-of-Z}, the Crane-Yetter
  theory for each open surface is also a finite semisimple
  abelian category.
\end{theorem}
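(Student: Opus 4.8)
The plan is to build an explicit $\mathbb{k}$-linear functor $\Phi : CY_{\Sigma_\sigma}(C) \to Z_\sigma(C)$ and show it is fully faithful and essentially surjective. The topological side is the Karoubi envelope of $cy_{\Sigma_\sigma}(C)$, so by the universal property of $\Kar(-)$ it suffices to first produce a fully faithful functor $cy_{\Sigma_\sigma}(C) \to Z_\sigma(C)$ landing in a category that is already idempotent-complete — and $Z_\sigma(C)$ is, since it is finite semisimple abelian by \ref{thm/cat-prop-of-Z}. Concretely, recall that $\Sigma_\sigma$ is a disk with $2n$ legs glued in pairs according to $\sigma$. An object $c$ of $cy_{\Sigma_\sigma}(C)$ is a colored point configuration on the disk; by isotopy inside the string-net space I can push all the colored points to a single spot near the center of the disk and fuse them into one object $X \in C$, so that $\Phi$ on objects sends $c$ to a $\sigma$-pair $(X, \gamma)$. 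The half-braidings $\gamma_{[k]}$ are read off from the topology: a string wrapping through the $[k]$-th pair of glued legs, slid past the central object $X$ using the sliding lemma \ref{lemma/sliding-lemma}, is exactly a half-braiding, and the three ways in which two such loops (through legs $[i]$ and $[j]$) can be linked on the surface are precisely the three configurations $[i]'<[i]''<[j]'<[j]''$, $[i]'<[j]'<[i]''<[j]''$, $[i]'<[j]'<[j]''<[i]''$ of \ref{def/comm-relation}, forcing \texttt{(:comm 1)}, \texttt{(:comm 2)}, \texttt{(:comm 3)}. On morphisms, a string net in $\Sigma_\sigma \times [0,1]$ is resolved (again by the Reshetikhin–Turaev evaluation near the center and the sliding lemma) to a morphism $f : X \to Y$ in $C$; the relation \eqref{def/sigma-morphism-relation} defining $\sigma$-morphisms falls out of dragging $f$ past each of the $n$ loops.

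Next I would check that $\Phi$ is well defined — i.e. that isotopic string nets and string nets differing by a local Reshetikhin–Turaev relation map to the same $Z_\sigma(C)$-morphism — which is where the string-net relations in \ref{def/string-nets-in-3d} get used, together with the fact that $RT$ for genus-$0$ surfaces is already baked into the definition of both sides. For full faithfulness the natural strategy is to compare via the connecting functors: $F_\sigma \circ \Phi$ should agree with the "forget the string net structure, record only the fused central object and the underlying morphism" functor $cy_{\Sigma_\sigma}(C) \to C$, and $\Phi$ should intertwine the topological induction (adding an $\Omega$-colored loop through each leg-pair, as in the picture defining $I_\sigma$ in \ref{def/sigma-induction-functor}) with the algebraic $I_\sigma$. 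Granting that, the ambidextrous adjunction $I_\sigma \dashv F_\sigma \dashv I_\sigma$ of \ref{theorem/F-and-I-are-ambidextrous-adjoint} lets me reduce computing $\Hom_{Z_\sigma(C)}(\Phi c, \Phi c')$ to a $\Hom$ in $C$ between induced objects, and the monadic projection $\pi_{\gamma,\beta}$ of \ref{corollary/monadic-projection} realizes $Z$-morphisms as the image of an explicit idempotent on $\Hom_C(X,Y)$; the matching topological statement is that the analogous idempotent (given by inserting $\Omega$-loops through the legs and projecting, exactly the content of the sliding lemma) cuts out $\Hom_{cy}(c,c')$ inside $\Hom_C(X,Y)$. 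Comparing the two idempotents gives the isomorphism on $\Hom$-spaces. Essential surjectivity is then easy: given a $\sigma$-pair $(X,\gamma)$, the object $X$ placed at the center of the disk is sent by $\Phi$ to something with the same underlying $X$, and after passing to $\Kar$ every $\sigma$-pair (being a summand of an induced object $I_\sigma(W)$ by semisimplicity) is hit, because $\Phi(I_\sigma^{top}(W)) \cong I_\sigma(W)$.

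The main obstacle I expect is the well-definedness-and-faithfulness bookkeeping around the framings and the \emph{order} in which the $2n$ legs sit on the boundary circle of the disk versus the order of the pairs $[k]$: the $\widehat{\Pi}$-term in the proof of \ref{theorem/F-and-I-are-ambidextrous-adjoint} already shows that "the order of the $[i]$'s does not really matter, thanks to \ref{def/comm-relation}", and the real work is proving the topological counterpart — that any two ways of resolving a string net on $\Sigma_\sigma$ into data $(X, f, \text{loops})$ differ by exactly the braidings that \texttt{(:comm 1--3)} and \eqref{def/sigma-morphism-relation} declare to be equalities. In other words, the crux is that the combinatorics of $\sigma \in \Adm_{2n}$ controlling which legs are glued is faithfully transcribed into the three commutation cases, with no relation lost and none spuriously imposed; once that dictionary is pinned down, the rest is an application of the sliding lemma, the fundamental lemma of $\Omega$ \ref{lemma/fundamental-lemma-of-Omega}, and the already-established adjunction and projection machinery. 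I would also need the dimension bookkeeping (the $dim(\Omega)^n$ normalizations) to line up between the two sides, but by the $n=0,1,2$ sanity checks (\ref{thm/cy-of-a-disk}, \ref{thm/drinfeld-center-and-cylinder}, \ref{thm/elliptic-center-and-punctured-torus}) this is expected to be routine.
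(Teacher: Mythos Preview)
Your overall strategy and the ingredients you invoke---reducing to a single fused object at the centre, the sliding lemma, the ambidextrous adjunction, the monadic projection, and the three leg-linking configurations forcing \texttt{(:comm 1)}--\texttt{(:comm 3)}---are all correct and are exactly what the paper uses. But there is a genuine error in your description of $\Phi$ on objects that would block the proof as written. You say $\Phi$ sends a configuration $c$ (fused to a single $C$-object $X$) to a $\sigma$-pair $(X,\gamma)$, with $\gamma_{[k]}$ ``read off'' as the string wrapping through the $[k]$-th leg pair. This does not type-check: an object $(p,X)$ of $cy_{\Sigma_\sigma}(C)$ carries no half-braiding data, so there is no canonical half-braiding on $X$ to extract. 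The string-through-the-leg picture \emph{does} define a half-braiding, but on the larger object $X_\sigma = \bigoplus_{o(C)^n} X_1\otimes\cdots\otimes X \otimes\cdots\otimes X_{2n}$ of \eqref{def/x-sigma}, not on $X$ itself---the extra $\Omega$-strands through the legs are precisely what make the half-braiding exist (via \ref{lemma/fundamental-lemma-of-Omega}). So the functor you want actually sends $(p,X)$ to $I_\sigma(X) = (X_\sigma, \gamma_{\mathrm{ind}})$, as in \ref{def/sigma-induction-functor}. Once this is fixed your essential-surjectivity step must change too: a general $(X,\gamma)$ is not in the image of $cy$ under $\Phi$, only a \emph{retract} of $\Phi((p,X)) = I_\sigma(X)$ via the adjunction, which is why the Karoubi envelope is genuinely needed and not just a formality.

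After that correction your route and the paper's differ only tactically. Rather than proving that $J = \Kar(j)$ is fully faithful and essentially surjective, the paper writes down an explicit quasi-inverse $G: Z_\sigma(C) \to \Kar(ho.cy_{\Sigma_\sigma}(C))$ sending $(X,\gamma)$ to $((p,X),\pi_\gamma)$, where $\pi_\gamma$ is the idempotent string net built from $\gamma$ and $\Omega$-loops through the legs (normalized by $\dim(\Omega)^{-n}$), and then checks $J\circ G = 1$ and $G\circ J \simeq 1$ directly with the sliding lemma. Your plan to match the algebraic idempotent $\pi_{\gamma,\beta}$ of \ref{corollary/monadic-projection} against the topological $\Omega$-loop idempotent is exactly the content of those two checks, so once $\Phi$ is corrected to land in $I_\sigma(X)$ the two arguments converge.
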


\noindent To stress the informal aspect again, we recall
\ref{remark/shape-algebra-duality}.

\begin{remark}\label{recall/remark/shape-algebra-duality}
  In $H_{1}(S^{1}) \simeq \mathbb{Z}$, one sees the algebra of
  the shape $S^{1}$ and the shape of the algebra $\mathbb{Z}$.
  Our main result should be viewed as a higher analogue. That is,
  one sees the (higher) algebra of the shape $\Sigma_{\sigma}$
  and the shape of the (higher) algebra $Z_{\sigma}$.
\end{remark}

\begin{example}[$n=0$]
  For $n=0$, the surface $\Sigma$ is the open disk, the categorical
  center of higher genera reduces to the underlying premodular
  category $C$, so the theorem recovers that $CY_{\Sigma}(C) \simeq C$ as
  shown in \ref{thm/cy-of-a-disk}.
\end{example}
\begin{example}[$n=1$]
  For $n=1$, the only possible surface is the cylinder, the
  categorical center of higher genera reduces to the Drinfeld
  center $Z(C)$. Hence, the theorem recovers that
  $CY_{\Sigma}(C) \simeq Z(C)$ as shown in
  \ref{thm/drinfeld-center-and-cylinder}.
\end{example}
\begin{example}[$n=2$]
  For $n=2$, there are
  two possible surfaces: the $1$-punctured torus and the
  $3$-punctured disk. In the former case, the categorical center
  of higher genera reduces to the elliptic center $Z^{el}(C)$, so
  the theorem recovers that $CY_{\Sigma}(C) \simeq Z^{el}(C)$ as shown in
  \ref{thm/elliptic-center-and-punctured-torus}. In the later
  case, the theorem provides a new result.
\end{example}

\subsection{Strategy}

\[\begin{tikzcd}
	{CY_{\Sigma_{\sigma}}(C)} && {Z_{\sigma}(C)} \\
	{\Kar(cy_{\Sigma_{\sigma}}(C))} & {\Kar(ho.cy_{\Sigma_{\sigma}}(C))} \\
	{cy_{\Sigma_{\sigma}}(C)} & {ho.cy_{\Sigma_{\sigma}}(C)}
	\arrow[from=3-1, to=2-1, hook]
	\arrow["{:=}", from=1-1, to=2-1]
	\arrow[from=3-2, to=2-2, hook]
	\arrow["{\simeq}", from=3-2, to=3-1, hook']
	\arrow["{J}"{name=0, swap}, from=2-2, to=1-3]
	\arrow["{G}"{name=1, swap}, from=1-3, to=2-2, curve={height=12pt}]
	\arrow["{j}"', from=3-2, to=1-3, curve={height=12pt}]
	\arrow["{\simeq}", from=2-1, to=2-2, no head]
	\arrow[Rightarrow, "{\simeq}" description, from=0, to=1, shorten <=1pt, shorten >=1pt, phantom, no head]
\end{tikzcd}\]

\begin{enumerate}
  \item (Condensation of string nets) By definition,
        $CY_{\Sigma_{\sigma}}(C)$ is the Karoubi envelope of
        $cy_{\Sigma_{\sigma}}(C)$. Find an equivalent subcategory
        $ho.cy_{\Sigma_{\sigma}}(C)$ of $cy_{\Sigma_{\sigma}}$ by
        reducing topological data. Then of
        course $$\Kar(cy_{\Sigma_{\sigma}}) \simeq \Kar(ho.cy_{\Sigma_{\sigma}}(C)).$$
  \item (top $\to$ alg) Construct a
        functor $$ho.cy_{\Sigma_{\sigma}}(C) \xrightarrow{j} Z_{\sigma}(C),$$
        and extend it
        to $$\Kar(ho.cy_{\Sigma_{\sigma}}(C)) \xrightarrow{J} Z_{\sigma}(C).$$
  \item (top $\leftarrow$ alg) Construct a
        functor $$\Kar(ho.cy_{\Sigma_{\sigma}}(C)) \xleftarrow{G} Z_{\sigma}(C).$$
  \item Argue that the compositions $J \circ G$ and $G \circ J$ are
        equivalent to the identity functors.
  \item Show that the equivalence is of finite semisimple abelian
        categories.
\end{enumerate}

\subsection{Proof}

In this section, we give the full proof of the main statement.
Each subsection corresponds to each step in the outlined
strategy.

\subsubsection{Reducing topological data}

In this subsection, we reduce the topological data by
constructing a smaller yet equivalent subcategory
\begin{equation}
ho.cy_{\Sigma_{\sigma}}(C) \xrightarrow[\sim]{\subset} cy_{\Sigma_{\sigma}}(C).
\end{equation}

\begin{definition}[$ho.cy_{\Sigma_{\sigma}}(C)$]
  Let $C$ be a premodular category and $\sigma$ an admissible gluing.
  The subcategory $ho.cy_{\Sigma_{\sigma}}(C)$ of $cy_{\Sigma_{\sigma}}(C)$ is
  defined as follows.

  Let $p$ be the central point of the standard disk. An object of
  $ho.cy_{\Sigma_{\sigma}}(C)$ is defined to be the single $C$-colored
  point $(p, X)$ for some $X \in \Obj(C)$. A morphism from $(p, X)$
  to $(p, Y)$ is the equivalence class in which the following
  string net lives.

  \begin{center}
  \includegraphics[height=5cm]{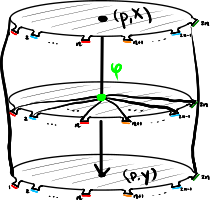}
  \end{center}

  \noindent Clearly, $ho.cy_{\sigma}(C)$ is a subcategory of $cy_{\sigma}(C).$
\end{definition}

\begin{theorem}[equivalence of reduction]
  The inclusion
  functor $$\iota: ho.cy_{\Sigma_{\sigma}}(C) \xrightarrow{\subset} cy_{\Sigma_{\sigma}}(C)$$
  is an equivalence of categories. Clearly, it is additive.
\end{theorem}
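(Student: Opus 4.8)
The plan is to check that the inclusion $\iota$ is essentially surjective and fully faithful; since it is visibly additive, that suffices. The key idea throughout is that the category $cy_{\Sigma_\sigma}(C)$ is built from string nets in the $3$-fold $\Sigma_\sigma \times [0,1]$, and the sliding lemma (\ref{lemma/sliding-lemma}) together with the fundamental lemma of $\Omega$ (\ref{lemma/fundamental-lemma-of-Omega}) lets us ``condense'' any configuration of $C$-colored points and arcs down to a single $C$-colored point at the center $p$ of the disk.

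\medskip

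\noindent\textbf{Essential surjectivity.} Given an arbitrary object $c$ of $cy_{\Sigma_\sigma}(C)$ — a finite collection of $C$-colored framed points on the surface — I would first use isotopy inside $\Sigma_\sigma \times [0,1]$ to push all of these points toward the central point $p$, replacing the collection by a single point colored by the tensor product of the individual colors (fused in some fixed order along a chosen path). This produces an object $(p, X)$ of $ho.cy_{\Sigma_\sigma}(C)$ together with a string net in $\Sigma_\sigma\times[0,1]$ realizing the fusion, which one checks is an isomorphism in $cy_{\Sigma_\sigma}(C)$ (its inverse being the reverse fusion, the composition being the identity by the Reshetikhin-Turaev relation that defines string nets). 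Hence every object of $cy_{\Sigma_\sigma}(C)$ is isomorphic to one in the image of $\iota$.

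\medskip

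\noindent\textbf{Full faithfulness.} Fullness is the crux: given a general string net $\Gamma$ in $\Sigma_\sigma \times [0,1]$ with boundary condition $(p,X)$ at the bottom and $(p,Y)$ at the top, I must show it is equal, modulo the string-net relations, to one of the ``standard'' pictures defining $Mor_{ho.cy_{\Sigma_\sigma}(C)}((p,X),(p,Y))$. The strategy is: (i) isotope $\Gamma$ so that it lies in a neighborhood of the $1$-skeleton coming from the leg-gluing data of $\sigma$, i.e. the arcs of $\Gamma$ run along the disk and through the glued legs; (ii) at each glued leg, insert a copy of $\Omega$ transverse to the leg using \ref{lemma/fundamental-lemma-of-Omega} (which expresses $1_W$, hence costs nothing), and then apply the sliding lemma \ref{lemma/sliding-lemma} to make all strands passing through that leg transparent to it; (iii) having absorbed all the topological complexity into finitely many $\Omega$'s at the legs, collapse what remains onto a single strand through $p$, so that the string net acquires the standard normal form. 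Faithfulness (injectivity on morphisms) then follows because this normalization procedure is reversible up to the string-net relations, so two string nets in $ho.cy_{\Sigma_\sigma}(C)$ that become equal in $cy_{\Sigma_\sigma}(C)$ were already equal, their standard forms being literally the same element of the quotient.

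\medskip

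\noindent\textbf{Main obstacle.} The hard part will be step (i)--(iii) of fullness: namely showing that \emph{every} string net — with arbitrarily many strands wrapping through the genus in complicated ways — can be brought to the normal form by a \emph{finite} sequence of isotopies, insertions of $\Omega$ via \ref{lemma/fundamental-lemma-of-Omega}, and applications of the sliding lemma \ref{lemma/sliding-lemma}. This is essentially a normal-form theorem for string nets on $\Sigma_\sigma$, and making the inductive bookkeeping precise (on, say, the number of times strands cross the legs, or the complexity of $\Gamma$ relative to the handle decomposition induced by $\sigma$) is where the real work lies; the rest is formal.
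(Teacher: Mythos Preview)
Your overall architecture (essential surjectivity plus full faithfulness) matches the paper, and your essential-surjectivity argument is essentially the same as the paper's compression-and-fuse procedure. However, your treatment of full faithfulness diverges from the paper in a way worth noting.

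For \emph{faithfulness}, you are working harder than necessary. Since $ho.cy_{\Sigma_\sigma}(C)$ is defined as a genuine subcategory of $cy_{\Sigma_\sigma}(C)$ with the \emph{same} string-net relations, the inclusion on hom-spaces is literally an inclusion of a subspace; injectivity is immediate, and there is no need to argue that a normalization procedure is reversible.

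For \emph{fullness}, the paper does not use $\Omega$-insertions or the sliding lemma at all here. Its argument is pure isotopy plus Reshetikhin--Turaev evaluation: push the string net away from the ends of the legs, compress it into a central bar, replace the strands passing through each leg by a single strand colored by their tensor product (this is an RT relation inside a ball, not an $\Omega$-trick), compress vertically, and finally replace the remaining tangle in the central ball by a single coupon. Your step~(ii) --- inserting $\Omega$ via Lemma~\ref{lemma/fundamental-lemma-of-Omega} and then sliding --- is unneeded machinery at this stage, and the way you describe it conflates two different uses of $\Omega$ (decomposing $1_W$ into simples versus an $\Omega$-colored circle that enables sliding). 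The $\Omega$ and sliding tools do appear in the paper, but only later, in building the functors $j$ and $G$ between $ho.cy$ and $Z_\sigma(C)$.

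Consequently, the ``main obstacle'' you flag --- a careful inductive normal-form theorem tracking how many times strands cross the legs --- does not arise in the paper's proof: once multiple strands through a leg are fused into one by RT evaluation, the remaining isotopy is straightforward and handled pictorially without induction.
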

\begin{Proof}
  By a basic lemma in category theory, it is enough to show that
  $\iota$ is fully faithful and essentially
  surjective. \\

  \noindent (Essentially surjective) Recall that a typical object
  of $cy_{\Sigma_{\sigma}}(C)$ is a finite collection of $C$-colored points
  on $\Sigma_{\sigma}$. It suffices to find an equivalent object of the
  form $(p, X)$, for some $X \in \Obj(C)$. This can be done by the
  following reductions.

  \begin{enumerate}
    \item Slightly push the points on the boundary into the
          smaller side.
    \item Compress the points from the legs into the disk.
    \item Then compress further for the points to stay in a small
          unit disk in the middle.
    \item Project the objects to a fixed line.
    \item Take their tensor products.
  \end{enumerate}

  \noindent Each step above can be realized as an isomorphism in
  $CY_{\Sigma_{\sigma}}(C)$, so every object is isomorphic to an object in
  $ho.cy_{\Sigma_{\sigma}}(C)$.

  \begin{center}
  \includegraphics[width=15cm]{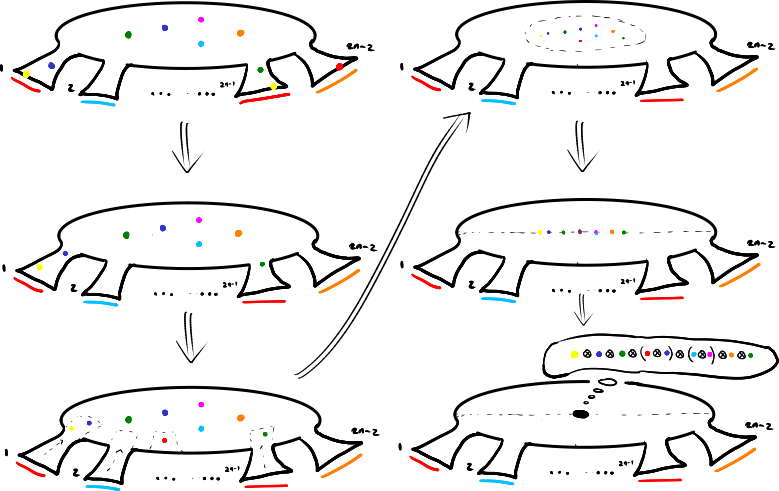}
  \end{center}

  \noindent (Fully faithful) We ought to show that
  $$
  Hom_{ho.cy_{\Sigma_{\sigma}}(C)}((p,X), (p,Y)) \xrightarrow{\iota} Hom_{cy_{\Sigma_{\sigma}}(C)}((p,X), (p,Y))
  $$
  is an equivalence of vector spaces. Clearly, it is linear and
  injective, as the quotient relations on both sides are the
  same. To prove surjectivity, we have to show that any arbitrary
  string net with given boundary condition is equivalent to a
  stringnet given in the definition of $ho.cy_{\Sigma_{\sigma}}(C)$. This
  can be done by a similar compression process as in the proof of
  essential surjectivity.

  \begin{enumerate}
    \item Push the stringnets away from the end of the legs.
    \item Push the stringnets away from the legs.
    \item Compress everything into a fixed central bar.
    \item Replace the stringnets through boundaries with one
          strand for each leg by taking tensor products by using
          the Reshetikhin-Turaev evaluation.
    \item Then compress vertically.
    \item Then finally replace replace the tangled mess in the
          middle by a morphism.
  \end{enumerate}

  \begin{center}
  \includegraphics[width=15cm]{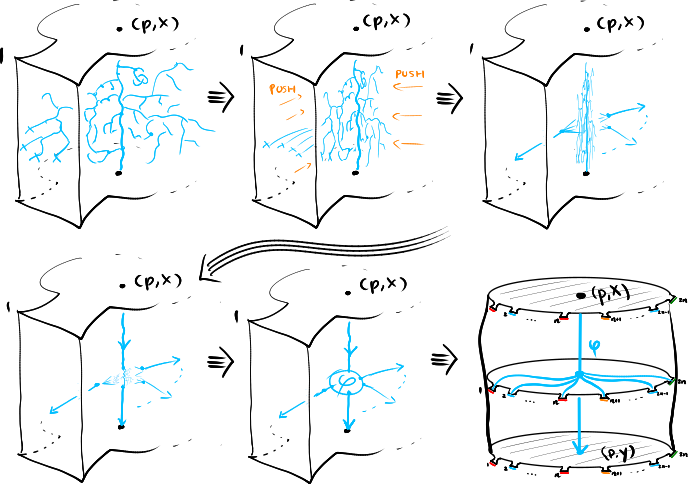}
  \end{center}
\end{Proof}

\subsubsection{topology $\rightarrow$ algebra}

In this subsubsection, we aim to construct a morphism
$$\Kar(ho.cy_{\Sigma_{\sigma}}(C)) \xrightarrow{J} Z_{\sigma}(C).$$

\noindent As $Z_{\sigma}(C)$ is abelian (\ref{thm/cat-prop-of-Z}), by
\ref{lemma/additive-functor-to-abelian-cat-lifts-to-karoubi} we
only have to construct an additive functor
$$
ho.cy_{\Sigma_{\sigma}}(C) \xrightarrow{j} Z_{\sigma}(C).
$$

To define $j$, recall that a typical object of $ho.cy_{\Sigma_{\sigma}}(C)$
is a colored point $(p, X)$, where $p$ denotes the central point
of the standard disk. Define its image under $j$ to be $X_{\sigma}$ as
in (\ref{def/x-sigma}). A typical morphism from $(p, X)$ to
$(p, Y)$ is a linear combination of the equivalence classes of
the stringnets like $\Gamma$. Define the image of $[\Gamma]$ under $j$ to
be
\begin{center}
  $[\Gamma]$ := \includegraphics[height=5cm]{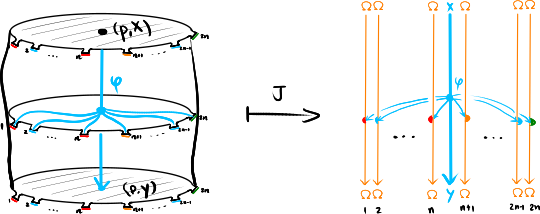}
\end{center}
where $\Omega$ is the shorthand notation given in \ref{def/Omega}
crossings mean the
braidings of $C$, and the nontrivial pairs of intertwiners are
given in \cite[(1.8)]{kirillov-balsam/turaev-viro-I}. Extend the
definition additively, and then we have our desired additive
functor $j$.

\subsubsection{topology $\leftarrow$ algebra}

In this subsubsection, we construct a functor
$$\Kar(ho.cy_{\Sigma_{\sigma}}(C)) \xleftarrow{G} Z_{\sigma}(C).$$
Recall that a typical object in $Z_{\sigma}(C)$ is $(X, \gamma)$,
where $X \in \Obj(C)$ and $\gamma$ is a set of half-braidings

$$\gamma = \{ \gamma_{[1]}, \gamma_{[2]}, \ldots \gamma_{[2n]}\}$$

\noindent satisfying some relations \ref{def/half-braiding}.
Define the image of $(X,\gamma)$ under $G$ to be $((p, X), \pi_{\gamma})$,
where $p$ denotes the central point of the standard disk, and
$\pi_{\gamma}$ to be the equivalence class of the following stringnets.

\begin{center}
$\frac{1}{|\Omega|^{n}}$\includegraphics[width=7cm]{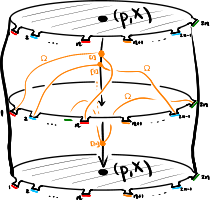}
\end{center}

\noindent That $\pi_{\gamma}$ is a projection follows from the
commutative relations (\ref{def/comm-relation}) and a graphical
lemma \cite[(3.7)]{kirillov/string-net}. For morphisms, define
the image of $(X, \gamma) \xrightarrow{f} (Y, \beta)$ under $G$ to be

\begin{center}
\includegraphics[width=7cm]{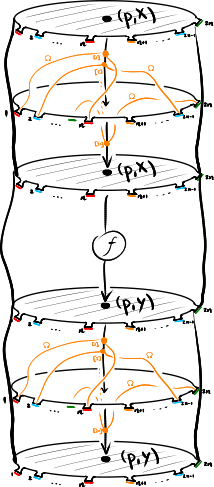}
\end{center}

\noindent which is indeed a morphism in the Karoubi envelope
because $\pi_{\gamma}$ and $\pi_{\beta}$ are idempotents.

\subsubsection{topology $\leftrightarrow$ algebra}

In this subsection, we will show that $J \circ G$ and $G \circ J$ are
equivalent to identity functors.

That $G \circ J \simeq 1$ follows directly from the same argument of
\cite[Figure 15]{kirillov/string-net}; we just have to do it $n$
times. On the other hand, in fact we have $J \circ G = 1$. Indeed,
denote $(J \circ G)((X,\gamma)) = (X',\gamma')$. That $X' = X$ follows directly
from the sliding lemma \ref{lemma/sliding-lemma},
and that $\gamma' = \gamma$ follows
from the sliding lemma, and the fact that half-braidings are by
definition monoidal.

Finally, since $G$ and $J$ are additive, this proves the
equivalence of both sides as abelian categories. Therefore,
$CY_{\Sigma_{\sigma}}(C)$ and $Z_{\sigma}(C)$ are equivalent as finite
semisimple abelian categories.

\section{Outlook and remarks} \label{section/outlook-and-remarks}

In this section, we describe some open directions and more work
in progress.

\vspace{5mm}

\noindent\textbf{Surface combinatorics}

Let $\Sigma$ be an open surface of a fixed topological type. In
general, there are many different admissible gluings $\sigma$ with
$\Sigma_{\sigma} \simeq \Sigma$. As Crane-Yetter theory is topological, we have many
differently-presented categories that are in fact equivalent.

Moreover, the excision property
\ref{thm/excision-principle}
\[
CY_{\Sigma_{1} \cup \Sigma_{2}}(C) \simeq CY_{\Sigma_{1}}(C) \boxtimes_{CY_{{\Sigma_{1} \cap \Sigma_{2}}}(C)} CY_{\Sigma_{2}}(C)
\]
provides more ways to obtain $\Sigma$. It is an interesting
work to establish explicit equivalences.

\vspace{5mm}

\noindent\textbf{Surfaces without punctures}

The main statement of this work provides a nice description for
the Crane-Yetter theory of any surface with at least one
puncture. While the case without punctures can be taken care
easily by patching with the excision principle, the resulting
categories are described in terms of balanced (Deligne) tensor
products, which are more obscure. The author believes that there
should be a better description.

\vspace{5mm}

\noindent\textbf{Module categorical structures}

The Drinfeld center with the stacking tensor product
$\overline{\otimes}$ acts on $CY_{\Sigma}(C)$ in possibly multiple ways. We
will establish the module categorical structure for $CY_{\Sigma}(C)$
explicitly in future work.

\vspace{5mm}

\noindent\textbf{Concrete computations}

Compute examples for Crane-Yetter theory in dimension two and
three explicitly and concretely, especially for premodular
categories $C$ that are neither modular nor symmetric. There are
a few candidates. The first is the even part of the
semi-simplification of $\Rep(U_{q}\mathfrak{sl}_{2})$ for special
$q$. Another family of examples are given by
$\Rep(\mathfrak{X})$, where $\mathfrak{X}$ denotes a finite
$2$-group \cite{bantay/chars-xmods}. Compute C-Y invariants for
3- and 4- folds directly from our result, and seek for insights.

\vspace{5mm}

\noindent\textbf{Minimal data for Crane-Yetter}

When $C$ is modular, $CY$ in dimension two trivializes to the
number of punctures. In particular, for closed surfaces $\Sigma$,
$CY_{\Sigma}(C)$ trivialize to the Muger center of $C$, which is just
$(Vect)$ due to the modularity \cite[Prop
8.20.12]{egno/tensor-cats}. On the other hand, when $C$ is not
modular, $CY_{\Sigma}(C)$ does no seem to depend on full information
from $C$. Find the minimal data needed in order to determine
$CY_{\Sigma}(C)$.

\vspace{5mm}

\noindent\textbf{Piecewise-linear setting}

For exposing stringnets with simplicity, we assume smooth
structures for our surfaces. Crane-Yetter theory can be made
precise in the PL setting. This is not necessary and should be
removed in future work. For an analogue in one dimension lower,
see \cite{kirillov-balsam/turaev-viro-I}.

\section{Appendix} \label{section/appendix}

Most sections in the appendix are added for the sake of
completeness.

\subsection{Abelian categories}

A complete definition of an abelian category is given in this
subsection. In particular, see \ref{def/abelian-category}.

\begin{definition}[pre-additive category]\label{def/preadditive-category} \cite[I.8. p.28]{mac-lane/cats-for-working-mathematician}
  A pre-additive category, or called an Ab-category, is a
  category $A$ in which each hom-set is an (additive) abelian
  group, with respect to which the composition maps are bilinear.
\end{definition}

\begin{definition}[biproduct]\label{def/biproduct}\cite[VIII.2. Definition]{mac-lane/cats-for-working-mathematician}
  Let $A$ be an pre-additive category
  (\ref{def/preadditive-category}). For each pair of $A$-objects
  $(a,b)$, define their biproduct to be the pair
  $(c,\{p_{a},p_{b},i_{a},i_{b}\})$, where $c$ is an $A$-object,
  $p_{a}$ and $p_{b}$ are morphisms from $c$ to $x$, $i_{a}$ and
  $i_{b}$ are morphisms from $a$ and $b$ to $c$, with the
  equations satisfied:
  \begin{align}
    1_{a} &= p_{a}i_{a} \\
    1_{b} &= p_{b}i_{b} \\
    1_{c} &= i_{a}p_{a} + i_{b}p_{b}.
  \end{align}
\end{definition}

\begin{definition}[initial object]\label{def/initial-object}\cite[p.20]{mac-lane/cats-for-working-mathematician}
  Let $C$ be a category. An initial object $s$ in $C$ is a
  $C$-object such that to each $C$-object $a$ there is exactly
  one $C$-morphism $s \to a$.
\end{definition}

\begin{definition}[terminal object]\label{def/terminal-object}\cite[p.20]{mac-lane/cats-for-working-mathematician}
  Let $C$ be a category. A terminal object $t$ in $C$ is an
  $C$-object such that to each object $a$ there is exactly one
  morphism $a \to t$.
\end{definition}

\begin{definition}[null object]\label{def/null-object}\cite[p.20]{mac-lane/cats-for-working-mathematician}
  Let $C$ be a category. A null object $z$ is a $C$-object which
  is both initial (\ref{def/initial-object}) and terminal
  (\ref{def/terminal-object}).
\end{definition}

\begin{definition}[additive category]\label{def/additive-category}\cite[VIII.2. p.196]{mac-lane/cats-for-working-mathematician}
  An additive category $A$ is an pre-additive category
  (\ref{def/preadditive-category}) that satisfies the following
  conditions
  \begin{itemize}
    \item $A$ has a null object (\ref{def/null-object}).
    \item $A$ has a binary biproduct for each pair of $A$-objects (\ref{def/biproduct}).
  \end{itemize}
\end{definition}

\begin{definition}[zero morphism] \label{def/zero-arrow}
  \cite[VIII.1.]{mac-lane/cats-for-working-mathematician} Let $C$
  be a category with a null object $z$ (\ref{def/null-object}).
  Let $a, b$ be $C$-objects. The zero morphism from $a$ to $b$ is
  defined to be the composition of the morphism from $a$ to $z$ and
  the morphism from $z$ to $b$
  $$(a \xrightarrow{0} b) := (a \to z \to b).$$
\end{definition}

\begin{definition}[monic morphism]\label{def/monic-arrow}\cite[p.19]{mac-lane/cats-for-working-mathematician}
  Let $C$ be a category. A monic morphism is a $C$-morphism
  $a \xrightarrow{m} b$ such that the left cancellation rule
  holds:
  \begin{equation}
    (mf=mg) \Rightarrow (f=g).
  \end{equation}
\end{definition}

\begin{definition}[epi morphism]\label{def/epi-arrow} \cite[p.19]{mac-lane/cats-for-working-mathematician}
  Let $C$ be a category. An epi morphism is a $C$-morphism
  $a \xrightarrow{m} b$ such that the right cancellation rule
  holds:
  \begin{equation}
    (fe=ge) \Rightarrow (f=g).
  \end{equation}
\end{definition}

\begin{definition}[diagonal functor] \label{def/diagonal-functor}
  Let $C$ and $J$ be categories. The diagonal functor $\Delta$
  from $C$ to $C^{J}$ is defined to send each $C$-object $c$ to
  the constant functor $\Delta_{c}$, and to send each $C$-morphism
  $c \xrightarrow{f} c$ to the constant natural transform
  $\Delta_{f}$. (cf
  \cite[p.67]{mac-lane/cats-for-working-mathematician}).
\end{definition}

\begin{definition}[universal morphism] \label{def/universal-arrow}
  \cite[III.1.]{mac-lane/cats-for-working-mathematician} Let $C$
  and $D$ be categories. Let $c$ be an $C$-object. Let
  $D \xrightarrow{S} C$ be a functor. A universal morphism from $c$
  to $S$ is a pair $(r,u)$, where $r$ is $D$-object and
  $c \xrightarrow{u} Sr$ is an $C$-morphism that satisfies the
  following condition:

  For each pair $(d,f) \in Obj(D) \times C(c,Sd)$, there is a
  unique $D$-morphism $r \xrightarrow{f'} d$ with $Sf' \circ u=f$.
\end{definition}

\begin{definition}[categorical
  limit] \label{def/categorical-limit}
  \cite[III.4.]{mac-lane/cats-for-working-mathematician} Let $C$
  and $J$ be categories and $J \xrightarrow{F} C$ be a functor. A
  limit for the functor $F$ is defined to be a universal morphism
  (\ref{def/universal-arrow}) $(r,v)$ from $\Delta$ to $F$, where
  $\Delta$ is the diagonal functor (\ref{def/diagonal-functor})
  from $C$ to $C^{J}$.
\end{definition}

\begin{definition}[equalizer]\label{def/equalizer}\cite[III.4.]{mac-lane/cats-for-working-mathematician}
  Let $C$ be a category, $a, b$ be $C$-objects, and $f, g$ be
  $C$-morphisms from $a$ to $b$. The equalizer for the pair $(f,g)$
  is defined to be the limit (\ref{def/categorical-limit}) of the
  corresponding functor $P \xrightarrow{J} C$, where $P$ denotes
  the category with exactly two objects $0, 1$ and two
  non-identity morphisms $0 \rightrightarrows 1$.
\end{definition}

\begin{definition}[kernel]\label{def/kernel}\cite[VIII.1.]{mac-lane/cats-for-working-mathematician}
  Let $C$ be a category with a null object
  (\ref{def/null-object}). A kernel of a morphism
  $a \xrightarrow{f} b$ is defined to be an equalizer
  (\ref{def/equalizer}) for the pair $(f, a \xrightarrow{0} b)$,
  where $0$ denotes the zer morphism (\ref{def/zero-arrow}).
\end{definition}

\begin{definition}[cokernel]\label{def/cokernel}\cite[p.192]{mac-lane/cats-for-working-mathematician}
  The notion of a cokernel is the dual of the notion of a kernel
  (\ref{def/kernel}).
\end{definition}

\begin{definition}[pre-abelian
  category]\label{def/pre-abelian-category}
  An abelian category is an additive category
  (\ref{def/preadditive-category}) in which ever morphism has a
  kernel (\ref{def/kernel}) and a cokernel (\ref{def/cokernel}).
\end{definition}

\begin{lemma} \label{lemma/arrow-factors-thru-coim-and-im} Let
  $A$ be a pre-abelian category. Then each morphism $X \xrightarrow{f} Y$ in $A$
  has a canonical factorization
  \cite[I.1]{iversen/cohomology-of-sheaves}
  \begin{equation}
    f = \Big( X \to \cok(\ker(f)) \xrightarrow{f'} \ker(\cok(f)) \to Y \Big).
  \end{equation}
\end{lemma}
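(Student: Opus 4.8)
The plan is to build the factorization purely from the universal properties of kernels and cokernels, and the one thing to watch is \emph{not} to claim more than pre-abelianness allows. Write $k\colon K\to X$ for $\ker(f)$, $c\colon Y\to Q$ for $\cok(f)$, let $q\colon X\to\cok(\ker f)$ be the cokernel of $k$ (the coimage of $f$), and let $j\colon\ker(\cok f)\to Y$ be the kernel of $c$ (the image of $f$); all four objects exist since $A$ is pre-abelian. The goal is to produce a unique morphism $f'\colon\cok(\ker f)\to\ker(\cok f)$ with $f=j\circ f'\circ q$.

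First I would factor $f$ through the coimage. Since $k=\ker(f)$ we have $f\circ k=0$, so by the universal property of the cokernel $q=\cok(k)$ there is a unique $g\colon\cok(\ker f)\to Y$ with $f=g\circ q$.

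Next I would factor $g$ through the image. This is the only step invoking a general fact: a cokernel is a coequalizer, hence an epimorphism, so $q$ is epi. From $c\circ g\circ q=c\circ f=0$ and $q$ epi we deduce $c\circ g=0$, and since $j=\ker(c)$ is universal among morphisms killed by $c$, there is a unique $f'\colon\cok(\ker f)\to\ker(\cok f)$ with $g=j\circ f'$. Composing gives $f=g\circ q=j\circ f'\circ q$, the asserted factorization. The outer maps $q$ and $j$ are canonical, being determined up to unique isomorphism by the cokernel/kernel they represent, and $f'$ is unique: $j$ is monic (it is a kernel, hence an equalizer) and $q$ is epi, so $j\circ f'\circ q=j\circ f''\circ q$ forces $f'=f''$.

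There is no genuine obstacle here; the only point requiring care—as opposed to a routine diagram chase through universal properties—is that in a merely pre-abelian category one must \emph{not} assert that $f'$ is an isomorphism. That extra requirement is precisely (part of) the abelian axiom and fails in general. One may alternatively cite \cite[I.1]{iversen/cohomology-of-sheaves} directly.
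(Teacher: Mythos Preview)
Your proof is correct and is exactly the standard argument; the paper itself does not supply a proof for this lemma but simply cites \cite[I.1]{iversen/cohomology-of-sheaves}, so there is nothing to compare against beyond noting that your write-up spells out precisely what that reference contains.
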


\begin{definition}[exact category] \label{def/exact-category} An
  exact category is a pre-abelian category in which the middle
  morphism of the canonical factorization
  (\ref{lemma/arrow-factors-thru-coim-and-im}) $f'$ of each morphism
  $X \xrightarrow{f} Y$ is an isomorphism.
\end{definition}

\begin{definition}[abelian category]\label{def/abelian-category}\cite[VIII.3. Definition]{mac-lane/cats-for-working-mathematician}
  An abelian category is an pre-abelian category
  (\ref{def/preadditive-category}) in which every monic morphism
  (\ref{def/monic-arrow}) is a kernel, and every epi morphism
  (\ref{def/epi-arrow}) is a cokernel.
\end{definition}

\begin{lemma}\label{lemma/exact-pre-abelian-is-abelian}
  Let $A$ be a pre-abelian category. Then the followings are
  equivalent.
  \begin{itemize}
    \item $A$ is abelian.
    \item $A$ is exact.
  \end{itemize}
\end{lemma}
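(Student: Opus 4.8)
\begin{Proof}
I would prove the two implications separately, starting with the easy one, ``$A$ exact $\Rightarrow$ $A$ abelian''. Let $m\colon X\to Y$ be a monic morphism. Then $\ker(m)=0$, so $\cok(\ker m)$ is canonically $(X,\id_X)$, and the canonical factorization of \ref{lemma/arrow-factors-thru-coim-and-im} degenerates to $m=\big(X\xrightarrow{m'}\ker(\cok m)\hookrightarrow Y\big)$. Exactness says $m'$ is an isomorphism, so $m$ is, up to precomposition with an isomorphism, the kernel $\ker(\cok m)\hookrightarrow Y$; since the universal property of a kernel is unaffected by precomposing with an isomorphism, $m$ is itself a kernel (namely $m=\ker(\cok m)$). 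The dual argument applied to an epic morphism $e$, for which $\cok(e)$ is trivial, shows that $e$ is a cokernel. Hence $A$ satisfies \ref{def/abelian-category}.

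For the converse, ``$A$ abelian $\Rightarrow$ $A$ exact'', fix $f\colon X\to Y$ and write its canonical factorization as $f=\big(X\xrightarrow{q}Q\xrightarrow{f'}I\xrightarrow{i}Y\big)$ with $q=\cok(\ker f)$ epic and $i=\ker(\cok f)$ monic. Put $\hat f:=f'q$ (the unique arrow with $i\hat f=f$) and $\tilde f:=if'$ (the unique arrow with $\tilde fq=f$). I would first record the purely formal reductions: $\hat f$ epic forces $f'$ epic (a second factor of an epimorphism is an epimorphism), $\tilde f$ monic forces $f'$ monic, and in an abelian category a morphism that is both monic and epic is an isomorphism --- if $g$ is monic then $g=\ker(h)$ for some $h$, hence $hg=0$, and if $g$ is also epic then $h=0$, so $g=\ker(0)$ is an isomorphism. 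Thus it suffices to show that in an abelian category $\tilde f$ is monic and, dually, $\hat f$ is epic.

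To see that $\tilde f$ is monic, suppose $v\colon V\to Q$ with $\tilde fv=0$. Form the pullback of $q$ along $v$; since $A$ is pre-additive with kernels and biproducts this exists and may be realized as $P:=\ker\!\big(X\oplus V\xrightarrow{(q,\,-v)}Q\big)$, with projections $p_X\colon P\to X$, $p_V\colon P\to V$ satisfying $qp_X=vp_V$. As $q$ is epic, so is $(q,-v)$, and epimorphisms are stable under pullback in an abelian category, so $p_V$ is epic. Now $fp_X=\tilde f\,q\,p_X=\tilde f\,v\,p_V=0$, so $p_X$ factors through $\ker f$; since $q=\cok(\ker f)$ this yields $qp_X=0$, hence $vp_V=qp_X=0$, and $p_V$ epic forces $v=0$. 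The dual argument (pushout of $i$, stability of monomorphisms under pushout) shows $\hat f$ is epic, and therefore $f'$ is an isomorphism for every $f$.

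The one step that genuinely uses the abelian hypotheses, rather than just kernels, cokernels and biproducts, is the stability of epimorphisms under pullback, and this is where I expect the real work to sit. Using the axiom ``every epimorphism is a cokernel'' one first checks that every epimorphism equals the cokernel of its own kernel, so $(q,-v)=\cok\!\big(P\hookrightarrow X\oplus V\big)$; then, given $\phi\colon V\to T$ with $\phi p_V=0$, the map $\phi\pi_V\colon X\oplus V\to T$ kills $P$ and hence factors through $(q,-v)$ as $\psi(q,-v)$, and precomposing with the summand inclusions $X\to X\oplus V$ and $V\to X\oplus V$ forces first $\psi q=0$ (so $\psi=0$, as $q$ is epic) and then $\phi=0$. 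Dually one invokes ``every monomorphism is a kernel'' for the pushout statement. Everything else is formal bookkeeping with the canonical factorization, so I would expect this pullback-stability lemma to be the only genuine obstacle.
\end{Proof}
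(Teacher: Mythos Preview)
The paper does not actually supply a proof of this lemma: it is listed in the appendix among standard background facts on abelian categories, stated for completeness but left unproved. So there is no ``paper's own proof'' to compare against.

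Your argument is correct and is the standard one. The direction ``exact $\Rightarrow$ abelian'' is immediate from the canonical factorization, exactly as you wrote. For ``abelian $\Rightarrow$ exact'' your strategy --- reduce to showing $\tilde f$ monic and $\hat f$ epic, then use a pullback along $q$ together with stability of epimorphisms under pullback --- is the usual route (it is essentially Freyd's proof). Your verification that in an abelian category every epimorphism is the cokernel of its own kernel, and hence that $(q,-v)=\cok(P\hookrightarrow X\oplus V)$, is the right way to justify the pullback-stability step, and your concluding computation with the biproduct inclusions is clean. One small presentational point: when you say ``a second factor of an epimorphism is an epimorphism'' you mean the \emph{outer} factor (if $f'q$ is epic then $f'$ is epic), which is correct; it may be worth phrasing this unambiguously, since ``second'' can be read either way.
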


\begin{lemma}\label{lemma/summary-for-abelian-category}
  \noindent To summarize, an abelian category is a category such
  that the followings are satisfied.
  \begin{itemize}
    \item (pre-additivity) Every hom set is an abelian group such
          that every composition is bilinear.
    \item (additivity) A null object and binary biproducts exist.
    \item (pre-abelianity) Every morphism has a kernel and cokernel.
    \item (exactness) Canonical factorizations induce isomophisms
          between the images and coimages
      $$\cok \circ \ker(-) \xrightarrow{~} \ker \circ \cok(-).$$
  \end{itemize}
\end{lemma}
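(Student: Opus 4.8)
The plan is to obtain this characterization by simply unwinding the chain of definitions assembled in this subsection, with the single nontrivial input being \ref{lemma/exact-pre-abelian-is-abelian}. By Definition \ref{def/abelian-category}, an abelian category is a pre-abelian category in which every monic morphism is a kernel and every epi morphism is a cokernel. So the task splits into two essentially bookkeeping parts: expanding the phrase ``pre-abelian category'' into the first three bullets, and replacing ``every monic is a kernel, every epi is a cokernel'' by the exactness condition in the fourth bullet.

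For the first part, I would chase definitions downward. By Definition \ref{def/pre-abelian-category}, a pre-abelian category is an additive category in which every morphism has a kernel (\ref{def/kernel}) and a cokernel (\ref{def/cokernel}); the latter clause is precisely the ``pre-abelianity'' bullet. By Definition \ref{def/additive-category}, an additive category is a pre-additive category (\ref{def/preadditive-category}) that possesses a null object (\ref{def/null-object}) and a binary biproduct (\ref{def/biproduct}) for every pair of objects; these two clauses are exactly the ``pre-additivity'' and ``additivity'' bullets. I would also remark that the zero morphisms (\ref{def/zero-arrow}) needed even to speak of kernels and cokernels exist as soon as a null object does, so the list of four conditions is internally consistent.

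For the second part, Lemma \ref{lemma/arrow-factors-thru-coim-and-im} provides, in any pre-abelian category, the canonical factorization of a morphism $f$ through $\cok(\ker f) \xrightarrow{f'} \ker(\cok f)$, and Definition \ref{def/exact-category} declares such a category exact exactly when every comparison map $f'$ is an isomorphism — which is the content of the ``exactness'' bullet. The substantive step is then to invoke Lemma \ref{lemma/exact-pre-abelian-is-abelian} directly: for a pre-abelian category, being abelian in the sense of \ref{def/abelian-category} is equivalent to being exact. Chaining the two equivalences ``abelian $\Leftrightarrow$ pre-abelian $+$ exact'' and ``pre-abelian $+$ exact $\Leftrightarrow$ the four bullets'' completes the proof.

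The only genuinely substantive ingredient is the appeal to \ref{lemma/exact-pre-abelian-is-abelian}, and that is where the real obstacle would lie if one had to argue from scratch: one must match the ``monic $=$ kernel, epi $=$ cokernel'' formulation of abelianness with the invertibility of the comparison map $f'$. The standard route uses the fact that in a pre-abelian category $f'$ is always both monic and epi, together with the observation that a morphism which is simultaneously monic and epi is an isomorphism once every monic is a kernel and every epi a cokernel; conversely, exactness lets one exhibit an arbitrary monic as $\ker(\cok m)$ up to the isomorphism $m'$, hence as a kernel, and dually for epis. Since \ref{lemma/exact-pre-abelian-is-abelian} is already established, the present statement reduces, as indicated, to assembling the definitions.
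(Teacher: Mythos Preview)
Your proposal is correct and matches the paper's intent: the paper presents this lemma as a literal ``summary'' with no separate proof environment, relying on exactly the chain of definitions \ref{def/preadditive-category}--\ref{def/abelian-category} together with Lemma \ref{lemma/exact-pre-abelian-is-abelian} that you invoke. Your unwinding of those definitions and your identification of \ref{lemma/exact-pre-abelian-is-abelian} as the sole nontrivial input is precisely the implicit argument.
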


\begin{lemma}
  Let $C$ be an abelian category and $D$ be an additive category.
  Suppose there is an additive equivalence of categories
  $C \xrightarrow{F} D$. Then $D$ is abelian.
\end{lemma}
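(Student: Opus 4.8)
The plan is to invoke the characterization of abelian categories recorded in \ref{lemma/summary-for-abelian-category}: a category is abelian exactly when it is pre-additive, additive, pre-abelian, and exact. Since $D$ is assumed additive it is automatically pre-additive and additive, so the whole task is to check that $D$ is pre-abelian (every morphism has a kernel and a cokernel) and exact (the canonical middle map of the factorization in \ref{lemma/arrow-factors-thru-coim-and-im} is an isomorphism). Equivalently, I will transport these two properties across the equivalence from $C$, which is abelian and hence pre-abelian and exact.

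First I would fix a quasi-inverse $G \colon D \to C$ together with natural isomorphisms $GF \cong \id_C$ and $FG \cong \id_D$; because $F$ is an additive functor between additive categories, $G$ is automatically additive as well. The mechanism is the standard fact that an equivalence is simultaneously a left and a right adjoint of its quasi-inverse, hence both $F$ and $G$ preserve all limits and colimits that happen to exist, and both reflect isomorphisms; being additive, they also preserve the zero object and all zero morphisms. Consequently $F$ and $G$ carry kernels to kernels and cokernels to cokernels, since a kernel is precisely the equalizer of a morphism with the appropriate zero morphism (\ref{def/kernel}), dually for cokernels (\ref{def/cokernel}). For pre-abelianity, given $f \colon X \to Y$ in $D$, I would form $\ker(Gf)$ in $C$, which exists since $C$ is pre-abelian, apply $F$ to obtain a kernel of $FGf$, and then transport along $FG \cong \id_D$ — kernels being invariant under pre- or post-composition with an isomorphism — to conclude that $f$ has a kernel in $D$; the existence of cokernels in $D$ is entirely dual.

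For exactness I would apply $G$ to the canonical factorization $f = \big(X \to \cok(\ker f) \xrightarrow{f'} \ker(\cok f) \to Y\big)$; by the preservation properties above together with uniqueness of the canonical factorization, this is, up to the coherent natural isomorphisms, the canonical factorization of $Gf$, so $G(f')$ is identified with the middle map of $Gf$, which is an isomorphism because $C$ is exact (\ref{def/exact-category}). Since $G$ reflects isomorphisms, $f'$ is an isomorphism, so $D$ is exact, and \ref{lemma/exact-pre-abelian-is-abelian} then yields that $D$ is abelian. The main obstacle is not conceptual but bookkeeping: one must make the transport along $FG \cong \id_D$ and $GF \cong \id_C$ genuinely precise — in particular verifying compatibility with zero morphisms so that equalizers and coequalizers with zero are carried to one another, and that the canonical factorization, assembled from kernels and cokernels via their universal properties, is respected up to these coherent isomorphisms. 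This is the familiar statement that an equivalence of additive categories transports the pre-abelian and exact structure, but it is the only place where care is needed.
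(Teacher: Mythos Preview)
Your proposal is correct and follows essentially the same approach as the paper: both invoke \ref{lemma/summary-for-abelian-category} to reduce to checking pre-abelianity and exactness, then transport these across the equivalence from $C$. The paper's proof is terser---it simply lifts a morphism $\phi'$ in $D$ to $C$ via essential surjectivity and full faithfulness and asserts that kernels, cokernels, and the canonical factorization carry over---whereas you spell out the transport mechanism more carefully using the quasi-inverse $G$ and the preservation/reflection properties of equivalences; your extra bookkeeping is sound and the content is the same.
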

\begin{Proof}
  By \ref{lemma/summary-for-abelian-category}, we have to show
  that the additive category $D$ is pre-abelian and exact.

  Let $X' \xrightarrow{\phi'} Y'$ be a $D$-morphism. Pick
  $C$-objects $X$ and $Y$ such that $FX$ and $FY$ are isomorphic
  to $X'$ and $Y'$ respectively. Since
  $$C(X,Y) \simeq D(FX,FY) \simeq D(X',Y'),$$
  $\phi'$ has kernels and cokernels, and its canonical
  factorization induces isomorphisms between images and coimages.
\end{Proof}

\subsection{Semisimple category}
Throughout the whole section, assume that $\mathbb{k}$ is an
algebraically closed field of characteristic $0$.

\begin{definition}[subobject]\cite[1.3.5]{egno/tensor-cats}
  Let $C$ be a category and $X$ be a $C$-object. A subobject of
  $X$ is a monic $C$-morphism $Y \xrightarrow{f} X$.
\end{definition}

\begin{definition}[simple object]\cite[1.5.1]{egno/tensor-cats}
  Let $C$ be an abelian category. A simple object $X$ of $C$ is a
  nonzero $C$-object whose only subobjects are
  $\mathbb{0} \xrightarrow{0} X$ and $X \xrightarrow{\id_{X}} X$.
\end{definition}

\begin{definition}[semisimple object]\cite[1.5.1]{egno/tensor-cats}
  Let $C$ be an abelian category. A semisimple object $X$ of $C$
  is a direct sum of some simple objects of $C$.
\end{definition}

\begin{definition}[semisimple category]
  \cite[1.5.1]{egno/tensor-cats} A semisimple category is an
  abelian category whose objects are all semisimple.
\end{definition}

\begin{definition}[object of finite length] \cite[1.5.3]{egno/tensor-cats}
  Let $X$ be an object of an abelian category $C$. We say that
  $X$ is of finite length if there exists a positive integer $n$
  and a sequence of monic morphisms
  \begin{equation}
    \mathbb{0} = X_{0} \to X_{1} \to \cdots \to X_{n} = X
  \end{equation}
  each of whose cokernel object $X_{i+1}/X_{i}$ is simple. Call
  $n$ the length of this sequence.
\end{definition}

\begin{remark} Such a sequence is called a Jordan-Holder series
  for $X$. By Jordan-Holder theorem
  \cite[1.5.4]{egno/tensor-cats}, all Jordan-Holder series of $X$
  have the same length.
\end{remark}

\begin{definition}[length of an object]\cite[1.5.5]{egno/tensor-cats}
  Let $C$ be an abelian category and $X$ a $C$-object. The length
  $X$ is defined to be the length of one, thus all, of its
  Jordan-Holder series.
\end{definition}

\begin{definition}[linear category over a field]
  \cite[1.2.2]{egno/tensor-cats}
  Let $\Bbbk$ be a field. A $\Bbbk$-linear category is an
  additive category $C$ whose hom-spaces are $\Bbbk$-vector
  spaces, such that all compositions of morphisms are
  $\Bbbk$-linear maps.
\end{definition}

\begin{definition}[locally finite abelian category over a
  field]\label{def/locally-finite-abelian-category-over-a-field}\cite[1.8.1]{egno/tensor-cats}
  A locally finite category (or an Artinian category) over
  $\Bbbk$ is a $\Bbbk$-linear abelian category $C$ that satisfies
  the following conditions.
  \begin{itemize}
    \item Every object has finite length.
    \item Every hom space is a finite dimensional $\Bbbk$-vector space.
  \end{itemize}
\end{definition}

\begin{definition}[finite abelian category over a field]
  \label{def/finite-abelian-category-over-a-field}\cite[1.8.6]{egno/tensor-cats}
  A finite category abelian category $C$ over $\Bbbk$ is a
  locally finite abelian category over $\Bbbk$ such that
  \begin{itemize}
    \item $C$ has enough projectives, i.e. every simple object of
          $C$ has a projective cover.
    \item The set of isomorphism classes of simple objects is
          finite.
  \end{itemize}
\end{definition}

\begin{remark}
  By discussion before \cite[1.8.6]{egno/tensor-cats}, a finite
  $\Bbbk$-linear abelian category $C$ is equivalent to the
  category of finite dimensional modules over a finite
  dimensional $\Bbbk$-algebra $A$.
\end{remark}

\subsection{Tensor category}\label{section/tensor-category}

Recall that the Crane-Yetter theory $CY$ comes in a family, of
which member depends on a type of algebraic data called the
premodular categories. Despite its technical definition (finite
semisimple ribbon braided rigid tensor category), it does not
hurt too much to think of a premodular category as a higher
categorical analogue of a finite abelian group: the ``braided
tensor'' structure encodes the (higher) group operation, the
``rigid'' structure encodes the (higher) inverses, and the
``ribbon'' structure ensures that $(g^{\m 1})^{\m 1}$ is
equivalent to $g$.

In this section, a complete definition for a premodular category
\ref{def/premodular-category} is collected from
\cite{egno/tensor-cats}. Throughout the whole section, assume
that $\mathbb{k}$ is an algebraically closed field of
characteristic $0$.

\begin{definition}[monoidal category]\cite[2.1.1]{egno/tensor-cats}
  A monoidal category is a septuple
  $$(C,\otimes,a,\mathbb{1},\iota,l,r),$$
  that satisfies the pentagon axiom and the triangle axiom
  \cite[(2.2)]{egno/tensor-cats}, where $C$ is a category,
  $C \times C \xrightarrow{\otimes} C$ is a bifunctor,
  $$((-_{1} \otimes -_{2}) \otimes -_{3}) \xrightarrow{a} (-_{1} \otimes (-_{2} \otimes -_{3}))$$
  is a natural equivalence, $\mathbb{1}$ is an object in $C$,
  $\mathbb{1} \otimes \mathbb{1} \xrightarrow{\iota} \mathbb{1}$
  is an isomorphism, $(-) \xrightarrow{l} (\mathbb{1} \otimes -)$
  and $(-) \xrightarrow{r} (- \otimes \mathbb{1})$ are natural
  equivalences.

  We will abuse notations and denote the septuple by $C$. The
  bifunctor $\otimes$ is called the tensor product bifunctor, the
  pair $(\mathbb{1},\iota)$ is called the unit object, and the
  natural equivalence $a$ is called the associativity isomorphism
\end{definition}

\begin{definition}[duals of an object]\cite[2.10.1 and
  2.10.2]{egno/tensor-cats}
  Let $X$ be an object of a monoidal category
  $(C,\otimes,\mathbb{1},a,\iota,l,r)$. A left dual of $X$ is an
  object $L$ with two morphisms
  \begin{align}
    L \otimes X     \xrightarrow{ev} \mathbb{1} \\
    \mathbb{1} \xrightarrow{coev} X \otimes L
  \end{align}
  such that the compositions of the following the identity
  morphisms
  \begin{align}
    X &\xrightarrow{coev \otimes 1} (X \otimes L) \otimes X \xrightarrow{a}  X \otimes (L \otimes X) \xrightarrow{1 \otimes ev} X, \\
    L &\xrightarrow{1 \otimes coev} L \otimes (X \otimes L) \xrightarrow{a^{(\m 1)}} (L \otimes X) \otimes L \xrightarrow{ev \otimes 1} L.
  \end{align}

  Similarly, a right dual of $X$ is an object $R$ with two
  morphisms
  \begin{align}
    X \otimes R     \xrightarrow{ev'} \mathbb{1} \\
    \mathbb{1} \xrightarrow{coev'} R \otimes X
  \end{align}
  such that the compositions of the following are the identity
  morphisms
  \begin{align}
    X &\xrightarrow{1 \otimes coev'} X \otimes (R \otimes X) \xrightarrow{a^{(\m 1)}} (X \otimes R) \otimes X \xrightarrow{ev' \otimes 1} X, \\
    R &\xrightarrow{coev' \otimes 1} (R \otimes X) \otimes R \xrightarrow{a}  R \otimes (X \otimes R) \xrightarrow{1 \otimes ev'} R.
  \end{align}
\end{definition}

\begin{remark} It can be proved that the left (resp., right)
  dual, if exists, is unique up to isomorphism
  \cite[2.10.5]{egno/tensor-cats}. We will denote it by
  $X^{\star}$ (resp., $^{\star}X$).
\end{remark}

\begin{definition}[rigid object]
  \cite[2.10.11]{egno/tensor-cats} Let $C$ be a monoidal
  category. A rigid object $X$ of $C$ is a $C$-object that has a
  left dual and a right dual.
\end{definition}

\begin{definition}[rigid category]\cite[2.10.11]{egno/tensor-cats}
  A rigid category $C$ is a monoidal category all of whose
  objects are rigid.
\end{definition}

\begin{definition}[multitensor category] \cite[4.1.1]{egno/tensor-cats}
  A multitensor category $C$ over $\Bbbk$ is a locally finite
  $\Bbbk$-linear abelian rigid monoidal category if the bifunctor
  $\otimes$ in the monoidal structure is $\Bbbk$-bilinear on
  morphisms.
\end{definition}

\begin{lemma}\label{lemma/tensor-is-biexact-in-multitensor-cats}\cite[4.2.1]{egno/tensor-cats}
  Let $C$ be a multitensor category. Then the bifunctor $\otimes$ is
  exact in both factors.
\end{lemma}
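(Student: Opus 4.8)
The plan is to deduce biexactness of $\otimes$ from rigidity alone, using adjoint functors, exactly as in \cite[4.2.1]{egno/tensor-cats}. The point is that in a rigid monoidal category, for each object $X$ both $X \otimes (-)$ and $(-) \otimes X$ admit a left adjoint and a right adjoint (given by tensoring with the left and right duals on the appropriate side), and a functor between abelian categories with a right adjoint is right exact while one with a left adjoint is left exact; having both, $\otimes$ is exact in each variable.

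First I would exhibit, for a fixed $C$-object $X$, a left adjoint and a right adjoint of $X \otimes (-)$. Using the left dual $X^{\star}$ with evaluation $ev : X^{\star} \otimes X \to \mathbb{1}$ and coevaluation $coev : \mathbb{1} \to X \otimes X^{\star}$, one checks the adjunction $(X^{\star} \otimes (-)) \dashv (X \otimes (-))$: the unit $A \to X \otimes X^{\star} \otimes A$ is $coev \otimes \id_{A}$, the counit $X^{\star} \otimes X \otimes B \to B$ is $ev \otimes \id_{B}$, and the two triangle identities are precisely the zig-zag equations defining a left dual, tensored with $\id_{A}$ and with $\id_{B}$ respectively (the associativity and unit isomorphisms being suppressed by coherence). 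Symmetrically, the right dual ${}^{\star}X$, with $ev' : X \otimes {}^{\star}X \to \mathbb{1}$ and $coev' : \mathbb{1} \to {}^{\star}X \otimes X$, yields $(X \otimes (-)) \dashv ({}^{\star}X \otimes (-))$. Thus $X \otimes (-)$ admits both a left adjoint, $X^{\star} \otimes (-)$, and a right adjoint, ${}^{\star}X \otimes (-)$. The analogous constructions on the other side show that $(-) \otimes X$ admits both a left adjoint, $(-) \otimes {}^{\star}X$, and a right adjoint, $(-) \otimes X^{\star}$.

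Next I would invoke the standard homological principle: an additive functor between abelian categories that admits a left adjoint preserves kernels, hence is left exact, while one that admits a right adjoint preserves cokernels, hence is right exact. Here additivity of $X \otimes (-)$ and $(-) \otimes X$ is immediate, since $\otimes$ is $\Bbbk$-bilinear on morphisms by the very definition of a multitensor category, and local finiteness together with abelianness guarantee that the relevant kernels and cokernels exist so that exactness is meaningful. Combining with the previous step, $X \otimes (-)$ and $(-) \otimes X$ are each both left and right exact, hence exact, which is the assertion of the lemma.

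The only genuine work --- and the only spot where care is needed --- is the bookkeeping in the first step: placing the left and right duals on the correct sides so that the defining zig-zag identities of \cite[2.10.1, 2.10.2]{egno/tensor-cats} become, verbatim, the triangle identities of the claimed adjunctions, and checking naturality in the suppressed variable. There is no conceptual obstacle; once the units and counits are written down, the rest is formal adjoint-functor nonsense.
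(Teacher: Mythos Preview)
Your proposal is correct and follows essentially the same approach as the paper: the paper's sketch likewise invokes rigidity to exhibit $X \otimes (-)$ as a two-sided adjoint and then appeals to preservation of (co)limits by adjoint functors. You have simply spelled out the adjunctions more carefully (identifying which dual sits on which side and verifying the triangle identities), but the underlying idea is identical.
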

\begin{Proof}
  It is a fun exercise to prove. A sketch is as follows. Let $X$
  be a $C$-object. The rigidity says that $X \otimes (-)$ is a left and
  right adjoint functor. In general category theory, adjoint
  functors preserve all (co)limits essentially because $Hom$ does
  and Yoneda lemma. In particular, they preserve (co)kernels.
\end{Proof}

\begin{definition}[multifusion category]
  \cite[4.1.1]{egno/tensor-cats} A multifusion category over
  $\Bbbk$ is a multitensor category that is finite over $\Bbbk$
  and semisimple.
\end{definition}

\begin{definition}[fusion category]\cite[4.1.1]{egno/tensor-cats}\label{def/fusion-category}
  A fusion category $C$ is a multifusion category with
  $\End_{C}(\mathbb{1}) \simeq \Bbbk$.
\end{definition}

\begin{definition}[braiding]\cite[8.1.1]{egno/tensor-cats}
  A braiding of a monoidal category
  $(C,\otimes,\mathbb{1},a,\iota,l,r)$ is a natural equivalence
  \begin{equation}
    (-_{1} \otimes -_{2}) \xrightarrow{c} (-_{2} \otimes -_{1})
  \end{equation}
  such that the hexagon diagram \cite[(8.1)]{egno/tensor-cats}
  holds.
\end{definition}

\begin{definition}[braided category]
  \cite[8.1.2]{egno/tensor-cats}\label{def/braided-category} A braided category is a monoidal
  category with a braiding.
\end{definition}

\begin{remark}
  The Yang-Baxter equation holds automatically in a braided
  category \cite[8.1.10]{egno/tensor-cats}.
\end{remark}

\begin{definition}[half-braiding]
  \cite[(7.41)]{egno/tensor-cats}\label{def/half-braiding}
  A half-braiding for an object $X$ in a monoidal category
  $(C,\otimes,\mathbb{1},a,\iota,l,r)$ is a natural equivalence
  \begin{equation}
    (X \otimes -) \xrightarrow{c} (- \otimes X)
  \end{equation}
  such that the hexagon diagram \cite[(7.41)]{egno/tensor-cats}
  holds.
\end{definition}

\begin{definition}[twist]\cite[8.10.1]{egno/tensor-cats}
  Let $C$ be a braided rigid monoidal category. A twist of $C$ is
  an element $\theta \in \Aut(\id_{C})$ such that for each $C$-object
  $X, Y$
  \begin{equation}
    \theta_{{X \otimes Y}} = (\theta_{X} \otimes \theta_{Y}) \circ c_{Y,X} \circ c_{X,Y}
  \end{equation}
\end{definition}

\begin{definition}[ribbon structure]\cite[8.10.1]{egno/tensor-cats}
  Let $C$ be a braided rigid monoidal category. A twist $\theta$
  is called a ribbon structure if
  $(\theta_{X})^{\star} = (\theta_{(X^{\star})})$, where the
  first dual is taken in a rigid category.
\end{definition}

\begin{definition}[ribbon tensor category]\cite[8.10.1]{egno/tensor-cats}
  A ribbon tensor category is a braided rigid monoidal category
  equipped with a ribbon structure.
\end{definition}

\begin{definition}[premodular
  category]\label{def/premodular-category}\cite[8.13.1]{egno/tensor-cats}
  A premodular category is a ribbon fusion category.
\end{definition}

\begin{definition}[pivotal
  structure]\cite[4.7.7]{egno/tensor-cats}
  Let $C$ be a rigid monoidal category. A pivotal structure of
  $C$ is a natural isomorphism
  $$(-) \xrightarrow[\sim]{a} (-)^{\star\star}$$
  such that $a_{X \otimes Y} = a_{X} \otimes a_{Y}$ for all $C$-objects $X$
  and $Y$. We call a rigid monoidal category pivotal if it is
  equipped with a pivotal structure.
\end{definition}

\begin{definition}[pivotal dimension]
  Let $C$ be a rigid monoidal category with a pivotal structure
  $a$. Let $X$ be a $C$-object. We define the pivotal dimension
  with respect to $a$ to be
  $$\dim_{a}(X) := \mbox{Trace}(a_{X}) \in End_{C}(1).$$
\end{definition}

\begin{definition}[spherical
  structure]\cite[4.7.14]{egno/tensor-cats}
  Let $C$ be a rigid monoidal category with a pivotal structure
  $a$. The latter is called a spherical structure if
  $$\dim_{a}(X) = \dim_{a}(X^{\star})$$
  for any $C$-object $X$.
\end{definition}

\begin{remark}\cite[8.13.1]{egno/tensor-cats}
  Equivalently, a premodular category is also a braided fusion
  category equipped with a spherical structure.
\end{remark}

\begin{definition}[modular
  category]\label{def/modular-category}\cite[8.14 and 8.20.12]{egno/tensor-cats}
  A modular category is a premodular category with a
  non-degenerate S-matrix.
\end{definition}

\subsection{Adjunctions as monads} \label{section/adjunctions-as-monads}

\begin{definition}\label{def/adjunction}
  Let $X$ be a strict $2$-category, $C$ and $D$ be $X$-objects, and
  $C \xrightarrow{F} D$ and $C \xleftarrow{G} D$ be morphisms. We
  say that $F$ is right adjoint to $G$, that $G$ is
  left adjoint to $F$, if there exists $2$-morphisms

  $$1_{D} \xRightarrow{\eta} FG, \quad GF \xRightarrow{\epsilon} 1_{C}$$

  \noindent such that the followings

  $$G = G \circ 1_{D} \xRightarrow{1_{G} \ast \eta} G \circ F \circ G \xRightarrow{\epsilon \ast 1_{G}} G$$
  $$F = 1_{D} \circ F \xRightarrow{\eta \ast 1_{F}} F \circ G \circ F \xRightarrow{1_{F} \ast \epsilon} F$$

  \noindent equal the identity $2$-morphisms $1_{G}$ and $1_{F}$
  respectively. Denote $F \vdash G$ in this case.

  \noindent We call $\eta$ and $\epsilon$ the unit and counit of
  the monad, and call the coherence condition the rigidity
  condition.
\end{definition}

\begin{definition}\label{def/monad}
  Let $X$ be a strict $2$-category, $D$ be an $X$-object. Then
  $E=End_{X}(D)$ is a $1$-category. A monad of $D$ is a monoid
  object $T=(T,\eta,\mu)$ in $E$. That is to say, $T$ is an
  $E$-object, and $(1_{D} \xRightarrow{\eta} T)$ and
  $(T^{2} \xRightarrow{\mu} T)$ are $E$-morphisms such that

  $$(1_{T} \ast \eta) = 1_{T} = \mu \circ (\eta \ast 1_{T}), \qquad \mu \circ (\mu \ast 1_{T}) = \mu \circ (1_{T} \ast \mu).$$

\end{definition}

\begin{theorem}\label{thm/adjunctions-give-monads}
  Let $X$ be a strict $2$-category, $C$ and $D$ be $X$-objects,
  $(C \xrightarrow{F} D)$ and $(C \xleftarrow{G} D)$ be morphisms
  such that $F$ is right adjoint to $G$. Then $T=FG$ has a monad
  structure given by
  $$(1_{D} \xRightarrow{\eta} T), \quad (T^{2} \xRightarrow{\mu} T)$$
  where $\mu$ is defined as
  $FGFG \xRightarrow{1_{F} \ast \epsilon \ast 1_{G}} FG.$ Dually,
  $\perp = GF$ has a comonad structure.
\end{theorem}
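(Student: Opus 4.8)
The plan is to verify directly that the triple $(T,\eta,\mu)$ with $T=FG$, $\eta$ the unit of the adjunction, and $\mu = 1_F \ast \epsilon \ast 1_G$ satisfies the two axioms of a monoid object in $\End_X(D)$ demanded by Definition~\ref{def/monad}, using nothing beyond the interchange (middle-four) law of the strict $2$-category $X$ and the two triangle identities of Definition~\ref{def/adjunction}. Throughout I write $\circ$ for vertical composition of $2$-morphisms and $\ast$ for horizontal composition, and I use repeatedly that $\ast$ is a bifunctor, so that whiskering by a fixed identity $2$-morphism on either side commutes with $\circ$; in particular $1_{FG} = 1_F \ast 1_G$.

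First, the unit axioms. Whiskering $\eta\colon 1_D \Rightarrow FG$ on the left by $FG$ and post-composing with $\mu$ gives $\mu\circ(1_{FG}\ast\eta) = (1_F\ast(\epsilon\ast 1_G))\circ(1_F\ast(1_G\ast\eta)) = 1_F\ast\big((\epsilon\ast 1_G)\circ(1_G\ast\eta)\big)$, and the inner vertical composite is exactly $1_G$ by the first triangle identity of Definition~\ref{def/adjunction}; hence $\mu\circ(1_T\ast\eta) = 1_F\ast 1_G = 1_T$. Symmetrically, $\mu\circ(\eta\ast 1_{FG}) = \big((1_F\ast\epsilon)\circ(\eta\ast 1_F)\big)\ast 1_G = 1_F\ast 1_G = 1_T$ by the second triangle identity. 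Thus $\mu\circ(\eta\ast 1_T) = 1_T = \mu\circ(1_T\ast\eta)$.

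Second, associativity. Both $\mu\circ(\mu\ast 1_T)$ and $\mu\circ(1_T\ast\mu)$ are $2$-morphisms $T^3 = FGFGFG \Rightarrow FG$, and I reduce the claim to an identity about $\epsilon$ alone. By the interchange law the horizontal composite $\epsilon\ast\epsilon$ of $\epsilon\colon GF\Rightarrow 1_C$ with itself decomposes in two ways as a vertical composite with identities, yielding $\epsilon\circ(\epsilon\ast 1_{GF}) = \epsilon\ast\epsilon = \epsilon\circ(1_{GF}\ast\epsilon)$ in $\End_X(C)$. Now $\mu\ast 1_T = 1_F\ast(\epsilon\ast 1_{GF})\ast 1_G$, $\;1_T\ast\mu = 1_F\ast(1_{GF}\ast\epsilon)\ast 1_G$, and $\mu = 1_F\ast\epsilon\ast 1_G$, so whiskering the displayed identity by $1_F$ on the left and $1_G$ on the right gives precisely $\mu\circ(\mu\ast 1_T) = 1_F\ast\big(\epsilon\circ(\epsilon\ast 1_{GF})\big)\ast 1_G = 1_F\ast\big(\epsilon\circ(1_{GF}\ast\epsilon)\big)\ast 1_G = \mu\circ(1_T\ast\mu)$. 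This establishes that $T=FG$ is a monad.

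For the dual assertion, the clean route is to pass to the $2$-category $X^{\mathrm{co}}$ obtained from $X$ by reversing all $2$-morphisms (keeping objects and $1$-morphisms): the pair $(\eta,\epsilon)$ witnessing $F\vdash G$ in $X$ witnesses $G\vdash F$ in $X^{\mathrm{co}}$, a monad in $X^{\mathrm{co}}$ is by definition a comonad in $X$, and applying the monad construction above inside $X^{\mathrm{co}}$ to $GF$ produces the comonad $\perp = GF$ on $C$ with counit $\epsilon\colon GF\Rightarrow 1_C$ and comultiplication $1_G\ast\eta\ast 1_F\colon GF\Rightarrow GFGF$; equivalently, one repeats the two computations above with every $2$-morphism arrow reversed. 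There is genuinely no hard step here — this is the classical Eilenberg--Moore observation — and the only thing demanding care, hence the ``main obstacle'' such as it is, is bookkeeping: respecting the paper's convention that $FG$ means ``$G$ then $F$'', whiskering on the correct side, and invoking only the bifunctoriality of $\ast$ rather than any naturality statement unavailable in a bare strict $2$-category.
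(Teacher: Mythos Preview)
Your proof is correct and follows essentially the same route as the paper: both derive the unit laws by whiskering the triangle identities of Definition~\ref{def/adjunction} by $1_F$ or $1_G$, and both obtain associativity by reducing to the interchange identity $\epsilon\circ(\epsilon\ast 1_{GF}) = \epsilon\ast\epsilon = \epsilon\circ(1_{GF}\ast\epsilon)$ and then whiskering on both sides. Your explicit $X^{\mathrm{co}}$ argument for the comonad statement is a clean way to package what the paper leaves as an implicit ``dually''.
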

\begin{Proof}
  \begin{align*}
      &\quad 1_{T} \\
    = &\quad 1_{F} \ast 1_{G} \\
    = &\quad 1_{F} \ast (G \xRightarrow{1_{G} \ast \eta} GFG \xRightarrow{\epsilon \ast 1_{G}} G) \\
    = &\quad (FG \circ 1_{D} \xRightarrow{1_{F} \ast 1_{G} \ast \eta} FGFG \xRightarrow{1_{F} \ast \epsilon \ast 1_{G}} FG) \\
    = &\quad (T \circ 1_{D} \xRightarrow{1_{T} \ast \eta} T \circ T \xRightarrow{\mu} T)
  \end{align*}

  \begin{align*}
      &\quad 1_{T} \\
    = &\quad 1_{F} \ast 1_{G} \\
    = &\quad (F \xRightarrow{\eta \ast 1_{F}} FGF \xRightarrow{1_{F} \ast \epsilon} F) \ast 1_{G} \\
    = &\quad (1_{D} \circ FG \xRightarrow{\eta \ast 1_{F} \ast 1_{G}} FGFG \xRightarrow{1_{F} \ast \epsilon \ast 1_{G}} FG) \\
    = &\quad (1_{D} \circ T \xRightarrow{\eta \ast 1_{T}} T \circ T \xRightarrow{\mu} T)
  \end{align*}

  \begin{align*}
      &\quad (T^{3} \xRightarrow{1_{T} \circ \mu} T^{2} \xRightarrow{\mu} T) \\
    = &\quad ((FG)(FGFG) \xRightarrow{1_{FG} \ast 1_{F} \ast \epsilon \ast 1_{G}} FGFG \xRightarrow{1_{F} \ast \epsilon \ast 1_{G}} FG) \\
    = &\quad 1_{F} \ast (GFGF \xRightarrow{1_{GF} \ast \epsilon} GF \xRightarrow{\epsilon} 1_{D}) \ast 1_{G} \\
    = &\quad 1_{F} \ast (GFGF \xRightarrow{\epsilon \ast \epsilon} 1_{D}) \ast 1_{G} \\
    = &\ldots = (T^{3} \xRightarrow{\mu \circ 1_{T}} T^{2} \xRightarrow{\mu} T).
  \end{align*}
\end{Proof}

\begin{theorem}
  Let $C$ and $D$ be categories, $C \xrightarrow{F} D$ and
  $C \xleftarrow{G} D$ be functors such that $F$ is a right
  adjoint functor to $G$, i.e. there exists natural equivalence
  $\Phi$ such that
  $$C(Gd,c) \xrightarrow[\Phi_{d,c}]{\sim} D(d,Fc).$$
  Then $F$ is right adjoint to $G$ the strict $2$-category $Cat$
  of all categories.
\end{theorem}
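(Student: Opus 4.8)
The strategy is to extract an explicit unit and counit from the natural isomorphism $\Phi$ and then verify the two rigidity conditions of Definition~\ref{def/adjunction}. Since the ambient strict $2$-category is $Cat$, the abstract $2$-cell composites $1_G \ast \eta$, $\epsilon \ast 1_G$, $\eta \ast 1_F$, $1_F \ast \epsilon$ appearing there are nothing but the whiskered natural transformations $G\eta$, $\epsilon G$, $\eta F$, $F\epsilon$, so each rigidity condition reduces to a componentwise equation of morphisms in $C$ or $D$.

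First I would set $\epsilon_{c} := \Phi_{Fc,c}^{-1}(\id_{Fc}) \in C(GFc, c)$ and $\eta_{d} := \Phi_{d,Gd}(\id_{Gd}) \in D(d, FGd)$. The key preliminary step is to record the two ``triangular'' formulas $\Phi_{d,c}(f) = Ff \circ \eta_{d}$ for $f\colon Gd \to c$ and $\Phi_{d,c}^{-1}(g) = \epsilon_{c} \circ Gg$ for $g\colon d \to Fc$; each follows from the naturality of $\Phi$ in the $C$-variable (resp.\ of $\Phi^{-1}$ in the $D$-variable), applied to $f$ (resp.\ $g$) and the relevant identity morphism. Together with the naturality of $\Phi$ in both variables, these formulas also show that $\eta$ and $\epsilon$ are genuine natural transformations $1_D \Rightarrow FG$ and $GF \Rightarrow 1_C$.

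Next I would verify the rigidity conditions. Applying the first triangular formula with $d = Fc$ and $f = \epsilon_{c}$, and using $\Phi_{Fc,c}(\epsilon_{c}) = \Phi_{Fc,c}\big(\Phi_{Fc,c}^{-1}(\id_{Fc})\big) = \id_{Fc}$, yields $F(\epsilon_{c}) \circ \eta_{Fc} = \id_{Fc}$, which is precisely the composite $F = 1_D \circ F \xRightarrow{\eta \ast 1_F} FGF \xRightarrow{1_F \ast \epsilon} F$ of Definition~\ref{def/adjunction} evaluated at $c$; hence this composite is $1_F$. Symmetrically, applying the second triangular formula with $c = Gd$ and $g = \eta_{d}$, and using $\Phi_{d,Gd}^{-1}(\eta_{d}) = \Phi_{d,Gd}^{-1}\big(\Phi_{d,Gd}(\id_{Gd})\big) = \id_{Gd}$, yields $\epsilon_{Gd} \circ G(\eta_{d}) = \id_{Gd}$, which is the composite $G = G \circ 1_D \xRightarrow{1_G \ast \eta} GFG \xRightarrow{\epsilon \ast 1_G} G$ evaluated at $d$; hence this composite is $1_G$. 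Thus $(\eta,\epsilon)$ exhibits $F$ as right adjoint to $G$ in $Cat$ in the sense of Definition~\ref{def/adjunction}.

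The only delicate point, and the main obstacle, is the bookkeeping that matches the abstract pasting composites of Definition~\ref{def/adjunction} with the concrete whiskered natural transformations in $Cat$ and checks that the componentwise identities above truly assemble into equalities of $2$-cells rather than mere objectwise coincidences; once this dictionary is fixed, every remaining assertion is an immediate consequence of the naturality of $\Phi$. I note in passing that the converse direction -- a $2$-categorical adjunction in $Cat$ giving back such a $\Phi$, via $f \mapsto Ff \circ \eta_d$ with inverse $g \mapsto \epsilon_c \circ Gg$, the mutual inverseness following from the triangle identities -- is equally routine, though the statement as phrased only asks for the direction above.
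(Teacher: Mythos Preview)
Your proposal is correct and follows essentially the same approach as the paper: define $\eta_d=\Phi_{d,Gd}(1_{Gd})$ and $\epsilon_c=\Phi^{-1}_{Fc,c}(1_{Fc})$, then use naturality of $\Phi$ to obtain both the naturality of $\eta,\epsilon$ and the triangle identities. The only organizational difference is that you first isolate the ``triangular formulas'' $\Phi_{d,c}(f)=Ff\circ\eta_d$ and $\Phi^{-1}_{d,c}(g)=\epsilon_c\circ Gg$ as a tool (which the paper records separately as Lemma~\ref{lemma/adjunction-in-monadic-term}) and then read off both triangle identities in one line, whereas the paper instead chases the relevant naturality squares directly inside the proof; the underlying computations are identical.
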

\begin{Proof}
  From the given natural equivalence $\Phi$, we have to construct
  $1_{D} \xRightarrow{\eta} FG$ and
  $GF \xRightarrow{\epsilon} 1_{C}$ that satisfy the conditions
  as in \ref{def/adjunction}. We contend that
  $\eta_{d} = \Phi_{d,Gd}(1_{Gd})$ and
  $\epsilon_{c} = \Phi^{(\m 1)}_{Fc,c}(1_{Fc})$ are as desired.

  Let us first show that $\eta_{d}$ is indeed a natural
  transformation from $1_{D}$ to $FG$. So is $\epsilon_{c}$
  similarly, whose proof will be omitted. Let
  $d \xrightarrow{\phi} d'$ be a $D$-morphism. We shall prove that
  the following commutative diagram commute.

  \begin{center}
    \begin{tikzcd}
      d \arrow[r, "\eta_{d}"]\arrow[d, "\phi"]
      & FGd \arrow[d, "FG\phi"] \\
      d' \arrow[r, "\eta_{d'}"] & D
    \end{tikzcd}
  \end{center}

  \noindent First notice that

  \begin{align*}
    \Phi^{(\m 1)}_{d,Gd'}(\eta_{d'} \circ \phi) = &\quad (G\phi)^{\star} \circ \Phi^{(\m 1)}_{d',Gd'} \circ (\phi^{\star})^{(\m 1)}(\eta_{d'} \circ \phi) \\
    = &\quad (G\phi)^{\star} \circ \Phi^{(\m 1)}_{d',Gd'}(\eta_{d'}) \\
    = &\quad (G\phi)^{\star}(1_{Gd'}) \\
    = &\quad G\phi
  \end{align*}

  \noindent As $\Phi$ is an equivalence, it suffices to prove that
  $\Phi^{(\m 1)}_{d,Gd'}((FG\phi) \circ \eta_{d})$ is also $G\phi$.

  \begin{align*}
    \Phi^{(\m 1)}_{d,Gd'}((FG\phi) \circ \eta_{d}) = &\quad \Phi^{(\m 1)}_{d,Gd'}((FG\phi) \circ \Phi_{d,Gd}(1_{Gd})) \\
    = &\quad (G\phi)_{\star} \circ \Phi^{(\m 1)}_{d,Gd} \circ (FG\phi)^{(\m 1)}_{\star} ((FG\phi) \circ \Phi_{d,Gd}(1_{Gd})) \\
    = &\quad (G\phi)_{\star} \circ \Phi^{(\m 1)}_{d,Gd} (\Phi_{d,Gd}(1_{Gd})) \\
    = &\quad (G\phi)_{\star}(1_{Gd}) \\
    = &\quad G\phi
  \end{align*}

  \noindent which is due to the naturality of $\Phi$

  \begin{center}
    \begin{tikzcd}
      C(Gd,Gd) \arrow[r, "\Phi_{d,Gd}"]\arrow[d,
      "(G\phi)_{\star}"]
      & D(d,FGd) \arrow[d, "(FG\phi)_{\star}"] \\
      C(Gd,Gd')' \arrow[r, "\Phi_{d,Gd'}"] & D(d,FGd')
    \end{tikzcd}
  \end{center}

  Next, we have to show the rigidity conditions of $\eta$ and $\epsilon$.
  Indeed, from the naturality of $\Phi$ we have the commutative
  diagram

  \begin{center}
    \begin{tikzcd}
      C(GFGd,Gd) \arrow[r, "\Phi_{FGd,Gd}"]\arrow[d,
      "(G(\eta_{d})^{\star})"]
      & D(FGd,FGd) \arrow[d, "\eta_d^{\star}"] \\
      C(Gd,Gd) \arrow[r, "\Phi_{d,Gd}"] & D(d,FGd)
    \end{tikzcd}
  \end{center}

  \noindent and thus the following holds

  \begin{align*}
    [(\epsilon \ast 1_{G}) \circ (1_{G} \ast \eta)]_{d} = &\quad (\epsilon \ast 1_{G})_{d} \circ (1_{G} \ast \eta)_{d} \\
    = &\quad (GFGd \xrightarrow{\epsilon_{Gd}} Gd) \circ G(d \xrightarrow{\eta_{d}} FGd) \\
    = &\quad Gd \xrightarrow{G(\eta_{d})} GFGd \xrightarrow{\epsilon_{Gd}} Gd \\
    = &\quad Gd \xrightarrow{G(\eta_{d})} GFGd \xrightarrow{\Phi^{(\m 1)}_{FGd,Gd}(1_{(FGd)})} Gd \\
    = &\quad (G(\eta_{d}))^{\star}\Big(GFGd \xrightarrow{\Phi^{(\m 1)}_{FGd,Gd}(1_{(FGd)})} Gd \Big) \\
    = &\quad \big(\Phi^{(\m 1)}_{d,Gd} \circ \eta^{\star}_{d}\big)(1_{FGd}) \\
    = &\quad \Phi^{(\m 1)}_{d,Gd}(\eta_{d}) = 1_{Gd}.
  \end{align*}

  \noindent The other rigidity condition follows similarly from the
  commutative diagram

  \begin{center}
    \begin{tikzcd}
      C(GFc,GFc) \arrow[r, "\Phi_{Fc,GFc}"]\arrow[d,
      "(\epsilon_{c})_{\star}"]
      & D(Fc,FGFc) \arrow[d, "(F\epsilon_{c})_{\star}"] \\
      C(GFc,c) \arrow[r, "\Phi_{Fc,c}"] & D(Fc,Fc)
    \end{tikzcd}
  \end{center}

\end{Proof}

Therefore, adjoint functors give adjoint pairs in the
$2$-category $Cat$, which in turn gives a monad and a comonad in
$Cat$. Let's summarize the result in the following theorem.

\begin{theorem}
  Let $C \xrightarrow{F} D$ and $C \xleftarrow{G} D$ be functors
  such that $F$ is right adjoint to $G$. Then there is a
  $D$-monad $(T=FG, \eta, \mu)$ and a $C$-comonad $(\perp=GF, \epsilon, \Delta)$, where
  $$\eta_{d} = \Phi_{d,Gd}(1_{(Gd)}),
  \mu = 1_{F} \ast \epsilon \ast 1_{G},$$
  $$\epsilon_{c} = \Phi^{(\m 1)}_{Fc,c}(1_{Fc}),
  \Delta = 1_{G} \ast \eta \ast 1_{F}.$$
\end{theorem}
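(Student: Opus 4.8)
The plan is to assemble this statement from the two immediately preceding results, which together form a short pipeline: a functor adjunction $\rightsquigarrow$ a $2$-categorical adjunction in $Cat$ $\rightsquigarrow$ a monad and a comonad. No new construction or estimate is needed; everything is a matter of invoking what has been proved and matching notation.

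First I would invoke the preceding theorem, which shows that the hypothesis ``$F$ is right adjoint to $G$ as functors'', witnessed by the natural equivalence $\Phi$ with $C(Gd,c) \xrightarrow{\sim} D(d,Fc)$, promotes to ``$F$ is right adjoint to $G$ in the strict $2$-category $Cat$'' in the sense of \ref{def/adjunction}, with explicit unit $1_{D} \xRightarrow{\eta} FG$ and counit $GF \xRightarrow{\epsilon} 1_{C}$ given by $\eta_{d} = \Phi_{d,Gd}(1_{Gd})$ and $\epsilon_{c} = \Phi^{(\m 1)}_{Fc,c}(1_{Fc})$. That theorem already verifies the naturality of $\eta$ and $\epsilon$ and the two rigidity (triangle) identities, so nothing remains to be checked at this stage. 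Next I would feed this $2$-categorical adjunction into \ref{thm/adjunctions-give-monads}, specialized to $X = Cat$: it produces from any such adjunction a monad structure on $T = FG$ with unit $\eta$ and multiplication $\mu = 1_{F} \ast \epsilon \ast 1_{G}$, and—by the dual half of the same theorem—a comonad structure on $\perp = GF$ with counit $\epsilon$ and comultiplication $\Delta = 1_{G} \ast \eta \ast 1_{F}$. Matching symbols, this is precisely the asserted data $(T,\eta,\mu)$ and $(\perp,\epsilon,\Delta)$, which completes the argument.

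The only genuine care point—the ``main obstacle'', such as it is—is bookkeeping of variance and handedness conventions: one must confirm that the $\eta$ serving as the monad unit in \ref{thm/adjunctions-give-monads} is literally the $\eta$ produced by the preceding theorem (it is, since both are the unit of the one $2$-categorical adjunction), and that the comonad on $GF$ is genuinely the formal dual of the monad statement rather than something requiring a separate computation. For the latter, the unit and associativity laws for $\Delta$ follow by reversing every $2$-cell in the three displayed chains of equalities in the proof of \ref{thm/adjunctions-give-monads}, exchanging the roles of $\eta$ and $\epsilon$; since $\ast$ and $\circ$ of $2$-cells behave symmetrically under this reversal and $Cat$ is a strict $2$-category, the comonad axioms of \ref{def/monad} (dualized) hold verbatim.
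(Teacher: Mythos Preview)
Your proposal is correct and matches the paper's approach exactly: the paper presents this theorem explicitly as a summary of the two immediately preceding results (the promotion of the functor adjunction to a $2$-categorical adjunction in $Cat$, and \ref{thm/adjunctions-give-monads}), offering no separate proof. Your remarks on the bookkeeping of handedness and on obtaining the comonad by dualizing the displayed chains in the proof of \ref{thm/adjunctions-give-monads} are accurate and in fact slightly more explicit than the paper, which simply asserts the dual statement.
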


For the rest of this subsection, assume that
$C \xrightarrow{F} D$ is a right adjoint functor of
$C \xleftarrow{G} D$ with the natural transformation $\Phi$. The
categories $C$ and $D$ are intimately tided together by the
adjoint functors between them. For example, a part of
compositions in $C$ can be identified as monadic composition in
$D$.

\begin{theorem}
  The usual composition map
  $$C(Gx,Gy) \times C(Gy,Gz) \xrightarrow{\circ} C(Gx,Gz)$$
  is identified under $\Phi$ as the Kleisi composition
  $$D(x,Ty) \times D(y,Tz) \xrightarrow{\circ_{T}} D(x,Tz)$$
  $$ \circ_{T}(f, g) \mapsto \mu_{Z} \circ (Tg) \circ f. $$
\end{theorem}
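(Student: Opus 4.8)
The plan is to reduce the statement to the explicit unit--counit description of the adjunction bijection and then to verify it by one naturality square together with a triangle identity of \ref{def/adjunction}. First I would record the closed forms I need. Since $\eta_{d} = \Phi_{d,Gd}(1_{Gd})$ and $\Phi$ is natural in its second variable, whiskering the naturality square of $\Phi$ along an arbitrary morphism $h \colon Gd \to c$ and evaluating at $1_{Gd}$ gives
$$\Phi_{d,c}(h) = Fh \circ \eta_{d},$$
and dually $\Phi^{(\m 1)}_{d,c}(k) = \epsilon_{c} \circ Gk$. From \ref{thm/adjunctions-give-monads} we have $T = FG$ and $\mu = 1_{F} \ast \epsilon \ast 1_{G}$, hence $\mu_{z} = F(\epsilon_{Gz}) \colon FGFGz \to FGz$.

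Next I would simply compute both sides for morphisms $\phi \in C(Gx,Gy)$ and $\psi \in C(Gy,Gz)$. The left-hand side is $\Phi_{x,Gz}(\psi \circ \phi) = F\psi \circ F\phi \circ \eta_{x}$. For the right-hand side, put $f := \Phi_{x,Gy}(\phi) = F\phi \circ \eta_{x} \in D(x,Ty)$ and $g := \Phi_{y,Gz}(\psi) = F\psi \circ \eta_{y} \in D(y,Tz)$; then $Tg = FG(F\psi \circ \eta_{y}) = F\big(GF\psi \circ G\eta_{y}\big)$, so the Kleisli composite is
$$\mu_{z} \circ (Tg) \circ f = F\big(\epsilon_{Gz} \circ GF\psi \circ G\eta_{y}\big) \circ F\phi \circ \eta_{x}.$$
It remains to collapse the bracketed expression: naturality of $\epsilon \colon GF \Rightarrow 1_{C}$ applied to $\psi$ gives $\epsilon_{Gz} \circ GF\psi = \psi \circ \epsilon_{Gy}$, and the triangle identity $(\epsilon \ast 1_{G}) \circ (1_{G} \ast \eta) = 1_{G}$ of \ref{def/adjunction} evaluated at $y$ gives $\epsilon_{Gy} \circ G\eta_{y} = 1_{Gy}$, whence $\epsilon_{Gz} \circ GF\psi \circ G\eta_{y} = \psi$. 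Therefore the right-hand side equals $F\psi \circ F\phi \circ \eta_{x}$, which coincides with the left-hand side; since each $\Phi_{d,c}$ is a bijection, this is exactly the asserted identification of ordinary composition in $C$ with Kleisli composition in $D$.

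There is no analytic or combinatorial difficulty here; the one thing to be careful about is the bookkeeping — keeping track of which object each of $\Phi$, $\eta$, $\epsilon$, $\mu$ is whiskered or evaluated at, and invoking the triangle identity for $G$ at the object $y$ rather than the one for $F$. One should also fix the reading $\mu = 1_{F} \ast \epsilon \ast 1_{G}$ consistently with \ref{thm/adjunctions-give-monads} so that $\mu_{z} = F(\epsilon_{Gz})$; with those conventions pinned down the computation above is forced, and the proof is complete.
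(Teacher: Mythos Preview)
Your proof is correct. The paper proves the same identity but from the other side of the bijection: it starts with $f\in D(x,Ty)$ and $g\in D(y,Tz)$ and shows $\Phi^{-1}_{x,Gz}(\mu_{z}\circ Tg\circ f)=\Phi^{-1}_{y,Gz}(g)\circ\Phi^{-1}_{x,Gy}(f)$ by chasing two naturality squares of $\Phi$ itself, never unpacking $\Phi$ into its unit--counit description beyond writing $\mu_{z}=F(\Phi^{-1}_{FGz,Gz}(1_{FGz}))$. You instead begin on the $C$-side with $\phi,\psi$, immediately trade $\Phi$ for the closed form $\Phi(h)=Fh\circ\eta$, and then collapse via naturality of $\epsilon$ and the triangle identity $\epsilon_{Gy}\circ G\eta_{y}=1_{Gy}$. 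Your route is arguably more direct, since it invokes the triangle identity explicitly rather than hiding it inside the naturality of $\Phi$; the paper's route has the minor advantage of not relying on the explicit formula for $\Phi$ (which in the paper is proved only afterwards as \ref{lemma/adjunction-in-monadic-term}), but you compensate by deriving that formula yourself at the outset. The two arguments are of comparable length and difficulty.
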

\begin{Proof}
  Since $\Phi$ is an equivalence, and since
  $$\Phi(\circ (\Phi^{(\m 1)}(f,g))) = \Phi_{x,Gz}(\Phi^{(\m 1)}_{y,Gz}(g) \circ \Phi^{(\m 1)}_{x,Gy}(f)),$$
  it suffices to prove that
  $$\Phi^{(\m 1)}_{x,Gz}(\mu_{Z} \circ (Tg) \circ f) = \Phi_{y,Gz}^{(\m 1)}(g) \circ \Phi_{x,Gy}^{(\m 1)}(f).$$
  The main task would be to express $\mu_{Z}$ in terms of $\Phi$.

  From the commutative diagram

  \begin{center}
    \begin{tikzcd}
      C(GFGz,Gz) \arrow[r, "\Phi_{FGz,Gz}"] \arrow[d, "(-) \circ
      G(g)"]
      & D(FGz,FGz) \arrow[d, "(-) \circ g"] \\
      C(Gy,Gz) \arrow[r, "\Phi_{y,Gz}"] & D(y,FGz)
    \end{tikzcd}
  \end{center}

  \noindent we have

  \begin{align*}
    &\quad \mu_{Z} \circ (Tg) \circ f \\
    = &\quad F(\Phi^{(\m 1)}_{FGz,Gz}(1_{FGz}) \circ F(G(g)) \circ f) \\
    = &\quad F(\Phi^{(\m 1)}_{FGz,Gz}(1_{FGz}) \circ G(g)) \circ f) \\
    = &\quad F(\Phi^{(\m 1)}_{y,Gz}(1_{FGz} \circ g)) \circ f) \\
    = &\quad F(\Phi^{(\m 1)}_{y,Gz}(g)) \circ f)
  \end{align*}

  \noindent It remains to prove that
  $${\Phi^{(\m 1)}_{x,Gz}}(F(\Phi^{(\m 1)}_{y,Gz}) \circ f) = \Phi^{(\m 1)}_{y,Gz}(g) \circ \Phi^{(\m 1)}_{x,Gy}(f),$$
  which directly follows from the commutative diagram obtained
  from the naturality of $\Phi$:

  \begin{center}
    \begin{tikzcd}
      C(Gx,Gy) \arrow[r, "\Phi_{x,Gy}"]\arrow[d, "\Phi^{(\m 1)}(g) \circ
      (-)"]
      & D(x,FGy) \arrow[d, "F(\Phi^{(\m 1)}) \circ (-)"] \\
      C(Gx,Gz) \arrow[r, "\Phi_{x,Gz}"] & D(x,FGz)
    \end{tikzcd}
  \end{center}

\end{Proof}

The natural equivalence $\Phi$ isn't as easy to manipulate as the
(co)monads it induces. We collect more statements that express
the former in terms of the later.

\begin{lemma}\label{lemma/adjunction-in-monadic-term}
  In terms of (co)monad, $\Phi$ can be expressed as follows.
  $$\Phi_{d,c}(\phi) = F(\phi) \circ \eta_{d},$$
  $$\Phi^{(\m 1)}_{d,c}(\psi) = \epsilon_{c} \circ G(\psi).$$
\end{lemma}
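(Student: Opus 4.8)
The plan is to read both formulas off from the explicit description of the unit and counit recalled just above, namely $\eta_{d} = \Phi_{d,Gd}(1_{Gd})$ and $\epsilon_{c} = \Phi^{(\m 1)}_{Fc,c}(1_{Fc})$, together with naturality of $\Phi$. This is the standard passage between the hom-set form and the unit--counit form of an adjunction.

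First I would prove the formula for $\Phi$. Fix a $C$-morphism $\phi : Gd \to c$ and instantiate naturality of $\Phi$ in its second argument, taking the transition morphism to be $\phi$ itself; this gives the commuting square
\begin{center}
\begin{tikzcd}
C(Gd,Gd) \arrow[r,"\Phi_{d,Gd}"] \arrow[d,"\phi \circ (-)"'] & D(d,FGd) \arrow[d,"F\phi \circ (-)"] \\
C(Gd,c) \arrow[r,"\Phi_{d,c}"] & D(d,Fc).
\end{tikzcd}
\end{center}
Tracing $1_{Gd}$ through it, the left-then-bottom route produces $\Phi_{d,c}(\phi \circ 1_{Gd}) = \Phi_{d,c}(\phi)$, while the top-then-right route produces $F(\phi) \circ \Phi_{d,Gd}(1_{Gd}) = F(\phi) \circ \eta_{d}$; commutativity yields $\Phi_{d,c}(\phi) = F(\phi) \circ \eta_{d}$.

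For the inverse I would argue dually. Fix $\psi : d \to Fc$ and instantiate naturality of $\Phi^{(\m 1)}$ in its first argument, taking the transition morphism to be $\psi$; this gives the commuting square
\begin{center}
\begin{tikzcd}
D(Fc,Fc) \arrow[r,"\Phi^{(\m 1)}_{Fc,c}"] \arrow[d,"(-) \circ \psi"'] & C(GFc,c) \arrow[d,"(-) \circ G\psi"] \\
D(d,Fc) \arrow[r,"\Phi^{(\m 1)}_{d,c}"] & C(Gd,c).
\end{tikzcd}
\end{center}
Chasing $1_{Fc}$: the down-then-right route gives $\Phi^{(\m 1)}_{d,c}(1_{Fc} \circ \psi) = \Phi^{(\m 1)}_{d,c}(\psi)$, and the right-then-down route gives $\Phi^{(\m 1)}_{Fc,c}(1_{Fc}) \circ G(\psi) = \epsilon_{c} \circ G(\psi)$, so $\Phi^{(\m 1)}_{d,c}(\psi) = \epsilon_{c} \circ G(\psi)$. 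As an optional consistency check one can verify that these two assignments are mutually inverse by invoking the triangle (rigidity) identities for $(\eta,\epsilon)$ established earlier, though this is not logically needed once both squares are in hand.

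There is no genuinely hard step; the only care required is bookkeeping --- using naturality of $\Phi$ in the covariant ($c$) slot for the first identity and of $\Phi^{(\m 1)}$ in the contravariant ($d$, which enters through $G$) slot for the second, and remembering to feed the identities $1_{Gd}$ and $1_{Fc}$ into the respective squares. The whole argument is a few lines of the usual ``universal element'' manipulation.
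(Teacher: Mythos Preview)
Your proof is correct and follows essentially the same approach as the paper: both argue by chasing the identity through a naturality square for $\Phi$ (resp.\ $\Phi^{-1}$) in the appropriate variable, using the definitions $\eta_{d} = \Phi_{d,Gd}(1_{Gd})$ and $\epsilon_{c} = \Phi^{-1}_{Fc,c}(1_{Fc})$. If anything, your version is slightly more explicit and cleanly typed than the diagrams recorded in the paper.
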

\begin{Proof}
  These are evident from the commutative diagrams below
  respectively.

  \begin{center}
    \begin{tikzcd}
      C(Gd,Gd) \arrow[r, "\Phi_{d,Gd}"]\arrow[d, "\Phi \circ
      (-)"]
      & D(d,FGd) \arrow[d, "(F\phi) \circ (-)"] \\
      C(Gd,c) \arrow[r, "\Phi_{d,c}"] & D(d,Fc)
    \end{tikzcd}
    \begin{tikzcd}
      C(GFc,Fc) \arrow[r, "\Phi_{Fc,c}"]\arrow[d, "(-) \circ
      \psi"]
      & D(Fc,Fc) \arrow[d, "(-) \circ G(\psi)"] \\
      C(Gd,c) \arrow[r, "\Phi_{d,Fc}"] & D(d,Fc)
    \end{tikzcd}
  \end{center}

\end{Proof}

Instances arise where two functors are adjoint to each other from
both sides. We call them (strict) ambidextrous functors.

\begin{definition}[ambidextrous adjunctions]
  Let $C \xrightarrow{F} D$ and ${C \xleftarrow{G} D}$ be
  functors. We call $F$ and $G$ a pair of (strict) ambidextrous
  functors if $F$ is both left-adjoint and right-adjoint to $G$.
\end{definition}

Two monads and two comonads arise from a pair of ambidextrous
functors. More precisely, that $F \vdash G$ gives a natural $D$-monad
$(T=FG, \eta, \mu)$ and a natural $C$-comonad $(\perp=GF, \epsilon, \Delta)$.
Similarly, that $F \dashv G$ gives a natural $D$-comonad
$(T=FG, \eta', \mu')$ and a natural $C$-monad $(\perp=GF, \epsilon',\Delta')$. In
particular, we have a bimonad structure on $T=(T,\eta,\mu,\epsilon,\Delta)$, with
unit $\eta$, multiplication $\mu$, counit $\epsilon$, and co-multiplication
$\Delta$.

\begin{definition}\label{def/unity-trace}
  We the bimonad $T$ is of unity trace if
  $$(1_{D} \xrightarrow{\eta} T \xrightarrow{\eta'} 1_{D}) = (1_{D} \xrightarrow{1_{(1_{D})}} 1_{D}).$$
  We the bimonad $T$ is of collapsable diamond if
  $$(T \xrightarrow{\mu'} T^{2} \xrightarrow{\mu} T) = (T \xrightarrow{1_{T}} T).$$
\end{definition}

\noindent By the following lemma, the second condition is
superseded by the first one.

\begin{lemma}
  If such adjunction is of unity trace for $\perp$, then $T$ is of
  collapsable diamond.
\end{lemma}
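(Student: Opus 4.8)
The plan is to unwind both sides of the collapsable-diamond equation into horizontal and vertical composites of $2$-morphisms, and to recognize the unity-trace $2$-morphism of $\perp$ as the middle factor. First I would record the explicit formulas for the two structure $2$-morphisms of the bimonad $T = FG$ that appear in the statement. From $F \vdash G$ one has, as in \ref{thm/adjunctions-give-monads}, the monad multiplication $\mu = 1_F \ast \epsilon \ast 1_G \colon FGFG \Rightarrow FG$, where $\epsilon \colon GF \Rightarrow 1_C$ is the counit; dually, from $F \dashv G$ one has the comonad comultiplication $\mu' = 1_F \ast \epsilon' \ast 1_G \colon FG \Rightarrow FGFG$, where $\epsilon' \colon 1_C \Rightarrow GF$ is the unit of that adjunction. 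These are exactly the $2$-morphisms denoted $\mu$ and $\mu'$ in the paragraph preceding \ref{def/unity-trace}, now written as whiskerings of $2$-cells of $\End(C)$ by $F$ on the left and $G$ on the right.

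Next I would compute $\mu \circ \mu'$ using the fact that in a strict $2$-category whiskering by a fixed $1$-morphism is functorial with respect to vertical composition: for composable $2$-morphisms $\alpha,\beta$ between endomorphisms of $C$ one has $(1_F \ast \alpha \ast 1_G) \circ (1_F \ast \beta \ast 1_G) = 1_F \ast (\alpha \circ \beta) \ast 1_G$. Applying this with $\alpha = \epsilon$ and $\beta = \epsilon'$ gives
$$\mu \circ \mu' \;=\; (1_F \ast \epsilon \ast 1_G) \circ (1_F \ast \epsilon' \ast 1_G) \;=\; 1_F \ast (\epsilon \circ \epsilon') \ast 1_G.$$
The middle $2$-morphism $\epsilon \circ \epsilon' \colon 1_C \Rightarrow \perp \Rightarrow 1_C$ is precisely the composite whose equality with the identity is the unity-trace condition \emph{for} the bimonad $\perp = GF$, whose monad unit is $\epsilon'$ and whose comonad counit is $\epsilon$. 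By hypothesis $\epsilon \circ \epsilon' = 1_{1_C}$, whence $\mu \circ \mu' = 1_F \ast 1_{1_C} \ast 1_G = 1_{FG} = 1_T$, which is the collapsable-diamond condition for $T$.

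The argument is thus essentially pure bookkeeping, and the step I would check most carefully is the matching of conventions: that the $\mu'$ occurring in \ref{def/unity-trace} is indeed $1_F \ast \epsilon' \ast 1_G$ with $\epsilon'$ the unit of $F \dashv G$ (rather than some differently-placed whiskering), and that ``unity trace for $\perp$'' is read as $\epsilon \circ \epsilon' = 1_{1_C}$ and not as a statement about $T$. Once those identifications are pinned down, functoriality of whiskering closes the proof immediately, with no recourse to the triangle identities of either adjunction.
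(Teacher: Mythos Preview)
Your proof is correct and is essentially identical to the paper's: both unwind $\mu$ and $\mu'$ as whiskerings $1_F \ast \epsilon \ast 1_G$ and $1_F \ast \epsilon' \ast 1_G$, use functoriality of whiskering to collapse the vertical composite to $1_F \ast (\epsilon \circ \epsilon') \ast 1_G$, and then invoke the unity-trace hypothesis for $\perp$ to replace $\epsilon \circ \epsilon'$ by $1_{1_C}$. Your added discussion verifying the convention-matching is a useful sanity check but does not constitute a different route.
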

\begin{Proof}
  \begin{align*}
    &\quad (T \xrightarrow{\mu'} T^{2} \xrightarrow{\mu} T) \\
    = &\quad (FG \xrightarrow{1_{F} \ast \epsilon' \ast 1_{G}} F(GF)G \xrightarrow{1_{F} \ast \epsilon \ast 1_{G}} FG) \\
    = &\quad 1_{F} \ast (1 \xrightarrow{\epsilon'} \perp \xrightarrow{\epsilon} 1) \ast 1_{G} \\
    = &\quad 1_{F} \ast 1_{(1_{C})} \ast 1_{G} = 1_{T}
  \end{align*}
\end{Proof}

The unity trace condition turns out to be crucial for our work -
essentially it guarantees an averaging map analogue to that in
the theory of finite group representations.

\begin{lemma}
  Let $T = (T, \eta, \mu, \epsilon, \Delta)$ be a $D$-bimonad of unity trace. Then
  for each $D$-object $x$ and $y$, the morphism
  $[D(x,y) \xrightarrow{\eta_{y} \circ (-)} D(x,Ty)]$ is monic, the
  arrow $[D(x,Ty) \xrightarrow{\eta'_{y} \circ (-)} D(x,y)]$ is epic,
  and moreover the map $(\eta_{y} \circ \eta_{y}')$ is a projection map onto
  the image of $(\eta_{y} \circ -)$.
\end{lemma}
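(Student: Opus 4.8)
The whole statement is an easy consequence of the one component identity hidden in the hypothesis, namely that for every $D$-object $y$ one has $\eta'_{y}\circ\eta_{y}=1_{y}$ in $D$; this is simply the $y$-component of the $2$-morphism equality $(1_{D}\xrightarrow{\eta}T\xrightarrow{\eta'}1_{D})=1_{(1_{D})}$ of \ref{def/unity-trace}. So the plan is first to isolate this identity explicitly, and then to run three short ``split idempotent'' arguments purely at the level of morphisms in $D$. Throughout, ``monic'' and ``epic'' for a map of hom-sets just mean injective and surjective.

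For the first claim I would show $\eta_{y}\circ(-)\colon D(x,y)\to D(x,Ty)$ is injective: if $\eta_{y}\circ f=\eta_{y}\circ g$, post-compose with $\eta'_{y}$ and use $\eta'_{y}\circ\eta_{y}=1_{y}$ to conclude $f=g$. For the second, $\eta'_{y}\circ(-)\colon D(x,Ty)\to D(x,y)$ is surjective because any $h\in D(x,y)$ has the explicit preimage $\eta_{y}\circ h$, since $\eta'_{y}\circ(\eta_{y}\circ h)=(\eta'_{y}\circ\eta_{y})\circ h=h$. In fact this exhibits the two maps as split by one another, which is exactly what the last claim records.

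For the projection statement, set $p:=\eta_{y}\circ\eta'_{y}\in\End_{D}(Ty)$ and consider the induced endomorphism $p\circ(-)$ of $D(x,Ty)$. Idempotence: $p\circ p=\eta_{y}\circ(\eta'_{y}\circ\eta_{y})\circ\eta'_{y}=\eta_{y}\circ 1_{y}\circ\eta'_{y}=p$, so $p\circ(-)$ is idempotent. Image: for any $g$ we have $p\circ g=\eta_{y}\circ(\eta'_{y}\circ g)\in\im(\eta_{y}\circ(-))$, while for $g=\eta_{y}\circ f$ we get $p\circ g=\eta_{y}\circ(\eta'_{y}\circ\eta_{y})\circ f=\eta_{y}\circ f=g$; hence $\im(p\circ(-))=\im(\eta_{y}\circ(-))$ and $p\circ(-)$ restricts to the identity on that subspace, so it is the asserted projection onto it.

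I do not expect a real obstacle here: all the content is packed into the unity-trace hypothesis, and the rest is bookkeeping in the hom-set/$2$-categorical calculus. The only point that warrants care is the whiskering convention — making sure that the component at $y$ of the pasted $2$-morphism ``$\eta'$ after $\eta$'' is indeed $\eta'_{y}\circ\eta_{y}$ and not some transpose — so I would write that component identity down first and then never leave the category $D$ again.
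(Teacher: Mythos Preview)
Your proof is correct and follows essentially the same approach as the paper's: extract the componentwise identity $\eta'_{y}\circ\eta_{y}=1_{y}$ from the unity-trace hypothesis, then deduce injectivity, surjectivity, and idempotence of $\eta_{y}\circ\eta'_{y}$ by straightforward split-idempotent bookkeeping. You are more explicit than the paper in verifying that the image of the projection is exactly $\im(\eta_{y}\circ(-))$, which the paper leaves implicit.
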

\begin{Proof}
  The unity trace condition says that $\eta_{y}' \circ \eta_{y} = 1_{y}$,
  so the first two conditions follow. The last statement is also
  evident since
  $$(\eta'_{y}\eta_{y})^{2} = \eta'_{y} 1_{y} \eta_{y} = \eta'_{y}\eta_{y}.$$
\end{Proof}

Therefore, in the context of (strict) ambidextrous adjoint
functors, the unity trace condition yields a projection map
$$C(Gx,Gy) \xrightarrow{\pi_{x,y}} D(x,y)$$ from the equivalence
$C(Gx,Gy) \simeq D(x,Ty)$. In the next lemma, we see that this
projection is functorial without any extra assumption.

\begin{theorem}\label{thm/unity-trace-implies-monadic-projection}
  Let $F : C \leftrightarrow D : G$ be a pair of strictly ambidextrous adjoint
  functors, and $T=FG$ be the naturally induced bimonad on $D$.
  If $T$ is of unity trace, then $\pi_{x,y}$ is functorial in the
  sense that
  \begin{enumerate}
    \item $\pi_{x,x}(Gx \xrightarrow{1_{Gx}} Gx) = 1_{x}.$
    \item For $C$-morphisms
          $(Gx \xrightarrow{\phi} Gy \xrightarrow{\sigma} Gz)$, we
          have $$(x \xrightarrow{\pi_{x,y}\phi} y \xrightarrow{\pi_{y,z}\sigma} z) = (x \xrightarrow{\pi_{x,z}(\sigma\phi)} z).$$
  \end{enumerate}
\end{theorem}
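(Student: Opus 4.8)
\begin{Proof}[Proof idea]
The plan is to transport the statement across the adjunction isomorphism $\Phi_{d,c}\colon C(Gd,c)\xrightarrow{\sim}D(d,Fc)$ and then verify the two conditions by a short computation with the structure maps of the bimonad $T=FG$. Write $\eta\colon 1_D\Rightarrow T$ and $\epsilon\colon T\Rightarrow 1_D$ for the unit and counit of the bimonad (so $\eta$ is the unit of the adjunction making $F$ a right adjoint of $G$, and $\epsilon$ is the counit of the one making $F$ a left adjoint of $G$), and let $\varepsilon\colon GF\Rightarrow 1_C$ be the remaining counit, so that $\mu=1_F\ast\varepsilon\ast 1_G$. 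By \ref{lemma/adjunction-in-monadic-term} one has $\Phi_{d,c}(\phi)=F(\phi)\circ\eta_d$, hence the projection $\pi_{x,y}=(\epsilon_y\circ -)\circ\Phi_{x,Gy}$ takes the explicit form
$$\pi_{x,y}(\phi)=\epsilon_y\circ F(\phi)\circ\eta_x.$$
In this shape condition (1) is immediate: $\pi_{x,x}(1_{Gx})=\epsilon_x\circ F(1_{Gx})\circ\eta_x=\epsilon_x\circ\eta_x$, which is $1_x$ precisely because $T$ is of unity trace.

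For condition (2) I would set $f:=\Phi_{x,Gy}(\phi)\in D(x,Ty)$ and $g:=\Phi_{y,Gz}(\sigma)\in D(y,Tz)$, so that $\pi_{x,y}(\phi)=\epsilon_y\circ f$ and $\pi_{y,z}(\sigma)=\epsilon_z\circ g$. By the theorem identifying ordinary composition in $C$ with the Kleisli composition on $D$ we have $\Phi_{x,Gz}(\sigma\circ\phi)=\mu_z\circ T(g)\circ f$, so that $\pi_{x,z}(\sigma\circ\phi)=\epsilon_z\circ\mu_z\circ T(g)\circ f$. On the other hand, naturality of $\epsilon$ at the $D$-morphism $g$ gives $g\circ\epsilon_y=\epsilon_{Tz}\circ T(g)$, whence $\pi_{y,z}(\sigma)\circ\pi_{x,y}(\phi)=\epsilon_z\circ g\circ\epsilon_y\circ f=\epsilon_z\circ\epsilon_{Tz}\circ T(g)\circ f$. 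Since $\Phi$ is a bijection, $f$ and $g$ exhaust $D(x,Ty)$ and $D(y,Tz)$ as $\phi$ and $\sigma$ vary, so condition (2) is equivalent to the single natural identity
$$\epsilon\circ\mu=\epsilon\circ(\epsilon\ast 1_T)\colon T^2\Longrightarrow 1_D.$$

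Establishing that identity is the heart of the matter, and I expect it to be the main obstacle. The approach I would take is to expand $\mu=1_F\ast\varepsilon\ast 1_G$, so that the identity unwinds at an object $z$ to $\epsilon_z\circ F(\varepsilon_{Gz})=\epsilon_z\circ\epsilon_{Tz}$, and then rewrite the right-hand side by naturality of $\epsilon$ as $\epsilon_z\circ FG(\epsilon_z)$. Everything thus reduces to comparing, after applying $\epsilon_z\circ F(-)$, the two natural retractions $\varepsilon_{Gz}$ and $G(\epsilon_z)$ of $G(\eta_z)\colon Gz\to GFGz$: the triangle identity of the adjunction behind $\mu$ gives $\varepsilon_{Gz}\circ G(\eta_z)=1_{Gz}$, while whiskering the unity trace equation $\epsilon\circ\eta=1$ by $G$ gives $G(\epsilon_z)\circ G(\eta_z)=1_{Gz}$. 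This is the one place where the unity trace hypothesis is consumed essentially; should it not by itself force the required coincidence, the collapsable-diamond consequence of unity trace for $\perp=GF$ is the natural extra ingredient to feed in. Granting the identity, condition (2) follows from the displayed computation, and since everything in sight is built from $\eta$, $\epsilon$, $\mu$ and the given functors, no further bookkeeping is needed; conditions (1) and (2) together are exactly the asserted functoriality of $\pi$.
\end{Proof}
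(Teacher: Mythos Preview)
Your reduction is clean and correct: unwinding $\pi$ as $\epsilon_y\circ F(-)\circ\eta_x$, using the Kleisli description of composition, and pushing $\epsilon$ through $g$ by naturality does reduce statement (2) to the single identity $\epsilon\circ\mu=\epsilon\circ(\epsilon\ast 1_T)$. That reduction is different in flavour from the paper's argument, which manipulates the composite $\eta'_z F(\sigma)\,\eta_y\,\eta'_y F(\phi)\,\eta_x$ directly using naturality of the \emph{unit} $\eta$ and an interchange step $\eta_{Tx}\circ\eta_x=T(\eta_x)\circ\eta_x$.

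The gap you flag is real, and it cannot be closed with the stated hypotheses. Your two proposed fixes both fall short: two retractions $\varepsilon_{Gz}$ and $G(\epsilon_z)$ of the same section $G(\eta_z)$ need not agree, even after composing with $\epsilon_z\circ F(-)$; and ``collapsable diamond'' for $T$ requires unity trace of $\perp=GF$, which is not assumed. In fact the identity $\epsilon\circ\mu=\epsilon\circ(\epsilon\ast 1_T)$ is \emph{false} under the hypotheses of the theorem. Take $D=\mathrm{Vec}_{\Bbbk}$, $C=A\text{-mod}$ for a Frobenius algebra $(A,\lambda)$, $G=(-)\otimes A$ the free functor and $F$ the forgetful functor; this is an ambidextrous adjunction with $T=(-)\otimes A$, $\eta_V(v)=v\otimes 1_A$, $\epsilon_V(v\otimes a)=\lambda(a)v$, and unity trace of $T$ is exactly $\lambda(1_A)=1$. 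For $x=y=z=\Bbbk$ one finds $\pi(\phi_a)=\lambda(a)$ where $\phi_a$ is right multiplication by $a$, so functoriality of $\pi$ becomes $\lambda(ab)=\lambda(a)\lambda(b)$, which fails for $A=\Bbbk[x]/(x^2)$ with $\lambda(a+bx)=a+b$. The paper's own proof has the matching soft spot: its final step asserts $T(\eta'_y)\circ TF(\phi)\circ T(\eta_x)\circ\eta_x=F(\phi)\circ\eta_x$, which in this example reads $\lambda(a)\,1_A=a$ and is equally false. What rescues the paper is the intended application: in Example~\ref{example/main-example-of-monadic-projection} both $T$ and $\perp$ are of unity trace and a direct graphical argument (the remark following the theorem) establishes the needed identity there. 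So your diagnosis that ``this is the main obstacle'' is exactly right; the missing ingredient is not a trick but a genuine extra hypothesis.
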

\begin{Proof}
  By the unity trace condition and
  \ref{lemma/adjunction-in-monadic-term},
  $$\pi_{x,x}(1_{Gx}) = \eta'_{x} \circ (1_{FGx} \circ \eta_{x}) = 1_{X},$$
  proving the first statement. It remains to prove that
  $$ (\eta_{z}' \circ F(\sigma) \circ \eta_{y}) \circ (\eta_{y}' \circ F(\phi) \circ \eta_{x}) = (\eta_{z}' \circ F(\sigma\phi) \circ \eta_{x}).$$
  Indeed,

  \begin{align*}
    &\quad \Big( z \xleftarrow{\eta'_{z}} Tz \xleftarrow{F(\sigma)} Ty \xleftarrow{\eta_{y}} y \xleftarrow{\eta'_{y}} Ty \xleftarrow{F(\phi)} Tx \xleftarrow{\eta_{x}} x \Big) \\
    = &\quad \Big( z \xleftarrow{\eta'_{z}} Tz \xleftarrow{F(\sigma)} Ty \xleftarrow{T\eta_{y}'} T^{2}y \xleftarrow{\eta_{Ty}} Ty \xleftarrow{F(\phi)} Tx \xleftarrow{\eta_{x}} x \Big)\\
    = &\quad \Big( z \xleftarrow{\eta'_{z}} Tz \xleftarrow{F(\sigma)} Ty \xleftarrow{T\eta_{y}'} T^{2}y \xleftarrow{T(F(\phi))} T^{2}x \xleftarrow{\eta_{Tx}} Tx \xleftarrow{\eta_{x}} x \Big)\\
    = &\quad \Big( z \xleftarrow{\eta'_{z}} Tz \xleftarrow{F(\sigma)} Ty \xleftarrow{T\eta_{y}'} T^{2}y \xleftarrow{T(F(\phi))} T^{2}x \xleftarrow{T\eta_{x}} Tx \xleftarrow{\eta_{x}} x \Big)\\
    = &\quad \Big( z \xleftarrow{\eta'_{z}} Tz \xleftarrow{F(\sigma)} Ty \xleftarrow{F(\phi)} Tx \xleftarrow{\eta_{x}} x \Big)
  \end{align*}

  \noindent The first two equalities follow from the naturality
  of $\eta$. The third equality $T_{(\eta_{x})} = \eta_{(Tx)}$ follows from
  the Eckmann-Hilton argument
  $$\eta \ast 1_{T} = 1_{T} \ast \eta.$$
  Finally, the last equality follows from all the tricks and
  conditions: that $\eta$ is natural, that $T \circ \eta = \eta \circ T$, that $T$
  is functorial, and the unity trace condition.
\end{Proof}

\begin{remark}
  In the context of categorical center of higher genera, the
  proof above translates into the following graphical proof,
  where the orange dotted lines represent the shorthand notation
  $\Omega$ given in \ref{def/Omega}.
  \begin{center}
    \includegraphics[height=4cm]{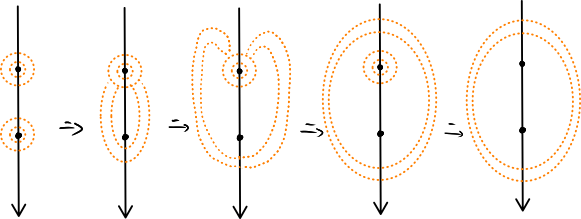}
  \end{center}
\end{remark}

In the proof, it is tempting to demand $\eta_{y} \circ \eta_{y}'$ to be
identity, which would have finished the proof right away.
However, it is not necessarily true. In fact, it is false in our
context. The best one can say about it is that it is idempotent.

\begin{example}\label{example/main-example-of-monadic-projection}
  Let $C$ be a premodular category, $\sigma$ be an admissible gluing,
  $D$ be the categorical center of higher genera $Z_{\sigma}(C)$, $F$
  be the induction functor $C \xrightarrow{I_{\sigma}} Z_{\sigma}(C)$, and
  $G$ be the forgetful functor $C \xleftarrow{F_{\sigma}} Z_{\sigma}(C)$.

  By \ref{theorem/F-and-I-are-ambidextrous-adjoint}, both
  functors are strictly ambidextrous to each other. Moreover, it
  is clear by their definitions that the bi-monads they form are
  of unity trace. Therefore, by
  \ref{thm/unity-trace-implies-monadic-projection}, we have the
  followings.

  \begin{enumerate}
    \item $D((X,\gamma), (Y, \beta))$ embeds into $C(X,Y)$ naturally, with
          a natural projection $\pi_{\gamma,\beta}$ onto the subspace.
    \item $C(X,Y)$ embeds into $D(I_{\sigma}(X), I_{\sigma}(Y))$ naturally,
          with a natural projection $\pi_{X,Y}$ onto the subspace.
  \end{enumerate}
\end{example}

This is an analogue of the averaging map one has in the theory of
finite dimensional complex linear representations of finite
groups.

\subsection{Misc proofs}

Statements and proofs that could break the flow of are collected
in this section. The readers are advised to use it as a
reference.

\begin{lemma}\label{lemma/higher-cat-center-is-abelian}
  Let $C$ be a premodular category and $\sigma \in \Adm_{2n}$ an
  admissible gluing. Then the categorical center of higher genera
  $Z_{\sigma}(C)$ is an abelian category.
\end{lemma}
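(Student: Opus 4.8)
The plan is to verify the four ingredients of the summary characterization \ref{lemma/summary-for-abelian-category}: pre-additivity, additivity, pre-abelianity, and exactness; by \ref{lemma/exact-pre-abelian-is-abelian} it is in fact enough to exhibit a pre-abelian structure and then check that the canonical factorization of each morphism has an invertible middle arrow. The organizing device throughout is the forgetful functor $F_{\sigma} : Z_{\sigma}(C) \to C$ of \ref{def/sigma-forgetful-functor}, which is faithful and acts on morphisms by the inclusion $[\bar{X},\bar{Y}]_{\sigma} \subseteq Hom_{C}(X,Y)$. I will arrange all the relevant (co)limits so that $F_{\sigma}$ preserves them, and then import their existence and the desired exactness from the ambient abelian category $C$.

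Pre-additivity is immediate, since $[\bar{X},\bar{Y}]_{\sigma}$ is a $\mathbb{k}$-subspace of $Hom_{C}(X,Y)$ and composition is the restriction of the bilinear composition of $C$. For additivity: the zero object of $C$ carries the unique (empty) family of half-braidings and is both initial and terminal in $Z_{\sigma}(C)$; and given $\sigma$-pairs $(X,\gamma)$ and $(Y,\beta)$, the object $(X\oplus Y, \gamma\oplus\beta)$, with $(\gamma\oplus\beta)_{[k],Z}$ the direct sum of $\gamma_{[k],Z}$ and $\beta_{[k],Z}$ under the canonical distributivity isomorphisms, is again a $\sigma$-pair, because a direct sum of half-braidings is a half-braiding and each relation \texttt{(:comm 1)}, \texttt{(:comm 2)}, \texttt{(:comm 3)} of \ref{def/comm-relation} is an equality of $\mathbb{k}$-linear composites of structural maps, hence is stable under $\oplus$; the four biproduct morphisms are then $\sigma$-morphisms for the same reason. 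Thus $F_{\sigma}$ is additive and preserves biproducts and the zero object.

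For kernels, let $f : (X,\gamma)\to(Y,\beta)$ be a $\sigma$-morphism and $k : K \hookrightarrow X$ its kernel in $C$. Since $C$ is a multifusion category, $Z\otimes(-)$ and $(-)\otimes Z$ are exact (\ref{lemma/tensor-is-biexact-in-multitensor-cats}), so $1_{Z}\otimes k$ is the kernel of $1_{Z}\otimes f$ and $k\otimes 1_{Z}$ the kernel of $f\otimes 1_{Z}$ (and both remain monic). The compatibility $\beta_{Z}(1\otimes f)=(f\otimes 1)\gamma_{Z}$ (equation \ref{def/sigma-morphism-relation}) shows $\gamma_{Z}\circ(1_{Z}\otimes k)$ is killed by $f\otimes 1_{Z}$, hence factors uniquely as $(k\otimes 1_{Z})\circ\gamma^{K}_{Z}$; this defines the candidate half-braidings $\gamma^{K}=\{\gamma^{K}_{[k]}\}$ on $K$. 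One checks $\gamma^{K}$ is natural in $Z$, satisfies the hexagon of \ref{def/half-braiding}, and satisfies the relations of \ref{def/comm-relation}: each such identity, after post-composition with a suitable monic obtained by tensoring $k$ with identities, reduces via the defining property of $\gamma^{K}$ to the corresponding identity for $\gamma$, and then one cancels the monic on the left. So $(K,\gamma^{K})$ is a $\sigma$-pair and $k$ a $\sigma$-morphism, and its universal property in $Z_{\sigma}(C)$ is inherited: if $g:(W,\delta)\to(X,\gamma)$ is a $\sigma$-morphism with $fg=0$, the unique $C$-factorization $\tilde{g}:W\to K$ satisfies $(k\otimes 1)\gamma^{K}(1\otimes\tilde{g})=\gamma(1\otimes k)(1\otimes\tilde{g})=\gamma(1\otimes g)=(g\otimes 1)\delta=(k\otimes 1)(\tilde{g}\otimes 1)\delta$, and left-cancelling the monic $k\otimes 1$ shows $\tilde{g}$ is a $\sigma$-morphism. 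Cokernels are handled by the dual argument using right-exactness of $\otimes$. Hence $Z_{\sigma}(C)$ is pre-abelian and $F_{\sigma}$ preserves kernels and cokernels.

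Finally, exactness. The canonical factorization of a $\sigma$-morphism $f$ in $Z_{\sigma}(C)$ is sent by the faithful, kernel- and cokernel-preserving functor $F_{\sigma}$ to the canonical factorization of $F_{\sigma}(f)$ in $C$; since $C$ is abelian, hence exact (\ref{lemma/exact-pre-abelian-is-abelian}), the middle arrow $F_{\sigma}(f')$ is an isomorphism of $C$. But $F_{\sigma}$ reflects isomorphisms: if $h$ is a $\sigma$-morphism with a $C$-inverse $h^{-1}$, then multiplying $\beta_{Z}(1\otimes h)=(h\otimes 1)\gamma_{Z}$ on the left by $h^{-1}\otimes 1$ and on the right by $1\otimes h^{-1}$ yields $\gamma_{Z}(1\otimes h^{-1})=(h^{-1}\otimes 1)\beta_{Z}$, so $h^{-1}$ is itself a $\sigma$-morphism, and faithfulness upgrades this to an inverse in $Z_{\sigma}(C)$. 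Therefore $f'$ is an isomorphism in $Z_{\sigma}(C)$, so $Z_{\sigma}(C)$ is exact and, by \ref{lemma/exact-pre-abelian-is-abelian}, abelian. The only genuinely non-formal step is the descent of the relations \texttt{(:comm 1)}, \texttt{(:comm 2)}, \texttt{(:comm 3)} to the induced half-braidings on kernels and cokernels; everything else is a formal consequence of biexactness of $\otimes$ in a multifusion category together with cancellation of monics and epis. A more structural alternative would route through the natural idempotent $\pi_{\gamma,\beta}$ of \ref{corollary/monadic-projection}, exhibiting $Z_{\sigma}(C)$ as carved out of $C$ by a composition-compatible projection on hom-spaces, but the direct verification above is self-contained.
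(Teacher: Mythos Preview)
Your proof is correct and follows the same overall scaffold as the paper's: verify additivity, build (co)kernels in $Z_{\sigma}(C)$ by lifting those of $C$ along biexactness of $\otimes$, and then deduce exactness from that of $C$ via the forgetful functor. The one substantive divergence is in the universal property of the kernel. You verify directly that the unique $C$-factorization $\tilde{g}$ is a $\sigma$-morphism by the monic-cancellation calculation $(k\otimes 1)\gamma^{K}(1\otimes\tilde{g})=(k\otimes 1)(\tilde{g}\otimes 1)\delta$, which is elementary and self-contained. The paper instead invokes the monadic projection $\pi$ of \ref{corollary/monadic-projection} (coming from the ambidextrous adjunction $I_{\sigma}\dashv F_{\sigma}\dashv I_{\sigma}$): from $h=m\circ k$ it deduces $h=\pi(h)=\pi(m)\circ\pi(k)=m\circ\pi(k)$, whence $k=\pi(k)$ lies in $Z_{\sigma}(C)$. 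Your route avoids the adjunction machinery entirely; the paper's route is shorter once that machinery is in place and foreshadows its reuse in the semisimplicity proof \ref{lemma/higher-cat-center-is-semisimple}. You even flag this alternative in your closing remark, so you are aware of both options. For exactness, you spell out that $F_{\sigma}$ reflects isomorphisms (showing the $C$-inverse of a $\sigma$-morphism is again a $\sigma$-morphism), which the paper leaves implicit in the sentence ``the (co)kernels are really the same as in $C$''; your version is the cleaner justification.
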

\begin{Proof}
  By \ref{lemma/summary-for-abelian-category} we need to show
  that $Z_{\sigma}(C)$ is an additive category such that

  \begin{itemize}
    \item every morphism in $Z_{\sigma}(C)$ has a kernel and a cokernel.
    \item $Z_{\sigma}(C)$ is an exact category.
  \end{itemize}

  \noindent By its definition, $Z_{\sigma}(C)$ is additive. To prove
  that every morphism has a kernel and a cokernel, first let
  $(X,\gamma) \xrightarrow{f} (Y,\beta)$ be a $Z_{\sigma}(C)$-morphism. Recall by
  definition that $f$ is a $C$-morphism $X \xrightarrow{f} Y$ that
  respects both sets of half-braidings $\gamma$ and $\beta$. Since $C$ is
  premodular thus abelian, $f$ has a kernel
  $K \xhookrightarrow{m} X$ in $C$. We will construct a
  $Z_{\sigma}(C)$-object $(K, m^{\star}\gamma)$ such that
  $(K, m^{\star}) \xrightarrow{m} (X,\gamma)$ is a $Z_{\sigma}(C)$-morphism and is
  in fact a kernel of $f$.

  To construct $m^{\star}\gamma$, notice that all we need is a set of
  half-braidings for $K$ that work compatibly with $\gamma$. As
  $$K \xrightarrow{m} X \xrightarrow{f} Y$$
  is exact and that $\otimes$ is bi-exact
  \ref{lemma/tensor-is-biexact-in-multitensor-cats}, we see that
  $1_{(-)} \otimes m$ and $m \otimes 1_{(-)}$ are kernels of $1_{(-)} \otimes f$
  and $f \otimes 1_{(-)}$ respectively. Therefore,
  $\gamma_{[i]} \circ (m \otimes 1_{(-)})$ factors through $1_{(-)} \otimes m$
  uniquely. Ditto for the other direction. So defines an natural
  isomorphism $$ K \otimes (-) \xrightarrow{(m^{\star}\gamma)_{[i]}} (-) \otimes K.$$
  Define so for all other $i$'s. it's straightforward to prove
  that $m^{\star}\gamma$ is a $\sigma$-pair \ref{def/sigma-pair} from that $\gamma$
  is also one.

  From the construction above, clearly
  $$(K, m^{\star}\gamma) \xrightarrow{m} (X, \gamma)$$
  is a $Z_{\sigma}(C)$-arrow. It remains to show that $m$ is
  indeed a kernel of $f$ in $Z_{\sigma}(C)$. Let
  $(W,\alpha) \xrightarrow{h} (X,\gamma)$ be a
  $Z_{\sigma}(C)$-morphism such that $fh = 0$. Then $h$ uniquely
  factors through $m$ by some $C$-morphism $k$. The crux is to
  show that $k$ is indeed a $Z_{\sigma}(C)$-morphism. But indeed,
  by the projection
  \ref{example/main-example-of-monadic-projection} we have
  \begin{align*}
    &\quad h = m \circ k \\
    \Rightarrow &\quad \pi(h) = \pi(m \circ k) = \pi(m) \circ \pi(k) \\
    \Rightarrow &\quad h = m \circ \pi(k)
  \end{align*}
  But since $k$ is unique, we have $k = \pi(k)$, which is indeed a
  morphism in $Z_{\sigma}(C)$. Therefore, $m$ is a kernel of $f$. The
  argument works for the cokernel, and is thus omitted. A
  corollary of this construction is that the (co)kernels are
  really the same as in $C$, so $Z_{\sigma}(C)$ is clearly exact since
  $C$ is exact.
\end{Proof}

\begin{lemma}\label{lemma/higher-cat-center-is-semisimple}
  Let $C$ be a premodular category and $\sigma \in \Adm_{2n}$ an
  admissible gluing. Recall from
  \ref{lemma/higher-cat-center-is-abelian} that the categorical
  center of higher genera $Z_{\sigma}(C)$ is abelian. Moreover, it is
  semisimple.
\end{lemma}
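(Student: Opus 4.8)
The plan is to bootstrap semisimplicity of $Z_{\sigma}(C)$ from that of $C$, using the strictly ambidextrous adjunction $I_{\sigma}:C\rightleftarrows Z_{\sigma}(C):F_{\sigma}$ of \ref{theorem/F-and-I-are-ambidextrous-adjoint} together with the compositional projection $\pi$ it produces (\ref{corollary/monadic-projection}, \ref{example/main-example-of-monadic-projection}). Two preliminary observations will be used throughout. First, $F_{\sigma}$ is exact --- it is simultaneously a left and a right adjoint of $I_{\sigma}$, and the proof of \ref{lemma/higher-cat-center-is-abelian} even shows that (co)kernels in $Z_{\sigma}(C)$ are computed as (co)kernels of the underlying morphisms in $C$ --- and $F_{\sigma}$ is faithful, since $Hom_{Z_{\sigma}(C)}(\overline{X},\overline{Y})$ is by construction a subspace of $Hom_{C}(X,Y)$; in particular $F_{\sigma}(M)=0$ forces $M=0$. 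Second, I record that every object $(X,\gamma)$ of $Z_{\sigma}(C)$ has finite length: a strictly increasing chain of subobjects of $(X,\gamma)$ is sent by $F_{\sigma}$ to a strictly increasing chain of subobjects of $X$ in $C$, which must terminate because $C$ is finite semisimple.

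The core step is to show that every short exact sequence $0\to\overline{A}\xrightarrow{m}\overline{B}\xrightarrow{f}\overline{D}\to 0$ in $Z_{\sigma}(C)$ splits. I would apply the exact functor $F_{\sigma}$ to obtain a short exact sequence in $C$; this splits in $C$ by semisimplicity, so there is a $C$-morphism $s$ with $f\circ s=\id_{D}$ in $C$. Then I push $s$ through the projection $\pi$ of \ref{example/main-example-of-monadic-projection} (applied with the half-braiding data of $\overline{D}$ as source and of $\overline{B}$ as target) to get an honest $Z_{\sigma}(C)$-morphism $\widetilde{s}:=\pi(s)\colon\overline{D}\to\overline{B}$. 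Because $\pi$ is the identity on morphisms that already lie in $Z_{\sigma}(C)$, respects composition, and carries identities to identities (\ref{thm/unity-trace-implies-monadic-projection}),
\[
  f\circ\widetilde{s}\;=\;\pi(f)\circ\pi(s)\;=\;\pi(f\circ s)\;=\;\pi(\id_{D})\;=\;\id_{\overline{D}},
\]
so $\widetilde{s}$ is a section of $f$ in $Z_{\sigma}(C)$ and the sequence splits. (Equivalently, applying $\pi$ to a $C$-retraction of $m$ shows that every monomorphism in $Z_{\sigma}(C)$ is split.)

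It then remains to combine the two facts: in an abelian category in which every object has finite length and every short exact sequence splits, an induction on length shows that every nonzero object contains a simple subobject which splits off as a direct summand, so every object is a finite direct sum of simple objects --- i.e. $Z_{\sigma}(C)$ is semisimple in the sense of the appendix. I expect no genuine difficulty beyond the machinery already in hand; the one place demanding care is the indexing bookkeeping for $\pi$, namely feeding it the correct source/target half-braidings and invoking its functoriality exactly as in \ref{thm/unity-trace-implies-monadic-projection}, and, slightly more substantively, the upgrade from ``every short exact sequence splits'' to ``every object is a direct sum of simples,'' which is precisely why the finite-length observation above is needed.
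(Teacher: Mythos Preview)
Your proposal is correct and follows essentially the same approach as the paper: both arguments split a morphism in $C$ (you take a section of an epi, the paper a retraction of a mono---you even note the equivalence) and then push the splitting through the projection $\pi$ of \ref{corollary/monadic-projection}, using exactly the computation $\pi(f)\circ\pi(s)=\pi(f\circ s)=\pi(\id)=\id$. Your write-up is actually somewhat more careful than the paper's, in that you make explicit the finite-length input needed to pass from ``every short exact sequence splits'' to ``every object is a finite direct sum of simples,'' whereas the paper leaves this implicit.
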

\begin{Proof}
  Let $(X,\gamma) \xrightarrow{f} (Y,\beta)$ be a monic morphism in
  $Z_{\sigma}(C)$. It suffices to show that $f$ has a left inverse.
  Recall that $f$ is also a $C$-morphism $X \xrightarrow{f} Y$. We
  contend that $X \xrightarrow{f} Y$ is monic in $C$. Indeed,
  assume
  $$(W \xrightarrow{g} X \xrightarrow{f} Y) = (W \xrightarrow{h} X \xrightarrow{f} Y)$$
  then
  $$
  (I_{\sigma}W \xrightarrow{\overline{g}} (X,\gamma) \xrightarrow{f} (Y,\beta)) =(I_{\sigma}W \xrightarrow{\overline{h}} (X,\gamma) \xrightarrow{f} (Y,\beta))$$
  by the construction of $I_{\sigma}$. Then
  $\overline{g} = \overline{h}$, and thus $g=h$.

  Since $C$ is semisimple, we get a left inverse
  $X \xleftarrow{p} Y$ for free. However, $p$ lives in $C$, so we
  need to find another candidate that does the job in $Z_{\sigma}(C)$.
  This is again taken care by the projection
  \ref{example/main-example-of-monadic-projection} We contend
  that it is a left inverse of $f$ in $Z_{\sigma}(C)$. Indeed, as
  $\pi_{\beta,\gamma}$ is a projection, $\pi_{\beta,\gamma}(f) = f$. So,
  $$
  \pi_{\beta,\gamma}(p) \circ f = \pi_{\beta,\gamma}(p) \circ \pi_{\gamma,\beta}(f) = \pi_{\gamma,\gamma}(p \circ f) = \pi_{\gamma,\gamma}(1_{X}) = 1_{X}.
  $$
\end{Proof}

\begin{lemma}\label{lemma/higher-cat-center-is-finite}
  Let $C$ be a premodular category and $\sigma \in \Adm_{2n}$ an
  admissible gluing. Recall from
  \ref{lemma/higher-cat-center-is-abelian} and
  \ref{lemma/higher-cat-center-is-semisimple} that the
  categorical center of higher genera $Z_{\sigma}(C)$ is semisimple
  abelian. Moreover, it is finite.
\end{lemma}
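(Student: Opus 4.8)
The plan is to unwind the definition of a finite abelian category (\ref{def/finite-abelian-category-over-a-field}) and verify its clauses one at a time, leaning throughout on the ambidextrous adjunction $I_{\sigma} : C \rightleftarrows Z_{\sigma}(C) : F_{\sigma}$ of \ref{theorem/F-and-I-are-ambidextrous-adjoint}. Since $Z_{\sigma}(C)$ is already known to be semisimple abelian (\ref{lemma/higher-cat-center-is-abelian}, \ref{lemma/higher-cat-center-is-semisimple}), the ``enough projectives'' clause is automatic: in a semisimple category every object is projective, so every simple object is its own projective cover. For finite dimensionality of hom spaces I would simply recall from \ref{def/sigma-morphism} that for $\sigma$-pairs $\bar{X} = (X,\gamma)$ and $\bar{Y} = (Y,\beta)$ the space $Hom_{Z_{\sigma}(C)}(\bar{X},\bar{Y})$ is by construction a linear subspace of $Hom_{C}(X,Y)$, which is finite dimensional because the fusion category $C$ is locally finite; hence every hom space in $Z_{\sigma}(C)$, and in particular every endomorphism algebra, is finite dimensional.

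The substantive point is that $Z_{\sigma}(C)$ has only finitely many isomorphism classes of simple objects, and here I would use the adjunction to show that induction from $C$ already produces all of them. Given a simple $\bar{S} = (S,\gamma)$, its image $S = F_{\sigma}(\bar{S})$ is a nonzero object of the semisimple category $C$, so $Hom_{C}(X,S) \neq 0$ for some simple $X \in \mathcal{O}(C)$; by adjunction
\[
Hom_{Z_{\sigma}(C)}(I_{\sigma}(X),\bar{S}) \;\simeq\; Hom_{C}(X,F_{\sigma}(\bar{S})) \;=\; Hom_{C}(X,S) \;\neq\; 0 ,
\]
so there is a nonzero morphism $I_{\sigma}(X) \to \bar{S}$; it is epic since $\bar{S}$ is simple, and it splits since $Z_{\sigma}(C)$ is semisimple, exhibiting $\bar{S}$ as a direct summand of $I_{\sigma}(X)$. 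Thus every simple of $Z_{\sigma}(C)$ occurs in $I_{\sigma}(X)$ for some $X$ in the \emph{finite} set $\mathcal{O}(C)$, and it remains only to note that each $I_{\sigma}(X)$ has finitely many distinct simple summands. This follows from the first paragraph: in a semisimple category the number of pairwise non-isomorphic simple summands of an object $M$ is at most $\dim_{\mathbb{k}} End(M)$ (since $End(M)$ is a finite product of matrix algebras), and $End_{Z_{\sigma}(C)}(I_{\sigma}(X))$ is finite dimensional; if an explicit bound is wanted one can use $F_{\sigma}(I_{\sigma}(X)) = X_{\sigma}$ of \ref{def/x-sigma}, a finite direct sum of objects of $C$ and hence of finite length there.

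Assembling the pieces: $Z_{\sigma}(C)$ is semisimple abelian with finite dimensional hom spaces, enough projectives, and finitely many simple objects up to isomorphism; moreover, once the simples are known to be finite in number, finite dimensionality of $Hom(\bar{S},M)$ for each simple $\bar{S}$ forces every object $M$ to be a \emph{finite} direct sum of simples, so $Z_{\sigma}(C)$ is locally finite. Hence $Z_{\sigma}(C)$ is a finite semisimple abelian category. I expect the only step requiring any thought to be the adjunction argument that every simple of $Z_{\sigma}(C)$ is induced from a simple of $C$ (in particular, that the epimorphism $I_{\sigma}(X)\to\bar{S}$ genuinely splits inside $Z_{\sigma}(C)$, which is immediate from semisimplicity); the remaining clauses are bookkeeping against definitions already recorded in the appendix.
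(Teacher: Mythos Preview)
Your proposal is correct and follows essentially the same route as the paper: both verify the four clauses of \ref{def/finite-abelian-category-over-a-field} by noting that hom spaces embed into those of $C$, that semisimplicity gives enough projectives, and that the adjunction of \ref{theorem/F-and-I-are-ambidextrous-adjoint} forces every simple of $Z_{\sigma}(C)$ to occur as a summand of some $I_{\sigma}(X)$ with $X\in\mathcal{O}(C)$. The only cosmetic differences are that the paper invokes the other side of the ambidextrous adjunction ($F_{\sigma}\dashv I_{\sigma}$ rather than $I_{\sigma}\dashv F_{\sigma}$) and handles finite length directly via $F_{\sigma}$ on filtrations rather than deducing it at the end; your version is if anything more explicit about why each $I_{\sigma}(X)$ contributes only finitely many simples.
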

\begin{Proof}
  To prove that $Z_{\sigma}(C)$ is finite, we turn to the finiteness
  of $C$. Since $Z_{\sigma}(C)$ is a $\mathbb{k}$-linear abelian
  category by construction, from
  \ref{def/locally-finite-abelian-category-over-a-field} and
  \ref{def/finite-abelian-category-over-a-field} we only have to
  show four things.

  \begin{itemize}
    \item Every object has finite length.
    \item Every hom space is a finite dimensional
          $\mathbb{k}$-vector space.
    \item $Z_{\sigma}(C)$ has enough projectives, i.e. every simple
          object of $Z_{\sigma}(C)$ has a projective cover.
    \item The set of isomorphism classes of simple objects is
          finite.
  \end{itemize}

  To prove that every object has finite length, pass a simple
  filtration of an object in $Z_{\sigma}(C)$ to one in $C$ by the
  forgetful functor $F_{\sigma}$. Extend the latter to a simple
  filtration in $C$, which has finite length as $C$ is assumed
  finite. Thus the former is also of finite length. To prove that
  every hom space is of finite dimensional, recall that the
  morphism spaces of $Z_{\sigma}(C)$ are defined as subspaces of those
  of $C$. Therefore the dimension of the former is bounded by the
  dimension of the later, which is finite the finiteness
  assumption of $C$.

  To prove that $Z_{\sigma}(C)$ has enough projectives, it
  suffices to show that $Z_{\sigma}(C)$ is semisimple, as then
  each epic morphism admits a left inverse. But this fact has been
  shown in \ref{lemma/higher-cat-center-is-semisimple}. To prove
  that there are only finitely many simple objects (up to
  isomorphism), we utilize the ambidextrous adjunction of
  $F_{\sigma}$ and $I_{\sigma}$. Let $(X,\gamma)$ be a simple
  object of $Z_{\sigma}(C)$. From
  $$Hom_{C}(X,Y) \simeq Hom_{Z_{\sigma}(C)}((X,\gamma), I_{\sigma}(Y))$$
  we know that $(X,\gamma)$ appears as a summand in $I(Y)$ for any $Y$
  that appears as a summand in $X$. Since $C$ has finitely many
  simple objects (up to isomorphism), it follows that there are
  finitely many such $(X,\gamma)$.
\end{Proof}

\begin{lemma}\label{lemma/induced-half-braiding-satisfies-comm-relations}
  The set of half-braidings defined in
  \ref{def/induced-half-braiding} satisfies the pairwise
  commutative relations \ref{def/comm-relation}.
\end{lemma}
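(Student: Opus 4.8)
The plan is to verify the three relations \texttt{(:comm 1)}, \texttt{(:comm 2)}, \texttt{(:comm 3)} one at a time by direct graphical calculus, using the explicit form of the induced half-braiding $\gamma_{[k]}$ from \ref{def/induced-half-braiding}. Recall that $\gamma_{[k]}$ is, up to the intertwiner normalizations supplied by \ref{lemma/fundamental-lemma-of-Omega}, the operation that passes the input strand between the $[k]'$-th and $[k]''$-th tensor factors of $X_{\sigma}$; since those two factors carry mutually dual colors summed against $\dim_{a}$, this is exactly a strand running once through the $k$-th ``handle'' of the presentation, and the sliding lemma \ref{lemma/sliding-lemma} guarantees that a second, unrelated strand may be moved freely past that handle. (That each individual $\gamma_{[k]}$ satisfies the hexagon axiom of \ref{def/half-braiding} is the familiar computation for induction functors and is handled by the same type of diagram chase, so I concentrate on the commutation relations.) First I would fix two orbits $[i]$ and $[j]$ and, using the index symmetry, reduce to the three canonical orderings of \ref{def/comm-relation}: the disjoint case $[i]'<[i]''<[j]'<[j]''$, the linked case $[i]'<[j]'<[i]''<[j]''$, and the nested case $[i]'<[j]'<[j]''<[i]''$.

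In each case I would draw the composite on the left of the relation and the composite on the right as string nets in the tensor product of the legs of $X_{\sigma}$ together with $Z_{1}$ and $Z_{2}$, and reduce one to the other by a short sequence of local moves: (a) the sliding lemma \ref{lemma/sliding-lemma}, to commute the $Z_{1}$- or $Z_{2}$-strand past any $\Omega$-colored block it does not interact with; (b) naturality of the braiding $c$ of $C$, to pull the various $c_{Z_{1},Z_{2}}^{\pm 1}$, $c_{Z_{1},X}$, $c_{X,Z_{1}}$ through the vertices; and (c) the defining identity of \ref{lemma/fundamental-lemma-of-Omega}, to contract or re-expand the auxiliary intertwiner pairs appearing in $\gamma_{[i]}$ and $\gamma_{[j]}$. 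The braiding factors on the right-hand side of each relation are precisely the corrections recording how the $Z_{1}$- and $Z_{2}$-strands cross each other, and, in the linked and nested cases, how they wrap the object-strand $X$, when the two handles are traversed in the two possible orders; after the moves (a)--(c) both sides become the same string net. I would exploit the observation already recorded in \ref{def/comm-relation} that \texttt{(:comm 1)} and \texttt{(:comm 3)} coincide after the substitution $\tilde{\gamma}_{[i],\mbox{-}}=c_{\mbox{-},X}c_{X,\mbox{-}}\gamma_{[i],\mbox{-}}$: once one of them is established, the other follows from the same chase with $\gamma_{[i]}$ replaced by $\tilde{\gamma}_{[i]}$. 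Functoriality in $Z_{1}$ and $Z_{2}$ is immediate, since every ingredient is natural.

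The step I expect to be the main obstacle is combinatorial bookkeeping rather than anything conceptual: one must insert the auxiliary braidings needed to make the relevant compositions well-defined (the same subtlety flagged in the remark following \ref{def/induced-half-braiding}), and then check that, after every $\Omega$-colored strand has been slid aside, the residual crossings on the two sides of each relation genuinely match --- it is here that one must be careful that a handle being \emph{nested} rather than \emph{linked} produces a braiding rather than its inverse, and that the orientations of the dual legs $X_{[k]'}$, $X_{[k]''}$ are tracked correctly throughout. A useful sanity check is that each relation becomes a manifest ambient isotopy once the $\gamma$'s are read as strands in $\Sigma_{\sigma}\times[0,1]$; but since that topological picture is only linked to $Z_{\sigma}(C)$ in Section \ref{section/proof-for-the-main-statement}, for a self-contained argument here one simply carries out the three diagram chases above.
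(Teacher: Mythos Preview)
Your proposal is correct and takes essentially the same approach as the paper: a case-by-case graphical verification for each of the three canonical orderings of $[i]',[i]'',[j]',[j]''$. The paper's own proof is purely pictorial --- one figure per case --- and the local moves you list (naturality of the braiding, the intertwiner identity of \ref{lemma/fundamental-lemma-of-Omega}, and isotopy) are exactly what those diagram chases encode; your observation that \texttt{(:comm 1)} and \texttt{(:comm 3)} collapse into a single check after the substitution $\tilde{\gamma}_{[i]}$ is a small economy the paper does not exploit, and the sliding lemma is not actually needed here since no closed $\Omega$-loop appears in the induced half-braidings.
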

\begin{Proof}
  By definition, we have to prove that for each $i, j$, $\gamma_{i}$
  and $\gamma_{j}$ satisfies the commutative relation posed in
  \ref{def/comm-relation}. In this pictorial proof, we use the
  color light-blue to indicate $\gamma_{i}$ and the color red to
  indicate $\gamma_{j}$. Recall that with out loss of generality,
  there are three cases to consider

  $$ 1. \quad [i]' < [i]'' < [j]' < [j]''$$
  \begin{center}
    \includegraphics[height=10cm]{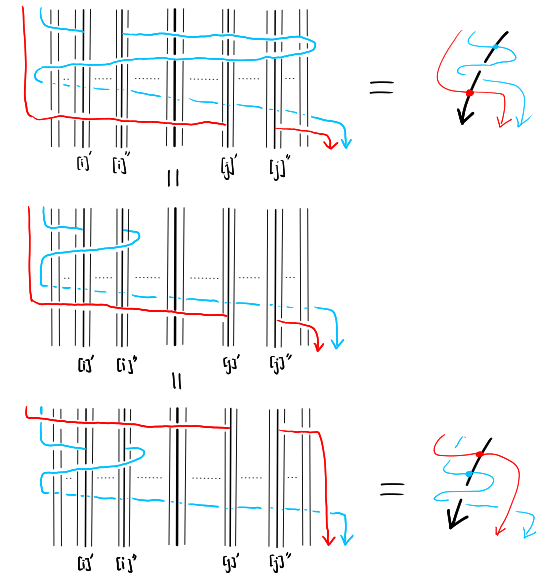}
  \end{center}

  $$ 2. [i]' < [j]' < [i]'' < [j]''$$
  \begin{center}
    \includegraphics[height=7cm]{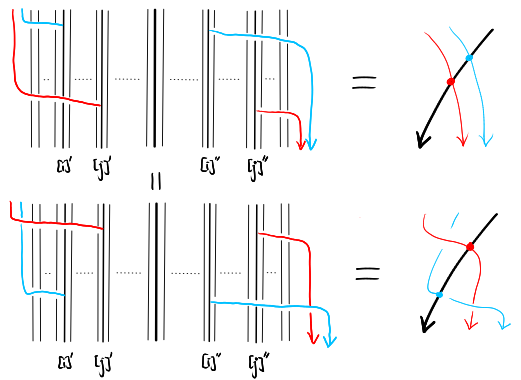}
  \end{center}

  $$3. [i]' < [j]' < [j]'' < [i]''$$
  \begin{center}
    \includegraphics[height=7cm]{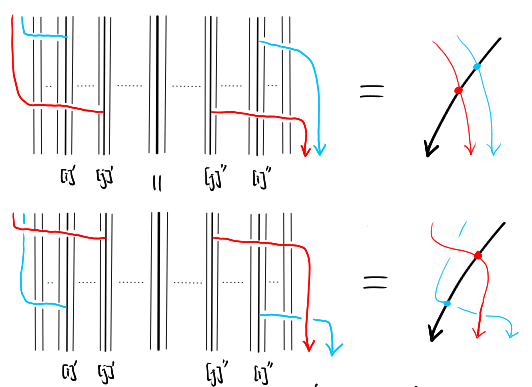}
  \end{center}

\end{Proof}

\begin{lemma}\label{lemma/induced-arrow-respect-half-braidings}
  The induced morphisms in (\ref{def/induced-arrow}) is compatible
  with the sets of half-braidings $\gamma$ and $\beta$ given in
  (\ref{def/induced-half-braiding}).
\end{lemma}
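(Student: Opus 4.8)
\begin{Proof}
The plan is to verify directly the $\sigma$-morphism relation \ref{def/sigma-morphism-relation} for $\overline{f} = \id^{\otimes n} \otimes f \otimes \id^{\otimes n}$, i.e. to show that for every $C$-object $Z$ and every $k$ one has
$$\beta_{[k],Z} \circ (1_{Z} \otimes \overline{f}) = (\overline{f} \otimes 1_{Z}) \circ \gamma_{[k],Z},$$
where $\gamma$ and $\beta$ are the induced half-braiding sets on $X_{\sigma}$ and $Y_{\sigma}$ built in \ref{def/induced-half-braiding}. Since both half-braidings are assembled from the \emph{same} combinatorial template (only the tensor factors $X$ versus $Y$ differ), this is cleanest in graphical calculus, and it simultaneously shows that $\overline{f}$ lands in $Z_{\sigma}(C)$, completing the construction of $I_{\sigma}$ on morphisms.

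First I would recall the anatomy of $\gamma_{[k]}$: a strand labelled $Z$ enters from the left of $X_{\sigma} = \bigoplus X_{1}\otimes\cdots\otimes X_{n}\otimes X\otimes X_{n+1}\otimes\cdots\otimes X_{2n}$ and, by a prescribed sequence of braidings $c, c^{(-1)}$ of $C$, travels past the factors, interacting with the $[k]'$-th and $[k]''$-th factors through the nontrivial intertwiners of the fundamental lemma \ref{lemma/fundamental-lemma-of-Omega} and merely crossing all the others. The morphism $\overline{f}$ is the identity on every tensor factor except the central one, where it equals $f : X \to Y$. The key step is then a local sliding argument carried out from the leftmost strand rightwards: where the $Z$-strand braids past a factor on which $\overline{f}$ is the identity there is nothing to check; where it braids past the central factor, naturality of the braiding of $C$ slides $f$ through the crossing (e.g. $c_{Z,Y}\circ(1_{Z}\otimes f) = (f\otimes 1_{Z})\circ c_{Z,X}$, and similarly with strands reversed or with $c^{(-1)}$); and where the $Z$-strand passes through the $\Omega$-intertwiners, those involve only the summands $X_{[k]'}, X_{[k]''}$ of $\Omega$ and not the central factor, so $\overline{f}$ commutes past them on the nose. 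Composing these moves transports $(1_{Z}\otimes\overline{f})$ from one side of $\gamma_{[k],Z}$ to $(\overline{f}\otimes 1_{Z})$ on the other, which, because the template for $\beta_{[k]}$ on $Y_{\sigma}$ is identical, is exactly the asserted identity.

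The main obstacle I anticipate is bookkeeping rather than anything conceptual: one must be careful that the placement of braidings defining $\gamma_{[k]}$ on $X_{\sigma}$ matches that of $\beta_{[k]}$ on $Y_{\sigma}$, so that sliding $f$ along the central axis produces no spurious twist; here the facts that $f$ never touches the $\Omega$-decorated factors and that the sliding lemma \ref{lemma/sliding-lemma} keeps those $\Omega$-parts rigidly in place make the matching routine. Once this local verification is done for each $k$ and each $Z$, functoriality in $Z$ is automatic from functoriality of the braiding, and preservation of identities and compositions by $(\overline{-})$ follows immediately from the corresponding properties of $\otimes$.
\end{Proof}
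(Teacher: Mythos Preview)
Your proposal is correct and follows essentially the same approach as the paper: the paper's proof consists of a single figure demonstrating graphically that $\overline{f}$, being the identity on every factor of $X_{\sigma}$ except the central one, slides past all braidings by naturality and commutes with the $\Omega$-intertwiners because those never touch the central factor. Your write-up spells this out in words where the paper leaves it to a picture; the only quibble is that the sliding lemma \ref{lemma/sliding-lemma} is not actually needed here (nothing has to be pushed through an $\Omega$-loop), but invoking it does no harm.
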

\begin{Proof}
  Clearly it holds from the following figure.
  \begin{center}
    \includegraphics[height=10cm]{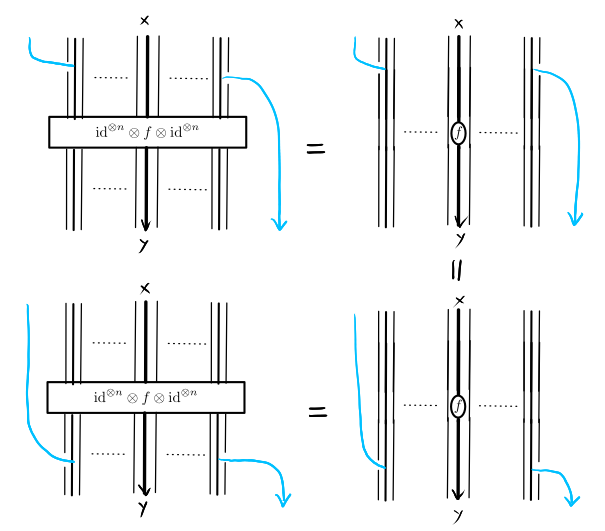}
  \end{center}
\end{Proof}

\begin{lemma}\label{lemma/additive-functor-to-abelian-cat-lifts-to-karoubi}
  Let $A$ be an additive category and $B$ be an abelian category.
  Suppose $$A \xrightarrow{\phi} B$$ is an additive functor. Then
  $\phi$ lifts additively to the Karoubi completion $\Kar(A)$ of
  $A$:
  $$\Kar(A) \xrightarrow{\Phi} B.$$
\end{lemma}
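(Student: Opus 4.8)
The plan is to isolate the single point where the abelianness of $B$ (as opposed to mere additivity) does the work: in an abelian category every idempotent splits. First I would record this in the form I need. For an object $M$ of $B$ and an idempotent $e\in\End_{B}(M)$, the morphisms $e$ and $1_{M}-e$ are complementary idempotents, and since $B$ has kernels and biproducts the canonical map $\ker(e)\oplus\ker(1_{M}-e)\to M$ is an isomorphism (equivalently, apply the canonical factorization \ref{lemma/arrow-factors-thru-coim-and-im} to $e$). Writing $\im(e):=\ker(1_{M}-e)$, this yields an inclusion $\iota^{e}_{M}\colon\im(e)\to M$ and a retraction $\rho^{e}_{M}\colon M\to\im(e)$ with $\iota^{e}_{M}\rho^{e}_{M}=e$ and $\rho^{e}_{M}\iota^{e}_{M}=1_{\im(e)}$. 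I would fix one such splitting for each idempotent of $B$ once and for all.

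With this in hand I would define $\Phi$ on objects by $\Phi(X,p):=\im(\phi(p))$, the splitting object of the idempotent $\phi(p)\in\End_{B}(\phi X)$ (a functor carries the idempotent $p$ to an idempotent). For a $\Kar(A)$-morphism $f\colon(X,p)\to(Y,q)$, i.e.\ an $A$-morphism $f\colon X\to Y$ with $qfp=f$, set
$$\Phi(f):=\rho^{\phi q}_{\phi Y}\circ\phi(f)\circ\iota^{\phi p}_{\phi X}\colon\ \im(\phi p)\longrightarrow\im(\phi q).$$
Functoriality is then a short bookkeeping computation. The identity of $(X,p)$ in $\Kar(A)$ is $p$ itself, and $\Phi(p)=\rho^{\phi p}\phi(p)\iota^{\phi p}=\rho^{\phi p}\iota^{\phi p}\rho^{\phi p}\iota^{\phi p}=1$. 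For composable $f\colon(X,p)\to(Y,q)$ and $g\colon(Y,q)\to(Z,r)$ one first notes that $gf$ lies in $Hom_{\Kar(A)}((X,p),(Z,r))$ (because $p,q,r$ are idempotent and $qfp=f$, $rgq=g$), and then uses $\iota^{\phi q}\rho^{\phi q}=\phi(q)$ together with the identity $gqf=gf$ in $A$ (which holds since $q$ is idempotent and $qfp=f$) to get $\Phi(g)\Phi(f)=\rho^{\phi r}\phi(g)\phi(q)\phi(f)\iota^{\phi p}=\rho^{\phi r}\phi(gqf)\iota^{\phi p}=\rho^{\phi r}\phi(gf)\iota^{\phi p}=\Phi(gf)$.

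It remains to check that $\Phi$ is additive and actually lifts $\phi$. Additivity on hom-groups is immediate: $f\mapsto\rho^{\phi q}\phi(f)\iota^{\phi p}$ is a composite of the additive map $\phi$ with pre- and post-composition by fixed morphisms, hence a group homomorphism. For biproducts, recall $(X,p)\oplus(Y,q)=(X\oplus Y,\,p\oplus q)$ in $\Kar(A)$; since $\phi$ preserves biproducts and the chosen splitting of $\phi(p)\oplus\phi(q)=\phi(p\oplus q)$ may be taken to be the biproduct of the splittings of $\phi(p)$ and $\phi(q)$, we get $\Phi((X,p)\oplus(Y,q))\simeq\Phi(X,p)\oplus\Phi(Y,q)$ compatibly with the structure morphisms, so $\Phi$ is additive. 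Finally, along the canonical embedding $A\hookrightarrow\Kar(A)$, $X\mapsto(X,1_{X})$, we have $\Phi(X,1_{X})=\im(1_{\phi X})\simeq\phi X$ naturally, so $\Phi$ restricts to $\phi$ up to natural isomorphism, as required.

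I do not expect a serious obstacle: the only place the hypothesis ``$B$ abelian'' is genuinely needed is the splitting of idempotents, and everything after that is the routine verification of the functoriality identities, all of which reduce to the idempotency of $p$ and $q$. If one prefers to avoid the explicit construction, the slick alternative is to invoke the universal property of the Karoubi envelope: $\Kar(A)$ is the free completion of $A$ under splitting of idempotents, and an abelian category is idempotent-complete, so $\phi$ extends along $A\hookrightarrow\Kar(A)$ to an additive functor $\Phi\colon\Kar(A)\to B$, unique up to isomorphism; the construction above is simply an explicit model of this extension.
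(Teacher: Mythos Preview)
Your proposal is correct and follows essentially the same route as the paper: define $\Phi(X,p)=\im(\phi(p))$ and let $\Phi(f)$ be the induced map between images. The paper's proof is terse and leaves the functoriality, additivity, and lift checks as ``clearly'', whereas you spell out the idempotent-splitting data $(\iota,\rho)$ and verify each identity; your version is a fully fleshed-out form of the paper's sketch.
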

\begin{Proof}

  Given the assumptions, we must construct $\Phi$ explicitly. Recall
  that a typical object of $\Kar(A)$ is $\bar{X}:=(X,p)$ of
  $X \in \Obj(A)$ and an idempotent $p \in End_{A}(X)$. Define
  $\Phi(\bar{X})$ to be $im_{B}(\phi(p))$. Recall also that a typical
  morphism $$(X,p) \xrightarrow{f} (Y,q) $$ is an $A$-morphism
  $X \xrightarrow{f} Y$ such that $f = qfp$. Hence $\Phi(f)$ induces
  a $B$-morphism $$\im(\phi(p)) \xrightarrow{\phi(f))} \im(\phi(q)).$$ Define
  it to be $\Phi(f)$. So defined map $\Phi$ is clearly an additive
  functor that extends $\phi$.
\end{Proof}

\printbibliography
\end{document}